\theoremstyle{plain}
\newtheorem{thm}{Theorem}
\newtheorem{prop}{Proposition}[section]
\newtheorem{lem}[prop]{Lemma}
\newtheorem{cor}[prop]{Corollary}
\newtheorem{rmk}[prop]{Remark}
\newtheorem{thm*}{Theorem}% This is a fake theorem environment
\newcommand\csname thethm*default\endcsname{\thethm*}
\newcommand{\thmstarnum}[1]{\expandafter\gdef\csname thethm*\endcsname{#1*}}
\newtheorem{thm**}{Theorem}% This is a fake theorem environment
\newcommand\csname thethm**default\endcsname{\thethm**}
\newcommand{\thmstarstarnum}[1]{\expandafter\gdef\csname thethm**\endcsname{#1**}}
\newcommand {\R} {\mathbb{R}} 
 \newcommand {\N} {\mathbb{N}}
\newcommand {\p} {\partial}
\newcommand {\D} {\Delta}
\newcommand {\supp} {\text{supp}}
\newcommand{\inp}[1]{\left\langle #1\right\rangle}
\newcommand{\ol}[1]{\overline{#1}}
\title{Boundary Reconstruction for the Anisotropic Fractional Calder\'on Problem}
\author{Xiaopeng Cheng} 
\address{Max-Planck Institute for Mathematics in the Sciences, Inselstraße 22, 04103 Leipzig, Germany}
\email{xiaopeng.cheng@mis.mpg.de}
\author{Angkana Rüland}
\address{Institute for Applied Mathematics and Hausdorff Center for Mathematics, University of Bonn, Endenicher Allee, 53115 Bonn, Germany}
\email{rueland@uni-bonn.de}
\begin{document}

\begin{abstract}
In this article, we provide a boundary reconstruction result for the anisotropic fractional Calder\'on problem and its associated degenerate elliptic extension into the upper half plane. More precisely, considering the setting from \cite{FGKU21}, we show that the metric on the measurement set can be reconstructed from the source-to-solution data. To this end, we rely on the approach by Brown \cite{B01} in the framework developed in \cite{NT01} (see also \cite{KY02}) after localizing the problem by considering it through an extension perspective.
\end{abstract}

\maketitle

\section{Introduction}
\label{sec:intro}

The purpose of this article is to complement the ``interior'' reconstruction result for the fractional anisotropic Calder\'on problem from \cite{FGKU21} by a matching ``boundary'' reconstruction result. In particular, this implies that in the setting of \cite{FGKU21} \emph{no} apriori knowledge of the metric on the measurement set needs to be assumed.

In order to observe this, let us begin by recalling the set-up of the anisotropic fractional Calder\'on problem from \cite{FGKU21}. Consider $(M,g)$ a closed, compact, connected, smooth Riemannian manifold. Let $(-\D_g)$ denote the Laplace-Beltrami operator on $(M,g)$ with an orthonormal basis of eigenfunctions $(\phi_k)_{k \in \N_0}$ and eigenvalues $(\lambda_k)_{k \in \N_0}$. We assume that the eigenvalues are sorted, i.e., allowing for repetitions we have $0= \lambda_0<\lambda_1 \leq \lambda_2 \leq \lambda_3 \leq \cdots$. We then define the fractional Laplacian spectrally as follows: For $s \in (0,1)$ and $u \in C^{\infty}(M)$
\begin{align*}
(-\D_g)^s u := \sum\limits_{ k\in \N} \lambda_k^{s} (u,\phi_k)_{L^2(M, dV_g)} \phi_k.
\end{align*}
As in \cite{FGKU21}, we consider the following inverse problem associated with the fractional Laplacian:
Let $O\subset M$ be an open, non-empty set, let $f \in C_c^{\infty}(O)$ and define 
\begin{align}
\label{eq:source_to_sol}
L_{s,O}: C_c^{\infty}(O) \rightarrow H^s(O), \ f \mapsto u^f|_{O},
\end{align}
where $u^f$ is a solution to 
\begin{align*}
(-\D_g)^{s} u^f = f \mbox{ in } M,
\end{align*}
with $(u^f ,1)_{L^2(M, dV_g)} = 0$ and thus $u^f = \sum\limits_{ k = 1}^{\infty} \lambda_k^{-s} (f,\phi_k)_{L^2(M, dV_g)} \phi_k$.
In the seminal work \cite{FGKU21} the question whether the operator $L_{s,O}$ uniquely determines the structure of $(M,g)$ was investigated and answered positively (up to the natural gauges), provided that the structure of $(O,g|_{O})$ was apriori known.

\begin{thm}[Theorem 1 in \cite{FGKU21}]
\label{thm:FGKU21}
Let $s\in (0,1)$, assume that $(M_1,g_1)$, $(M_2, g_2)$ are smooth, closed, connected Riemannian manifolds of dimension $n\geq 2$. Let $(O_1,g_1|_{O})\subset (M_1,g_1)$ and $(O_2,g_2|_{O})\subset (M_2,g_2)$ be open, non-empty sets such that $(O_1,g_1|_{O})=(O_2,g_2|_{O})=(O,g)$ for some known metric $g$ on $O$. Let $L_{s,O}^{1}, L_{s,O}^2$ denote the source-to-solution operators associated with the metrics $g_1,g_2$ as in \eqref{eq:source_to_sol} and assume that $L_{s,O}^{1} = L_{s,O}^2$. Then there exists a diffeomorphism $\Phi$ such that $\Phi^* g_2 = g_1$.
\end{thm}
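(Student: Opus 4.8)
The plan is to proceed in two main steps: first, to extract from the a priori known Riemannian structure $(O,g)$ together with the datum $L_{s,O}$ the full boundary spectral data of $(M_i,g_i)$ on $O$; and second, to invoke an inverse spectral rigidity result to pass from data on the open set $O$ to an isometry of the closed manifolds. To set up, observe that $L^i_{s,O}$ has Schwartz kernel $\mathcal{G}^i_s(x,x')=\sum_{k\ge1}(\lambda^i_k)^{-s}\phi^i_k(x)\phi^i_k(x')$ on $O\times O$, with $(\lambda^i_k,\phi^i_k)_{k\in\N_0}$ the eigendata of $-\D_{g_i}$; hence $L^1_{s,O}=L^2_{s,O}$ means $\mathcal{G}^1_s=\mathcal{G}^2_s$ on $O\times O$, and, since the conclusion allows the diffeomorphism gauge, it suffices to produce a diffeomorphism $\Phi\colon M_1\to M_2$ with $\Phi^*g_2=g_1$.

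For Step 1, I would localise the nonlocal operator via the Caffarelli--Silvestre extension: $u^f$ is the trace at $y=0$ of the unique decaying solution $U^f$ of $\di_{g_i+dy^2}\!\big(y^{1-2s}\nabla U^f\big)=0$ on $M_i\times(0,\infty)$, with weighted co-normal derivative $-\lim_{y\to0^+}y^{1-2s}\p_y U^f=c_s(-\D_{g_i})^s u^f=c_s f$. Over $O$ the coefficients of this degenerate elliptic equation depend only on $g|_O$ and hence are known, and on $O\times\{0\}$ we know \emph{both} pieces of Cauchy data of $U^f$, namely $U^f|_{O\times\{0\}}=L_{s,O}f$ and $c_s f$. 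The key step is then the unique continuation property for this weighted operator \emph{from} the hyperplane $\{y=0\}$ --- equivalently, the strong unique continuation of $(-\D_g)^s$ on $O$, proved via Carleman estimates through the extension --- which forces $U^f$ to be determined on the cylinder over $O$. Thus, for each $y>0$ we recover
\begin{align*}
f\longmapsto U^f(\cdot,y)\big|_O=\sum_{k\ge1}(f,\phi^i_k)_{L^2(O)}\,(\lambda^i_k)^{-s}\,\theta_s\!\big(\sqrt{\lambda^i_k}\,y\big)\,\phi^i_k\big|_O ,
\end{align*}
where $\theta_s$ is the explicit Bessel-type Caffarelli--Silvestre profile ($\theta_s(0)=1$, $\theta_s(r)\sim c_s r^{\,s-1/2}e^{-r}$ as $r\to\infty$). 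Letting $y$ vary and inverting the resulting Laplace-type transform in $y$ --- equivalently, peeling off the contributions in order of increasing $\lambda^i_k$, with convergence guaranteed by Weyl's law and standard eigenfunction bounds --- recovers the eigenvalues $\{\lambda^i_k\}$ with multiplicities and, for each $\lambda$, the spectral projector seen from $O$, i.e.\ the map $f\mapsto\sum_{\lambda^i_k=\lambda}(f,\phi^i_k)_{L^2(O)}\phi^i_k|_O$. Equivalently, one recovers the heat kernel $p_i(t,x,x')=\sum_{k\ge1}e^{-\lambda^i_k t}\phi^i_k(x)\phi^i_k(x')$ on $(0,\infty)\times O\times O$, together with the constant mode $\phi^i_0$, which is then fixed by the completeness relation $\sum_{k\ge0}\phi^i_k\phi^i_k=\delta$ restricted to $O\times O$. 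By hypothesis this data coincides for $i=1,2$.

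For Step 2, it remains to show that such boundary spectral data on an open set determine a closed connected Riemannian manifold up to isometry. This is the interior-data incarnation of the Boundary Control method of Belishev--Kurylev: from the spectral data one assembles the wave propagator of $\p_t^2-\D_{g_i}$ together with its traces on $O\times(0,\infty)$ and, using Tataru's sharp unique continuation theorem, a Blagoveshchenskii-type energy identity and geometric-optics (Gaussian beam) constructions, reconstructs $(M_i,g_i)$ canonically up to an isometry that is the identity on $O$. Applying this for $i=1,2$ yields the desired $\Phi$ with $\Phi^*g_2=g_1$.

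The main obstacle is Step 1, and within it the unique continuation input: since $(-\D_g)^s$ is nonlocal, nothing about $(M,g)$ is computable ``locally in $O$'' from $L_{s,O}$ alone, so the entire reduction hinges on propagating the Cauchy data of the extension $U^f$ off the degenerate hyperplane $\{y=0\}$, where the weight $y^{1-2s}$ vanishes or blows up --- a delicate elliptic unique continuation problem --- followed by the quantitative separation of eigenvalues via the $y$-family. The geometric rigidity of Step 2 is by now classical, but itself relies on a hard analytic ingredient, Tataru's unique continuation theorem.
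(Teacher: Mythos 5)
Your proposal is essentially sound, but note first that the present paper does not prove this statement at all: it is Theorem 1 of \cite{FGKU21}, imported as a black box, with only a one-line sketch of its proof in the introduction. Measured against that proof, your argument differs in the first half and coincides in the second. In \cite{FGKU21} one recovers from $L_{s,O}$ the heat kernel restricted to $O\times O$, converts heat data into wave data via the Kannai transmutation formula, and then applies the known interior-data determination result for the wave equation (Boundary Control method plus Tataru's unique continuation) --- exactly the endgame of your Step 2. Your Step 1 instead localizes via the extension \cite{CS07,ST10}, propagates the known Cauchy data on $O\times\{0\}$ into the cylinder over $O$, and reads off eigenvalues and spectral projectors from the large-$y$ decay; this is precisely the ``hidden analyticity''/extension revisit of \cite{R23}, which the paper cites as an alternative route, so your proof is a legitimate variant rather than the original one. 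Two points should be sharpened. First, the unique continuation you need is the Cauchy-data (boundary) version for the degenerate operator $\nabla\cdot y^{1-2s}\nabla$ on the cylinder with known tangential coefficients (vanishing Dirichlet and weighted Neumann data on $O\times\{0\}$ imply vanishing nearby, after which classical interior UCP propagates through each connected component of the cylinder); calling it ``strong unique continuation of $(-\Delta_g)^s$'' is imprecise, and in either formulation it is a nontrivial Carleman-estimate input that must be quoted. Second, your peeling of the Laplace-type expansion recovers \emph{all} eigenvalues with the correct multiplicities only after observing that, by Aronszajn-type unique continuation for $-\Delta_{g_i}$, no nontrivial combination of eigenfunctions vanishes on the open set $O$, so every spectral projector is visible and of full rank on $O\times O$; likewise the recovery of the constant mode amounts to recovering $\vol(M_i)$. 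With these ingredients made explicit, Steps 1--2 yield the interior spectral data and then rest on the same hard hyperbolic result invoked by \cite{FGKU21}; what the Kannai route buys is avoiding the degenerate-elliptic boundary UCP, while your extension route avoids the heat-kernel recovery and transmutation step.
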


This seminal result thus provides a uniqueness result for the anisotropic fractional Calder\'on problem, a question whose local counterpart has remained widely open up to date at $C^{\infty}$ regularity. For results on the classical problem in regularities of the analytic category or for suitable replacements we refer to \cite{LU89,LU01,LTU03,GB09}. The nonlocal geometric uniqueness result of Theorem \ref{thm:FGKU21}, by now, has been extended to various contexts, see, for instance, \cite{QU22,C23,FKU24}.  A key ingredient of the resolution of the problem in the nonlocal setting was a reduction to a hyperbolic problem based on the Kannai transmutation formula after recovering the heat kernel from the measurement data. This result was further revisited with an elliptic extension perspective, highlighting ``hidden analyticity'' in the extended direction, in \cite{R23}.   

In the present article, we return to the question of whether the apriori knowledge of the structure of $(O,g|_{O})$ is necessary for the uniqueness result from Theorem \ref{thm:FGKU21}. To this end, we consider a slightly modified measurement operator: We define 
\begin{align}
\label{eq:source_to_sol_new}
\bar{L}_{s,O}: C_c^{\infty}(O) \rightarrow H^s(O), \ f \mapsto u^f|_{O},
\end{align}
where $u^f$ is a solution to 
\begin{align*}
\sqrt{\det(g)}(-\D_g)^{s} u^f = f \mbox{ in } M,
\end{align*}
with $(u^f ,1)_{L^2(M, dV_g)} = 0$.
We emphasize that this operator is a natural variant of the measurement operator from \cite{FGKU21} since in \cite{FGKU21} it is assumed that $g|_{O}$ is known and since $f \in C_c^{\infty}(O)$. In particular, in the case in which $g|_{O}$ is assumed to be known, the two operators $L_{s,O}$ and $\bar{L}_{s,O}$ carry the same information. Hence, in what follows, with slight abuse of notation, we will also refer to the operator $\bar{L}_{s,O}$ as a source-to-solution operator.

As our first main result, we highlight that the knowledge of the map $\bar{L}_{s,O}$  indeed suffices to uniquely reconstruct the manifold $(M,g)$ \emph{without} the apriori knowledge of the full ``boundary data'' $(O,g|_{O})$. 

\begin{thm}[Source-to-solution measurements and boundary reconstruction]
\label{thm:bdry_reconstr_ext_nonloc_source}
Let $s\in (0,1)$, assume that $(M,g)$ is a closed, connected $ C^4 $-regular Riemannian manifold of dimension $n\geq 2$.  Let $O\subset M$ be a non-empty open set and let $\bar{L}_{s,O}$ denote the source-to-solution operator from \eqref{eq:source_to_sol_new} associated with the metric $g$.
Then, for each $\alpha \in \mathbb{S}^{n-1}$ and for each $x_0 \in O$ there exists a sequence $(N_{k,\alpha})_{k \in \N}\subset \R$ with $N_{k,\alpha} \rightarrow \infty$ as $k \rightarrow \infty$ and a sequence of smooth functions $(f_{N_{k,\alpha}})_{k \in \N}$ with $\int\limits_{M} f_{N_{k,\alpha}} d V_g = 0$ such that
\begin{align*}
\lim\limits_{k \rightarrow \infty} N_{k,\alpha}^{2s + \frac{n}{2}} \langle f_{N_{k,\alpha}}, \frac{1}{\sqrt{\det(g)}}{\bar{L}_{s,O}} (f_{N_{k,\alpha}}) \rangle_{H^{-s}(M,dV_g), H^{s}(M,dV_g)}  = \tilde{c} (\tilde{C}_{\alpha}(g)(x_0))^{-2s}.
\end{align*}
Here 
\begin{align}
\label{eq:zero_order_symbol}
\tilde{ C}_{\alpha}(g)(x_0):=\sqrt{\sum\limits_{j, \ell=1}^{n} \det(g(x_0))^{\frac{1}{2s}} g_{j \ell}^{-1}(x_0) \alpha_j \alpha_{\ell} },
\end{align}
and $\tilde{c} = c_s^{-1}\hat{c}_s^{-2}(c_1 + c_2) $ for $c_1 = \int\limits_{0}^{\infty} t K_s^2(t) dt$, $c_{2}= \int\limits_{0}^{\infty} t K_{1-s}^2 (t) dt$, $c_s= - \frac{2^{2s-1}\Gamma(s)}{\Gamma(1-s)} \neq 0$ and $\hat{c}_s = 2^{-s} \Gamma(1-s)$. The function $\Gamma(\cdot)$ denotes the Gamma-function and $ K_s(\cdot), K_{1-s}(\cdot) $ denote the modified Bessel functions of the second kind.
The functions $f_{N_{k,\alpha}}$ are  independent of $g$ and their supports are localized in $B_{1/N_{k,\alpha}}(x_0) \cap M$. 
\end{thm}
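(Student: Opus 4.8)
The plan is to reduce the nonlocal source-to-solution pairing to a local, leading-order computation via the Caffarelli--Silvestre extension, and then to exploit a Poisson-type concentration argument à la Brown. First I would pass to the degenerate elliptic extension: writing $U$ for the extension of $u^f$ into $M\times(0,\infty)$ solving $\operatorname{div}(y^{1-2s}\nabla U)=0$ with $U|_{y=0}=u^f$, the Caffarelli--Silvestre identification gives $(-\Delta_g)^s u^f = -c_s \lim_{y\to 0^+} y^{1-2s}\partial_y U$ up to the normalizing constant. The bilinear form $\langle f, (\det g)^{-1/2}\bar L_{s,O}(f)\rangle$ then becomes, after integrating by parts, the weighted Dirichlet energy $\hat c_s^{-2}\int_{M\times(0,\infty)} y^{1-2s}|\nabla U|_g^2\, dy\, dV_g$ (this is where the two constants $c_1,c_2$ attached to $K_s$ and $K_{1-s}$ enter, from the radial profile of the extension and the two pieces $y^{1-2s}|\partial_y U|^2$ and $y^{1-2s}|\nabla_x U|_g^2$). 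So the quantity to analyze is a localized weighted energy of the extension of a highly concentrated source.

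Next I would choose the test functions. Fix $x_0\in O$ and $\alpha\in\mathbb{S}^{n-1}$, work in $g$-normal (or boundary-normal in the extended variable) coordinates centred at $x_0$, and take $f_{N}(x) = N^{n/2}\chi(N(x-x_0))\, e^{iN\langle x-x_0,\alpha\rangle}$, corrected by subtracting its mean so that $\int_M f_N\, dV_g=0$; here $\chi$ is a fixed smooth bump, $g$-independent, supported in the unit ball, which forces $\operatorname{supp} f_N\subset B_{1/N}(x_0)$. The point of the $e^{iN\langle x-x_0,\alpha\rangle}$ factor is that, after rescaling $x = x_0 + z/N$, $y = w/N$, the extension problem converges (as $N\to\infty$) to the constant-coefficient extension problem on $\mathbb{R}^n_+\times(0,\infty)$ with frequency $\alpha$ and coefficients frozen at $x_0$, i.e. governed by the constant-coefficient fractional Laplacian with symbol $\big(\sum g_{j\ell}^{-1}(x_0)\alpha_j\alpha_\ell\big)^s$, weighted by the density $\sqrt{\det g(x_0)}$ coming from $dV_g$. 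The $N^{2s+n/2}$ prefactor is exactly the scaling that makes this blow-up limit finite and nonzero.

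Then I would carry out the blow-up / $\Gamma$-convergence-type argument rigorously: show that the rescaled extensions $U_N(z,w)$ converge (in the weighted $H^1$ norm on compact sets, using the $C^4$-regularity of $g$ to control the coefficient errors $|g_{j\ell}(x_0+z/N)-g_{j\ell}(x_0)| = O(1/N)$ and likewise for $\sqrt{\det g}$) to the explicit half-space solution, whose weighted energy is computed via separation of variables: the $w$-profile is a Bessel-type function, giving the modified Bessel function $K_s$ for the tangential part and $K_{1-s}$ for the normal-derivative part, hence the constants $c_1=\int_0^\infty tK_s^2(t)\,dt$ and $c_2=\int_0^\infty tK_{1-s}^2(t)\,dt$. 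Collecting the explicit Fourier-side computation of $\int_{\mathbb{R}^n}|\widehat{\chi}(\xi)|^2$-type integrals against the symbol yields the value $\tilde c\,(\tilde C_\alpha(g)(x_0))^{-2s}$, with the factor $\det(g(x_0))^{1/(2s)}$ appearing inside $\tilde C_\alpha$ precisely because the volume density $\sqrt{\det g}$ multiplies the equation in \eqref{eq:source_to_sol_new} and must be absorbed into the symbol.

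The main obstacle I expect is the uniformity of the blow-up: one must show that the contribution to the weighted energy from the region away from $(x_0,0)$ — where $U_N$ is not well approximated by the frozen-coefficient solution and where the nonlocal tails of $u^{f_N}$ live — is genuinely lower order after multiplication by $N^{2s+n/2}$. This requires quantitative decay estimates for the extension (Caccioppoli/De Giorgi--Nash--Moser estimates for the degenerate weight $y^{1-2s}$, uniform in $N$), together with care that the mean-subtraction in $f_N$ and the projection onto $(u^f,1)_{L^2}=0$ contribute only negligibly. A secondary technical point is justifying the interchange of the extension and the rescaling on a manifold (as opposed to $\mathbb{R}^n$), which is handled by working in a fixed coordinate chart around $x_0$ and cutting off, absorbing the cutoff errors into the same lower-order bucket.
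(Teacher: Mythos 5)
Your overall strategy (pass to the Caffarelli--Silvestre/Stinga--Torrea extension, rewrite the pairing as a weighted Dirichlet energy, concentrate oscillatory data at $x_0$ and compute a frozen-coefficient limit) is indeed the paper's strategy, but there is a genuine gap in the core computation: your test functions lack the two-scale structure the argument requires. You take $f_N(x)=N^{n/2}\chi(N(x-x_0))e^{iN\alpha\cdot(x-x_0)}$, i.e.\ bump width $1/N$ equal to the wavelength of the oscillation. Then $|\hat f_N(\xi)|^2=N^{-n}|\hat\chi((\xi-N\alpha)/N)|^2$ is spread over a ball of radius $\sim N$ around $N\alpha$, so after freezing coefficients the pairing is a $|\hat\chi|^2$-weighted average of the symbol over a unit-scale set of directions, e.g.\ in the flat model
\begin{align*}
\langle f_N,(-\Delta)^{-s}f_N\rangle \;\sim\; N^{-2s}\int_{\R^n}|\hat\chi(\eta)|^2\bigl((\alpha+\eta)\cdot g^{-1}(x_0)(\alpha+\eta)\bigr)^{-s}\,d\eta ,
\end{align*}
which is neither of the form $\tilde c\,(\tilde C_{\alpha}(g)(x_0))^{-2s}$ with a $g$-independent constant, nor compatible with the prefactor $N^{2s+\frac n2}$: with your $L^2$-normalization the rescaled quantity in the theorem would diverge like $N^{n/2}$. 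The paper's (Brown/Nakamura--Tanuma) construction uses the bump $\eta(\sqrt N x)$ at tangential scale $N^{-1/2}$ with oscillation $e^{iN\alpha\cdot x}$, so the relative frequency spread is $N^{-1/2}\to 0$; the normal profile is then the single Bessel mode $(C_0z_{n+1})^sK_s(C_0z_{n+1})$ with $C_0=\sqrt{\alpha\cdot\gamma(x_0)\alpha}$, which is exactly what produces $c_1=\int_0^\infty tK_s^2$, $c_2=\int_0^\infty tK_{1-s}^2$ and the scaling $N^{2s+\frac n2}$, and the WKB hierarchy in half-integer powers of $N$ (using $C^4$ regularity) is what makes the remainder $o(N^{-2s-\frac n2})$.

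A second gap concerns the mean-zero constraint. ``Subtracting its mean'' from $f_N$ subtracts a global constant on the closed manifold, which destroys membership in $C_c^\infty(O)$ and the claimed support localization near $x_0$; any localized correction instead would have to be shown not to pollute the leading asymptotics. This is precisely why the theorem is stated along a subsequence $N_{k,\alpha}$: the paper chooses $\eta$ as a mollified cube indicator and selects $N_{k,\alpha}$ with $\hat\eta(\sqrt{N_{k,\alpha}}\,\alpha)=0$, so that the compatibility condition holds exactly without modifying the data. Your proposal does not account for this subsequence, and it also misattributes the constants: $\hat c_s^{-2}$ arises from renormalizing the approximate solution so that its generalized Neumann trace equals $f_N$ (not from the integration by parts), while $c_s^{-1}$ comes from the extension characterization of $\bar L_{s,O}$. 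The soft blow-up/energy-decay plan for the far-field errors could plausibly be made rigorous, but as written the leading-order computation itself does not yield the claimed limit.
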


Let us comment on this result. It shows that there exists a family of boundary conditions which allows us to reconstruct $\det(g(x_0))^{\frac{1}{2s}} \alpha \cdot g^{-1}(x_0) \alpha$ for all $x_0 \in O$ and $\alpha \in \mathbb{S}^{n-1}$ and thus to recover the metric $g$ in $ O $. Indeed, the bilinear form in $\tilde{C}_{\alpha}(g)(x_0)$ allows us to uniquely recover it from these data. 

\begin{cor}
\label{cor:metric}
Let $s\in (0,1)$.
Let $(M_j,g_j)$ for $j \in \{1,2\}$ be closed, connected $C^4$-regular Riemannian manifolds of dimension $n\geq 2$. 
Let $(O_1,g_1)\subset (M_1,g_1)$ and $(O_2,g_2)\subset (M_2,g_2)$ be open, non-empty sets such that $O_1=O_2=:O$. Let $\bar{L}_{s,O}^{1}, \bar{L}_{s,O}^2$ denote the source-to-solution operators from \eqref{eq:source_to_sol_new} associated with the metrics $g_1,g_2$.
Assume that 
\begin{align*}
\bar{L}_{s,O}^1= \bar{ L}_{s,O}^2.
\end{align*}
 Then, $g_1|_O = g_2|_O$.
\end{cor}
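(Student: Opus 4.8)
The plan is to deduce Corollary~\ref{cor:metric} directly from Theorem~\ref{thm:bdry_reconstr_ext_nonloc_source} together with the elementary linear-algebraic observation recorded after the theorem. Fix $x_0 \in O$. Since $\bar{L}_{s,O}^1 = \bar{L}_{s,O}^2$, the scalar quantities appearing on the left-hand side of the limit in Theorem~\ref{thm:bdry_reconstr_ext_nonloc_source} agree for the two metrics: indeed, the functions $f_{N_{k,\alpha}}$ are, by the last sentence of the theorem, \emph{independent of $g$}, and the pairing $\langle f_{N_{k,\alpha}}, \frac{1}{\sqrt{\det(g)}} \bar{L}_{s,O}^j(f_{N_{k,\alpha}}) \rangle$ is determined by $\bar{L}_{s,O}^j$ alone once we note that this $H^{-s}$--$H^s$ pairing is the fixed $L^2(M,dV_g)$ pairing, which by construction of $\bar{L}_{s,O}$ (whose definition builds in the factor $\sqrt{\det(g)}$) is computable from the operator itself. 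Hence the two limits coincide, and since the prefactor $\tilde c$ is a universal constant depending only on $s$, we conclude for every $\alpha \in \mathbb{S}^{n-1}$ and every $x_0 \in O$ that $\tilde C_\alpha(g_1)(x_0) = \tilde C_\alpha(g_2)(x_0)$, i.e.
\begin{align*}
\sum_{j,\ell=1}^n \det(g_1(x_0))^{\frac{1}{2s}} (g_1)^{-1}_{j\ell}(x_0)\, \alpha_j \alpha_\ell = \sum_{j,\ell=1}^n \det(g_2(x_0))^{\frac{1}{2s}} (g_2)^{-1}_{j\ell}(x_0)\, \alpha_j \alpha_\ell.
\end{align*}

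Next I would pass from the quadratic form to the matrix. The identity above says that the symmetric bilinear forms $Q_j := \det(g_j(x_0))^{\frac{1}{2s}} g_j^{-1}(x_0)$ agree on the diagonal $\alpha \mapsto Q_j(\alpha,\alpha)$ for all $\alpha \in \mathbb{S}^{n-1}$, hence (by homogeneity) for all $\alpha \in \R^n$; polarization then gives $Q_1 = Q_2$ as matrices, i.e. $\det(g_1(x_0))^{\frac{1}{2s}} g_1^{-1}(x_0) = \det(g_2(x_0))^{\frac{1}{2s}} g_2^{-1}(x_0)$. Taking determinants of both sides and using $\det(g_j^{-1}) = \det(g_j)^{-1}$ yields $\det(g_1(x_0))^{\frac{n}{2s} - 1} = \det(g_2(x_0))^{\frac{n}{2s} - 1}$. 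Since $s \in (0,1)$ and $n \geq 2$, the exponent $\frac{n}{2s} - 1 = \frac{n - 2s}{2s}$ is strictly positive (as $n \geq 2 > 2s$), so $\det(g_1(x_0)) = \det(g_2(x_0))$. Substituting this common value back into $Q_1 = Q_2$ and cancelling the (nonzero, equal) scalar factor $\det(g_j(x_0))^{\frac{1}{2s}}$ gives $g_1^{-1}(x_0) = g_2^{-1}(x_0)$, hence $g_1(x_0) = g_2(x_0)$. As $x_0 \in O$ was arbitrary, $g_1|_O = g_2|_O$.

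The only point requiring a little care — and the step I would flag as the main (albeit minor) obstacle — is the bookkeeping in the first paragraph: one must check that the left-hand side of the limit in Theorem~\ref{thm:bdry_reconstr_ext_nonloc_source} is genuinely a function of $\bar{L}_{s,O}^j$ and nothing else. The pairing is written against $dV_g$ and involves the weight $\frac{1}{\sqrt{\det(g)}}$; since $\bar{L}_{s,O}$ is defined via $\sqrt{\det(g)}(-\Delta_g)^s u^f = f$, the combination $\frac{1}{\sqrt{\det(g)}}\bar{L}_{s,O}(f) = u^f|_O$ is the solution itself, and the $H^{-s}$--$H^s$ duality pairing $\langle f, u^f\rangle$ over $O$ (with $f$ supported in $O$) is exactly the datum encoded by the operator, independently of any further knowledge of $g$. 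Once this is granted, the equality of the two limits is immediate and the rest is the routine linear algebra above; in particular the strict positivity of $\frac{n-2s}{2s}$, which is what makes the determinant — and hence the metric — uniquely recoverable, uses only $n \geq 2$ and $s < 1$.
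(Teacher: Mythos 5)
Your proposal is correct and takes essentially the same route as the paper: the paper's own proof of Corollary \ref{cor:metric} consists precisely of the observation that all quantities in $\langle f_N, \frac{1}{\sqrt{\det(g)}}\bar{L}_{s,O}(f_N)\rangle_{H^{-s},H^{s}}$ are available from the measurement data, and your passage from the recovered quadratic form $\det(g(x_0))^{\frac{1}{2s}}g^{-1}(x_0)$ to $g(x_0)$ (polarization, then the determinant trick using $\frac{n}{2s}-1\neq 0$) is exactly the linear algebra the paper records in the proof of Corollary \ref{cor:metric2}.

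One bookkeeping slip at the step you yourself flag as delicate: by \eqref{eq:source_to_sol_new} one already has $\bar{L}_{s,O}(f)=u^f|_O$, so $\frac{1}{\sqrt{\det(g)}}\bar{L}_{s,O}(f)=\frac{1}{\sqrt{\det(g)}}u^f|_O$ is \emph{not} the solution itself; and the unweighted pairing $\langle f,u^f\rangle_{H^{-s}(M,dV_g),H^{s}(M,dV_g)}$ is \emph{not} metric-independent, since it carries the factor $\sqrt{\det(g)}$ coming from $dV_g$. The correct accounting is that the weight $\frac{1}{\sqrt{\det(g)}}$ cancels the $\sqrt{\det(g)}$ in the volume form, so in the fixed coordinates on $O$ the measured quantity equals $\int_O f_N\,\bar{L}_{s,O}(f_N)\,dx$ with respect to Lebesgue measure, which is determined by the operator and the functions $f_{N_{k,\alpha}}$ alone (these, and the sequence $N_{k,\alpha}$ itself, are independent of $g$, cf.\ the remark following Lemma \ref{lem:cut-off}). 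With this one-line correction your argument is complete and agrees with the paper's.
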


We view Theorem \ref{thm:bdry_reconstr_ext_nonloc_source} as a ``boundary reconstruction result''. The interpretation of this result as a ``genuine'' boundary reconstruction result will be given through an extension perspective in the following sections. Moreover, as in \cite{B01} (or \cite{NT01}) on which we build, this result is constructive. Given the data $\bar{L}_{s,O}$ and $O$, we can thus recover $(O, g|_O)$ uniquely.
As a consequence of Theorem \ref{thm:bdry_reconstr_ext_nonloc_source}, it is possible to strengthen the result from \cite{FGKU21} even further and to avoid apriori knowledge on the metric, see Theorem \ref{thm:FGKU_nonlocal} below.

A similar result as in Theorem \ref{thm:bdry_reconstr_ext_nonloc_source} also holds for the setting with solution-to-source measurement data. More precisely, we consider the operator
\begin{align}
\label{eq:sol_to_source}
\tilde{L}_{s,O}: C_c^{\infty}(O) \to H^{-s}(O), \ u \mapsto f^u|_{O},
\end{align}
where $f^u$ is obtained as the inhomogeneity associated with the function $u$:
\begin{align*}
\sqrt{\det(g)}(-\D_g)^s u = f^u \mbox{ in } M.
\end{align*}
With this notation fixed, we obtain the following analogue of Theorem \ref{thm:bdry_reconstr_ext_nonloc_source}.

\begin{thm}[Solution-to-source measurements and boundary reconstruction]
\label{thm:bdry_reconstr_ext_nonloc_sol}
Let $s\in (0,1)$ and assume that $(M,g)$ is a closed, connected $C^2$-regular Riemannian manifold of dimension $n\geq 2$. Let $O\subset M$ be a non-empty open set and let $\tilde{L}_{s,O}$ denote the solution-to-source operator from \eqref{eq:sol_to_source} associated with the metric $g$. Let $\alpha \in \mathbb{S}^{n-1}$. For each $x_0 \in M$ there exists a sequence of smooth functions $(u_N)_{N\in\N}$ such that
\begin{align*}
\lim\limits_{N \rightarrow \infty} N^{-2s + \frac{n}{2}} \langle u_N, \frac{1}{\sqrt{\det(g)}} \tilde{L}_{s,O} (u_N) \rangle_{H^{-s}(M, dV_g),H^s(M, dV_g)}  = \tilde{c} (\tilde{C}_{\alpha}(g)(x_0))^{2s},
\end{align*}
where 
\begin{align}
\label{eq:zero_order_symbol1}
\tilde{C}_{\alpha}(g)(x_0) := \sqrt{\sum\limits_{j, \ell=1}^{n} \det(g(x_0))^{\frac{1}{2s}} g_{j \ell}^{-1}(x_0) \alpha_j \alpha_{\ell} },
\end{align}
and $\tilde{c} = c_s(c_1 + c_2)$ for  $c_1 = \int\limits_{0}^{\infty} t K_s^2(t) dt$, $c_{2}= \int\limits_{0}^{\infty} t K_{1-s}^2 (t) dt$ and $c_s= - \frac{2^{2s-1}\Gamma(s)}{\Gamma(1-s)} \neq 0$ where $\Gamma(\cdot)$ denotes the Gamma-function and $ K_s(\cdot), K_{1-s}(\cdot) $ denote the modified Bessel functions of the second kind.
The functions $u_N$ are  independent of $g$ and their supports are localized in $B_{1/N}(x_0)\cap M$. Moreover, the rescaled functions $(N^{\frac{n}{4}-s} u_N)_{N \in \N}$ satisfy a uniform $H^{s}(M)$ bound as $N \rightarrow \infty$. 
\end{thm}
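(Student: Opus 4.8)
The plan is to establish this as the (technically easier, $C^2$) counterpart of Theorem~\ref{thm:bdry_reconstr_ext_nonloc_source}: since here $\tilde L_{s,O}$ encodes $(-\Delta_g)^s$ itself rather than its inverse, and since the probing functions $u_N$ are supported in $B_{1/N}(x_0)\subset O$, the pairing in the statement reduces, once the weight $\sqrt{\det g}$ cancels against the prefactor, to the \emph{nonlocal} quadratic form $\langle u_N,(-\Delta_g)^s u_N\rangle_{L^2(M,dV_g)}$. The first step is to localise this quadratic form via the Caffarelli--Silvestre extension. Writing $U_N$ for the solution on $M\times(0,\infty)$ of the degenerate elliptic equation $\di\big(y^{1-2s}\sqrt{\det g}\,g^{-1}\nabla_x U_N\big)+\p_y\big(y^{1-2s}\sqrt{\det g}\,\p_y U_N\big)=0$ with trace $U_N|_{y=0}=u_N$ --- this divergence form with Lebesgue measure in $y$ being precisely why the measurement operator was normalised by $\sqrt{\det g}$ --- the standard energy identity turns the pairing into the \emph{local} weighted Dirichlet energy $\langle u_N,(-\Delta_g)^s u_N\rangle = d_s^{-1}\int_{M\times(0,\infty)} y^{1-2s}\big(|\nabla_x U_N|_g^2+|\p_y U_N|^2\big)\sqrt{\det g}\,dx\,dy$, with $d_s>0$ the usual extension constant (so $c_s=-d_s^{-1}$). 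This is the ``extension perspective'' announced in the abstract, through which the statement acquires the meaning of a genuine boundary reconstruction result.

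The second step is the wave-packet construction. Fixing $x_0$ and a coordinate chart centred there, I would take $u_N$ to be a fixed, $g$-independent family of the form $u_N(x)=a_N\,\psi\!\big(N(x-x_0)\big)\,e^{iM_N\alpha\cdot(x-x_0)}$ (or its real part), with $\psi\in C_c^\infty(B_1)$, tangential frequency $M_N\to\infty$ chosen fast enough that $u_N$ is a genuine wave packet (e.g.\ $M_N=N^2$), amplitude $a_N$ fixed by the required $H^s$ normalisation, and support contained in $B_{1/N}(x_0)\cap M$. On the shrinking support of $u_N$ the extension $U_N$ is governed, to leading order, by the frozen-coefficient half-space model with metric $g(x_0)$, for which the solution separates as $U_N(x,y)\approx u_N(x)\,\theta_s\big(M_N|\alpha|_{g(x_0)}\,y\big)$, where $|\alpha|^2_{g(x_0)}=\sum_{j,\ell}g^{-1}_{j\ell}(x_0)\alpha_j\alpha_\ell$ and $\theta_s(\rho)=\tfrac{2^{1-s}}{\Gamma(s)}\rho^{s}K_s(\rho)$ is the bounded solution of $(\rho^{1-2s}\theta_s')'=\rho^{1-2s}\theta_s$ normalised by $\theta_s(0)=1$. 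I would then verify that this explicit profile is an approximate extension of the \emph{given} $u_N$ --- so that the true and model energies agree to leading order in $N$ --- exploiting the $C^2$ regularity of $g$ to control the coefficient-freezing error quantitatively (since $|x-x_0|\lesssim 1/N$ on the support) and the variational characterisation of $U_N$ as the weighted-energy minimiser subject to the trace constraint (which yields a one-sided bound upon testing with the quasimode and a matching reverse bound from the equation).

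The third step is the energy evaluation. Inserting the separated profile, freezing $\sqrt{\det g}$ and $g^{-1}$ at $x_0$, and rescaling $\rho=M_N|\alpha|_{g(x_0)}y$: the tangential-gradient term produces $\int_0^\infty\rho^{1-2s}|\theta_s(\rho)|^2\,d\rho=\big(\tfrac{2^{1-s}}{\Gamma(s)}\big)^2\!\int_0^\infty\rho\,K_s^2(\rho)\,d\rho$, proportional to $c_1$, and --- using the Bessel identity $\tfrac{d}{d\rho}\big(\rho^{s}K_s(\rho)\big)=-\rho^{s}K_{1-s}(\rho)$ --- the normal-gradient term produces $\int_0^\infty\rho^{1-2s}|\theta_s'(\rho)|^2\,d\rho=\big(\tfrac{2^{1-s}}{\Gamma(s)}\big)^2\!\int_0^\infty\rho\,K_{1-s}^2(\rho)\,d\rho$, proportional to $c_2$; the spatial integration contributes a factor $\sqrt{\det g(x_0)}$, and the $N$-powers combine into $N^{2s-n/2}$. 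Collecting $|\alpha|^{2s}_{g(x_0)}\sqrt{\det g(x_0)}=(\tilde C_\alpha(g)(x_0))^{2s}$ together with the extension normalisation $d_s$ (equivalently $c_s$), the Bessel normalisation $\tfrac{2^{1-s}}{\Gamma(s)}$ and the two integrals $c_1,c_2$, one arrives at $\lim_{N\to\infty}N^{-2s+n/2}\langle u_N,(-\Delta_g)^s u_N\rangle=\tilde c\,(\tilde C_\alpha(g)(x_0))^{2s}$ with $\tilde c=c_s(c_1+c_2)$. The uniform $H^s(M)$ bound for $N^{n/4-s}u_N$ is read off from the same computation, since by the extension identity $[u_N]^2_{H^s}$ is comparable to the weighted energy and thus of size $N^{2s-n/2}$; the $g$-independence of $u_N$ and the localisation of its support are built into the construction, and Corollary~\ref{cor:metric} then follows by varying $\alpha\in\mathbb{S}^{n-1}$ and inverting the bilinear form.

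The main obstacle is the error analysis in the degenerate elliptic setting: one must control the difference, \emph{measured in the weighted energy $\int y^{1-2s}|\nabla U|^2$}, between the true extension of the given wave packet on the closed manifold and the explicit frozen-coefficient Bessel quasimode, despite (i) the degeneracy/singularity of the $A_2$-weight $y^{1-2s}$ at $y=0$, which rules out naive elliptic regularity and forces the use of weighted Sobolev spaces and Caccioppoli-type estimates tailored to the weight, and (ii) the merely $C^2$ regularity of $g$, so that the freezing error must be tracked with its $N$-dependence rather than absorbed. This is exactly where Brown's boundary-identification scheme \cite{B01} --- in the framework of \cite{NT01} --- is transplanted to the weighted operator; since no inversion of $(-\Delta_g)^s$ is required here, the present statement is the direct analogue of, and a byproduct of the estimates carried out for, Theorem~\ref{thm:bdry_reconstr_ext_nonloc_source}, which also accounts for the weaker ($C^2$ rather than $C^4$) regularity requirement.
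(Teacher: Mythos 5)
Your proposal is correct in its overall strategy and shares the paper's starting point (the extension/energy identity from \cite{CS07,ST10} reducing the pairing to a weighted Dirichlet energy in a single coordinate patch), but it takes a genuinely different route at the technical core. The paper uses a \emph{balanced} wave packet (envelope scale $N^{-1/2}$, phase $e^{iN\alpha\cdot x}$) and then must run the full Brown/Nakamura--Tanuma machinery: a hierarchy of corrector profiles $v_r$ built from inhomogeneous Bessel ODEs (Lemma \ref{lem:normal}, Proposition \ref{prop:DtN_approx}), precisely because at that scaling the naive frozen-coefficient separated solution is \emph{not} accurate enough — the first corrector contributes at relative order $N^{-1/2}$ and without it the Hardy-type estimate of the error term $A$ loses a factor $N^{1/4}$. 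You instead take a strongly scale-separated packet (envelope $N^{-1}$, phase frequency $M_N=N^2$) and a single frozen-coefficient quasimode $u_N(x)\theta_s(M_N|\alpha|_{g(x_0)}y)$, with the two-sided variational comparison and a Hardy/dual-norm bound on the residual replacing the corrector hierarchy. This does work: with $M_N=N^2$ the residual (coefficient-freezing plus envelope cross terms) has amplitude $O(N^3)$ against a leading energy of size $N^{2s-n/2}$, and the dual-norm estimate gives an energy error of relative size $O(N^{-2})$, so the leading asymptotics survive with only $C^1$--$C^2$ control of $g$. What each approach buys: the paper's hierarchy yields the symbol expansion to arbitrary order (needed later for the Neumann/source-to-solution case with its renormalization, whence $C^4$), while your scaling buys a much shorter argument for the leading symbol only, which is all this theorem needs.

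Two caveats you should repair when writing this up. First, the exact constant $\tilde c=c_s(c_1+c_2)$ is \emph{not} pinned down by "amplitude $a_N$ fixed by the required $H^s$ normalisation": the $H^s$ statement is only a uniform bound, and with your normalisation $\theta_s(0)=1$ the energy computation produces the extra factor $\bigl(2^{1-s}/\Gamma(s)\bigr)^2\|\psi\|_{L^2}^2$; you must choose explicitly $a_N=2^{s-1}\Gamma(s)\,\|\psi\|_{L^2}^{-1}N^{\frac{n}{4}-s}$ (this is exactly the role of the amplitude $\bar c_s$ and the normalisation $\int\eta^2=1$ in the paper) for the stated constant to emerge; the sign bookkeeping between $c_s<0$ and the positive energy should also be fixed once and consistently. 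Second, your closing claim that this theorem is "a byproduct of the estimates carried out for Theorem \ref{thm:bdry_reconstr_ext_nonloc_source}" reverses the paper's logical order — there the Dirichlet (solution-to-source) case is the simpler foundational one and the Neumann case is derived from it with an additional renormalization — and the $C^2$ versus $C^4$ distinction comes from the order of correctors needed in the respective error estimates, not from the absence of an inversion; this does not affect your argument, but the error analysis you defer is the actual content and must be executed with the $N$-dependence tracked, since its success hinges on the choice $M_N\gg N$.
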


Again, as a direct corollary one obtains the recovery of the metric $g$:

\begin{cor}
\label{cor:metric2}
Let $s\in (0,1)$.
Let $(M_j,g_j)$ for $j \in \{1,2\}$ be closed, connected $C^2$-regular Riemannian manifolds of dimension $n\geq 2$. 
Let $(O_1,g_1)\subset (M_1,g_1)$ and $(O_2,g_2)\subset (M_2,g_2)$ be non-empty open sets such that $O_1=O_2=:O$. Let $\tilde{L}_{s,O}^{1}, \tilde{L}_{s,O}^2$ denote the source-to-solution operators from \eqref{eq:sol_to_source} associated with the metrics $g_1,g_2$ and suppose that $ \tilde{L}_{s,O}^1 =  \tilde{L}_{s,O}^2$. Then, $g_1|_O = g_2|_O$.
\end{cor}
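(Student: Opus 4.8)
The plan is to read off the pointwise quantities $\tilde{C}_\alpha(g_j)(x_0)$ from the solution-to-source data by invoking Theorem \ref{thm:bdry_reconstr_ext_nonloc_sol}, after observing that the limiting bilinear pairing appearing there is in fact completely determined by the operator $\tilde{L}_{s,O}^j$ together with the (metric-independent) test functions $u_N$; a short pointwise linear-algebra argument then upgrades the resulting equality to $g_1|_O = g_2|_O$.

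Concretely, I would fix $x_0 \in O$ and $\alpha \in \mathbb{S}^{n-1}$, choose one coordinate chart $(U,\kappa)$ of $O$ with $x_0 \in U$, and take the sequence $(u_N)_{N \in \N}$ provided by Theorem \ref{thm:bdry_reconstr_ext_nonloc_sol}. Since by that theorem these functions do not depend on the metric and satisfy $\supp u_N \subset B_{1/N}(x_0)$, for $N$ large enough $u_N \in C_c^\infty(U) \subset C_c^\infty(O)$ and the very same sequence may be inserted into both $\tilde{L}_{s,O}^1$ and $\tilde{L}_{s,O}^2$. The key observation is that, writing $f_j^{u_N} = \sqrt{\det(g_j)}(-\Delta_{g_j})^s u_N$ so that $\tilde{L}_{s,O}^j(u_N) = f_j^{u_N}|_O$, and using $dV_{g_j} = \sqrt{\det(g_j)}\,dx$ in the chart $U$, the density $\sqrt{\det(g_j)}$ cancels:
\[
\langle u_N, \tfrac{1}{\sqrt{\det(g_j)}}\tilde{L}_{s,O}^j(u_N)\rangle_{H^{-s}(M,dV_{g_j}),H^s(M,dV_{g_j})} = \int_M u_N\,(-\Delta_{g_j})^s u_N\, dV_{g_j} = \int_U u_N\, \tilde{L}_{s,O}^j(u_N)\, dx .
\]
Thus this number is the same for $j = 1$ and $j = 2$ whenever $\tilde{L}_{s,O}^1 = \tilde{L}_{s,O}^2$. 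Multiplying by $N^{-2s + n/2}$ and letting $N \to \infty$, Theorem \ref{thm:bdry_reconstr_ext_nonloc_sol} gives $\tilde{c}\,(\tilde{C}_\alpha(g_1)(x_0))^{2s} = \tilde{c}\,(\tilde{C}_\alpha(g_2)(x_0))^{2s}$; since $\tilde{c} \neq 0$ and $t \mapsto t^{2s}$ is injective on $(0,\infty)$, this forces $\tilde{C}_\alpha(g_1)(x_0) = \tilde{C}_\alpha(g_2)(x_0)$ for every $x_0 \in U$ and every $\alpha \in \mathbb{S}^{n-1}$.

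It then remains to recover the metric pointwise. By \eqref{eq:zero_order_symbol1} and polarization, equality of the quadratic forms $\alpha \mapsto \det(g_j(x_0))^{1/(2s)}\sum_{k,\ell} g_{j,k\ell}^{-1}(x_0)\alpha_k\alpha_\ell$ for all $\alpha$ is equivalent to the symmetric matrix identity $\det(g_1)^{1/(2s)} g_1^{-1} = \det(g_2)^{1/(2s)} g_2^{-1}$ at each point of $U$. Taking determinants of these positive definite $n \times n$ matrices yields $\det(g_1)^{\frac{n}{2s} - 1} = \det(g_2)^{\frac{n}{2s} - 1}$ on $U$; since $n \geq 2$ and $s \in (0,1)$ the exponent $\frac{n}{2s} - 1$ is strictly positive, so $\det(g_1) = \det(g_2)$ on $U$. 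Substituting this back into the matrix identity gives $g_1^{-1} = g_2^{-1}$, hence $g_1 = g_2$ on $U$; as $x_0 \in O$ and the chart were arbitrary, $g_1|_O = g_2|_O$.

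I do not expect a genuinely hard step here — the analytic content is entirely contained in Theorem \ref{thm:bdry_reconstr_ext_nonloc_sol}. The one point requiring care is the displayed identity, i.e.\ the fact that the bilinear pairing written with base measure $dV_{g_j}$ really collapses to the $L^2(U,dx)$-pairing of $u_N$ against $\tilde{L}_{s,O}^j(u_N)$ and is hence genuine measurement data; this rests on $u_N$ being eventually supported in a single chart and on the cancellation $(-\Delta_{g_j})^s u_N = \tfrac{1}{\sqrt{\det(g_j)}}f_j^{u_N}$ against $dV_{g_j} = \sqrt{\det(g_j)}\,dx$. One should also keep track of the role of the hypotheses $n \geq 2$, $s \in (0,1)$ in ensuring $\frac{n}{2s} \neq 1$ in the determinant step.
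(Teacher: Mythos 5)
Your proposal is correct and takes essentially the same route as the paper: insert the metric-independent, localized test functions from Theorem \ref{thm:bdry_reconstr_ext_nonloc_sol} into both operators, deduce $\tilde{C}_{\alpha}(g_1)(x_0)=\tilde{C}_{\alpha}(g_2)(x_0)$ for all $\alpha$, and recover $g$ pointwise via the determinant argument applied to $\det(g)^{\frac{1}{2s}}g^{-1}$. Your explicit chart computation showing that the weighted duality pairing collapses to the $L^2(dx)$ pairing of $u_N$ against $\tilde{L}_{s,O}^j(u_N)$, and is hence genuine measurement data, merely spells out what the paper asserts in one line about the pairing being known information.
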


Both in Corollaries \ref{cor:metric} and \ref{cor:metric2} our measurements are weighted variants of the source-to-solution and the solution-to-source operators, respectively. This is a consequence of the fact that the natural measurements are weighted with the volume form (see Theorems  \ref{thm:bdry_reconstr_ext_nonloc_source} and \ref{thm:bdry_reconstr_ext_nonloc_sol}). In what follows below, by adopting an extension perspective, we will observe that this is consistent with the natural measurements in the classical Calder\'on problem.

\subsection{Main ideas}

Let us discuss the main ideas behind the derivation of Theorems \ref{thm:bdry_reconstr_ext_nonloc_source}, \ref{thm:bdry_reconstr_ext_nonloc_sol}. Here we build on the strategy from \cite{B01} in the version from \cite{NT01} and construct oscillatory solutions. We believe that this construction of approximate solutions and their asymptotic behaviour is of interest in their own right. See also \cite{KV84,KV85} and \cite{SU88} for some of the first applications of such oscillatory solutions in the boundary reconstruction question for the classical Calder\'on setting. These allow us to reconstruct the symbol of the operators $\bar{L}_{s,O}$ and $\tilde{L}_{s,O}$. While it would also be possible to pursue this fully from a pseudodifferential point of view (see, for instance, \cite{SU88, CM20}), we here use a reformulation of the nonlocal problem from above as a local problem via an extension method. This relies on the seminal results of \cite{CS07} for the constant coefficient and of \cite{ST10} for the variable coefficient fractional Laplacian. We further refer to \cite{CG11,BGS15,ATW18,CFSS24} for extensions into various contexts of these ideas.
The advantage of the reformulation of the problem \eqref{eq:source_to_sol} through a local equation is that all the well-developed and available local tools can be invoked in what follows below.

\subsubsection{Boundary reconstruction for a class of degenerate elliptic PDE}
\label{sec:ell_PDE_bdary}

Before returning to the geometric problem from \cite{FGKU21} and Theorems \ref{thm:bdry_reconstr_ext_nonloc_source} and \ref{thm:bdry_reconstr_ext_nonloc_sol}, let us first consider the case of degenerate elliptic equations in the Euclidean upper half space. This setting will serve as the foundation of our argument also in the geometric context by using an extension argument in the next section.

We consider the following degenerate elliptic problem: For $s\in (0,1)$, $\phi\in C_c^{\infty}(\R^n \times \{0\})$ let $\tilde{u} \in \dot{H}^{1}(\R^{n+1}_+, x_{n+1}^{1-2s})$ be a solution to
\begin{align}
\label{eq:main_one1_intro}
\begin{split}
\nabla \cdot x_{n+1}^{1-2s} \tilde{\gamma} \nabla \tilde{u} & = 0 \mbox{ in } \R^{n+1}_+,\\
\tilde{u} & = \phi \mbox{ on } \R^n \times \{0\}.
\end{split}
\end{align}
Here and in what follows we assume that
\begin{align*}
\tilde{\gamma}(x, x_{n+1}) = \begin{pmatrix} \gamma(x) & 0 \\ 0& 1 \end{pmatrix}
\end{align*}
is a smooth, bounded, matrix-valued function such that $\tilde{\gamma}$ is uniformly elliptic and symmetric.
In particular, $\gamma$ only depends on the tangential variables and is independent of the normal one.
We consider the following measurements encoded in the (generalized) Dirichlet-to-Neumann map
\begin{align}
\label{eq:DtN1}
\begin{split}
\Lambda_{\gamma}: H^{s}(\R^n \times \{0\}) & \rightarrow H^{-s}(\R^n \times \{0\}),\\
\phi& \mapsto \Lambda_{\gamma}(\phi):=  \lim\limits_{x_{n+1}\rightarrow 0} x_{n+1}^{1-2s} \p_{n+1} \tilde{u}(\cdot, x_{n+1}).
\end{split}
\end{align}

In this setting, we prove the following boundary reconstruction result. It can be viewed as a ``PDE version'' associated with Theorem \ref{thm:bdry_reconstr_ext_nonloc_sol}.

\thmstarstarnum{\ref{thm:bdry_reconstr_ext_nonloc_sol}}
\begin{thm**}[Whole space PDE setting]
\label{thm:Eucl}
Let  $\gamma\in C^2(\R^n;\R^{n\times n})$ and let $ \tilde{\gamma}$ be as above. Let $s\in (0,1)$ and let $\alpha \in \mathbb{S}^{n-1}$. For each $(x_0,0) \in \R^n \times \{0\}$ there exists a sequence of solutions $(\tilde{u}_N)_{N\in \N} \subset \dot{H}^{1}(\R^{n+1}_+, x_{n+1}^{1-2s})$ of the bulk equation in \eqref{eq:main_one1_intro} with smooth boundary data $(\phi_N)_{N \in \N}$ such that
\begin{align*}
\lim\limits_{N \rightarrow \infty} N^{-2s + \frac{n}{2}} \langle \Lambda_{\gamma}(\phi_N) ,\phi_N  \rangle_{H^{-s}(\R^n), H^{s}(\R^n)}   = (c_1+c_2) (C_{\alpha}(\gamma)(x_0))^{2s},
\end{align*}
where 
\begin{align*}
C_{\alpha}(\gamma)(x_0):=\sqrt{\sum\limits_{j, \ell=1}^{n}  \gamma_{j \ell}(x_0) \alpha_j \alpha_{\ell} },
\end{align*}
and $c_1 = \int\limits_{0}^{\infty} t K_s^2(t) dt$ and $c_{2}= \int\limits_{0}^{\infty} t K_{1-s}^2 (t) dt$. The functions $\phi_N$ are supported in $B_{1/N}((x_0,0)) \cap (\R^n \times \{0\})$ and they are independent of $\gamma$. Moreover, the functions $(N^{\frac{n}{4}-s}\phi_N )_{N\in\mathbb{N}} $ satisfy a uniform $ H^s(M) $ bound as $ N\to\infty $. 
\end{thm**}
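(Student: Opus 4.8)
The plan is to construct oscillatory solutions concentrating near $(x_0,0)$ in the spirit of Brown \cite{B01} and \cite{NT01}, adapted to the degenerate weight $x_{n+1}^{1-2s}$. First I would freeze coefficients: write $\gamma_0 := \gamma(x_0)$, and after a linear change of the tangential variables (which rescales the final answer by the appropriate bilinear form) reduce to the case $\gamma_0 = \Id$, so that the model operator is $\nabla\cdot x_{n+1}^{1-2s}\nabla$. For this constant-coefficient operator one has an explicit separated-variables family: for a tangential frequency $\xi\in\R^n$ the bounded solution of $\nabla\cdot x_{n+1}^{1-2s}\nabla v = 0$ with $v(\cdot,0)=e^{i\xi\cdot x}$ is $v(x,x_{n+1}) = e^{i\xi\cdot x}\,\theta_s(|\xi|x_{n+1})$, where $\theta_s(t) = c_s^{-1}t^sK_s(t)$ is the standard Caffarelli--Silvestre profile (normalized so $\theta_s(0)=1$), and the Neumann datum is $\lim_{x_{n+1}\to 0}x_{n+1}^{1-2s}\p_{n+1}v = -|\xi|^{2s}\hat c_s^{-1}\,e^{i\xi\cdot x}$ up to the explicit constant. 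The ansatz for the approximate solution is then
\begin{align*}
\tilde u_N(x,x_{n+1}) := N^{-\frac n4}\,\chi\!\left(N(x-x_0)\right)\,e^{iN\alpha\cdot(x-x_0)}\,\theta_s\!\left(N x_{n+1}\right),
\end{align*}
with $\chi\in C_c^\infty(\R^n)$ a fixed cutoff, so that $\phi_N(x) = N^{-\frac n4}\chi(N(x-x_0))e^{iN\alpha\cdot(x-x_0)}$ is supported in $B_{1/N}((x_0,0))$ and independent of $\gamma$; the prefactor $N^{-n/4}$ is chosen so that $\|N^{n/4-s}\phi_N\|_{H^s}$ stays bounded, matching the claimed normalization.

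The second step is to estimate the error. Plugging $\tilde u_N$ into $\nabla\cdot x_{n+1}^{1-2s}\tilde\gamma\nabla(\cdot)$ produces three types of terms: (i) commutator terms where derivatives hit the cutoff $\chi(N(x-x_0))$; (ii) the genuine variable-coefficient error $(\gamma(x)-\gamma(x_0))$ acting on the high-frequency oscillation, which on the support of $\phi_N$ is $O(|x-x_0|)\cdot N^2 = O(N)$ relative to the leading $O(N^2)$ oscillatory term — this is where the $C^2$ hypothesis on $\gamma$ is used, via a Taylor expansion $\gamma(x) = \gamma(x_0) + O(|x-x_0|)$; and (iii) lower-order terms from $\nabla\cdot$ acting on $\gamma$. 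One then solves the degenerate elliptic equation exactly with this right-hand side and zero boundary data, using the energy estimate in the weighted space $\dot H^1(\R^{n+1}_+,x_{n+1}^{1-2s})$ (Lax--Milgram with the uniformly elliptic, weighted bilinear form), obtaining a correction $r_N$ with $\|r_N\|_{\dot H^1(x_{n+1}^{1-2s})}$ controlled by the negative-order norm of the error; the true solution is $\tilde u_N + r_N$. The key quantitative point is that the correction is lower order than the main term in the pairing $\langle\Lambda_\gamma(\phi_N),\phi_N\rangle$ after multiplication by $N^{-2s+n/2}$.

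The third step is the asymptotic computation of the bilinear form. By the definition \eqref{eq:DtN1} and an integration by parts in $\R^{n+1}_+$,
\begin{align*}
\langle\Lambda_\gamma(\phi_N),\phi_N\rangle = -\int_{\R^{n+1}_+} x_{n+1}^{1-2s}\,\tilde\gamma\nabla(\tilde u_N+r_N)\cdot\overline{\nabla(\tilde u_N+r_N)}\,dx\,dx_{n+1},
\end{align*}
so the leading contribution is $-\int x_{n+1}^{1-2s}\tilde\gamma\nabla\tilde u_N\cdot\overline{\nabla\tilde u_N}$. Splitting $\nabla\tilde u_N$ into its tangential and normal parts, the tangential gradient is dominated by $iN\alpha\,e^{iN\alpha\cdot(x-x_0)}N^{-n/4}\chi\,\theta_s(Nx_{n+1})$, contributing (after the change of variables $y=N(x-x_0)$, $t=Nx_{n+1}$ and using $\int_{\R^n}|\chi|^2 = \|\chi\|_{L^2}^2$, absorbed into a normalization or tracked explicitly) a factor $N^{2s-n/2}(\alpha\cdot\gamma_0\alpha)\int_0^\infty t^{1-2s}\theta_s(t)^2\,dt$; the normal gradient gives $N^{-n/4}\chi\,e^{iN\alpha\cdot(x-x_0)}N\,\theta_s'(Nx_{n+1})$, contributing $N^{2s-n/2}\int_0^\infty t^{1-2s}\theta_s'(t)^2\,dt$. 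Using $\theta_s(t) = c_s^{-1}t^sK_s(t)$ and the Bessel identities relating $K_s'$ to $K_{s\pm1} = K_{1-s}$, together with the constant $c_s = -2^{2s-1}\Gamma(s)/\Gamma(1-s)$, the two weighted integrals reduce precisely to $c_1 = \int_0^\infty tK_s^2(t)\,dt$ and $c_2 = \int_0^\infty tK_{1-s}^2(t)\,dt$ after substituting $t^{1-2s}\cdot t^{2s} = t$; multiplying by $N^{-2s+n/2}$ cancels the $N$-dependence and yields the limit $(c_1+c_2)(C_\alpha(\gamma)(x_0))^{2s}$, since $C_\alpha(\gamma)(x_0)^{2s} = (\alpha\cdot\gamma_0\alpha)^s$.

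\medskip

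\textbf{Main obstacle.} The principal difficulty is the error analysis in step two: one must show that the correction $r_N$ contributes a term that is $o(N^{2s-n/2})$ in the energy pairing, i.e., genuinely lower order than the main term, despite the degenerate Muckenhoupt weight $x_{n+1}^{1-2s}$ (which both helps, by damping near $x_{n+1}=0$, and complicates trace and Hardy-type inequalities). This requires a careful weighted energy estimate for \eqref{eq:main_one1_intro} with the sharp $N$-dependence — the error right-hand side must be measured in the dual weighted norm, and one must check that the commutator and Taylor-remainder terms, integrated against the correction, are small after the $N^{-2s+n/2}$ rescaling. A secondary subtlety is that $\theta_s$ and $\theta_s'$ decay exponentially but $\theta_s'$ is singular like $t^{1-2s}$... no, $\theta_s'(t)\sim t^{1-2s}$ near $0$ actually behaves well against the weight $t^{1-2s}$; still, one must verify the weighted integrals $\int_0^\infty t^{1-2s}\theta_s'(t)^2\,dt$ and $\int_0^\infty t^{1-2s}\theta_s(t)^2\,dt$ converge at both endpoints (exponential decay at $\infty$ via the Bessel asymptotics, integrable power at $0$), which pins down the $C^2$ regularity threshold for $\gamma$ as the minimal one making the Taylor remainder step rigorous.
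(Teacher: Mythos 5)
Your overall strategy (oscillatory data concentrating at $(x_0,0)$, Caffarelli--Silvestre profile $t^sK_s(t)$ in the normal variable, energy/quadratic-form identity, Bessel integrals producing $c_1,c_2$) is the right one and matches the paper in spirit, but the concrete ansatz has a scaling flaw that breaks both the main-term computation and the error analysis. You localize the tangential cutoff at scale $1/N$, the same scale as the oscillation $e^{iN\alpha\cdot x}$. Then $\nabla_x$ hitting $\chi(N(x-x_0))$ produces a factor $N$, i.e.\ the cutoff-gradient term is of the \emph{same} order as the phase-gradient term $iN\alpha\,\chi$, so your claim that the tangential gradient is ``dominated'' by $iN\alpha\,\chi$ is false; equivalently, $\hat\phi_N$ has frequency width comparable to its center frequency $N\alpha$, and the limit of the quadratic form would be a smeared average of the symbol, $\int|\xi|_{\gamma_0}^{2s}|\hat\chi(\xi-\alpha)|^2d\xi$, rather than $(\alpha\cdot\gamma(x_0)\alpha)^{s}$. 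The same problem destroys the remainder estimate: after the leading cancellation, $\nabla\cdot x_{n+1}^{1-2s}\tilde\gamma\nabla\tilde u_N$ still contains $O(N^2)$ cutoff cross terms, so the correction $r_N$ has energy comparable to that of $\tilde u_N$ and its contribution to the pairing cannot be shown to be $o(N^{2s-n/2})$. The paper avoids this by the two-scale (parabolic) localization $z=\sqrt N\,x$, $z_{n+1}=Nx_{n+1}$: the data live at tangential scale $N^{-1/2}$, so each cutoff derivative and each Taylor term of $\gamma$ costs only $N^{1/2}$ relative to $N^2$. Your normalization is also inconsistent with your own scaling: with support scale $1/N$ the prefactor $N^{-n/4}$ makes $N^{n/4-s}\|\phi_N\|_{H^s}\to 0$ and the rescaled pairing tend to $0$, not to the stated positive limit.

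Even with the correct two-scale localization, the single leading profile $\eta(z)(C_0z_{n+1})^sK_s(C_0z_{n+1})$ is not enough for the error analysis: the first residual term (from $2iN\,\gamma\alpha\cdot\nabla_z$ acting on the cutoff and from the first Taylor term of $\gamma$) only gains $N^{-1/2}$, and the crude Hardy/Cauchy--Schwarz estimate of $\int r_N\,\nabla\cdot(x_{n+1}^{1-2s}\tilde\gamma\nabla\overline{\tilde u_N})$ then overshoots $N^{2s-n/2}$ by a positive power of $N$. This is why the paper builds the corrector hierarchy $v_1,\dots,v_{2k}$ by iteratively solving inhomogeneous modified-Bessel ODEs (Lemma on the normal ODE, part (b)) and needs the residual $o(N^{1-k+2s})z_{n+1}^{1-2s}$ with $k\ge1$ before the Hardy-inequality argument closes; your proposal contains no such correctors and no substitute mechanism, and you explicitly leave this (correctly identified) ``main obstacle'' unresolved. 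Two smaller points: reducing to $\gamma(x_0)=\Id$ by a $\gamma$-dependent linear change of variables is at odds with the requirement that $\phi_N$ be independent of $\gamma$ (the paper instead keeps $\gamma$ general and absorbs the anisotropy into the argument $C_0z_{n+1}$ of $K_s$, using that $t^sK_s(t)\to 2^{s-1}\Gamma(s)$ independently of $C_0$), and your quadratic-form identity carries a spurious minus sign relative to the paper's convention \eqref{eq:DtN2_def}, which would contradict the positivity of the claimed limit.
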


As in the local case, a direct corollary of this uniqueness property is the stability of the boundary reconstruction:

\begin{cor}
\label{cor:stab_Eucl}
Let $s\in (0,1)$ and $j \in \{1,2\}$.
Let $\gamma_j \in C^2(\R^n, \R^{n\times n})$ with $0 \leq C_1 |\xi|^2 \leq \xi \cdot \gamma_j(x) \xi \leq C_2 |\xi|^2 $ for some $0< C_1 \leq C_2 < \infty$ and for all $\xi \in \R^n$ and $x \in \R^n$.
 Then there exists a constant $C = C(s,n,C_1, C_2)>0$ such that
\begin{align*}
\|\gamma_1 - \gamma_{2}\|_{L^{\infty}(\R^n)} \leq C \|\Lambda_{\gamma_1}-\Lambda_{\gamma_2}\|_{\mathcal{L}(H^{s}(\R^n),H^{-s}(\R^n))}.
\end{align*}
\end{cor}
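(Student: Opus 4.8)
The plan is to use the $\gamma$-independent oscillatory boundary data from Theorem~\ref{thm:Eucl} as test functions for both Dirichlet-to-Neumann maps simultaneously and to read off the difference of their leading symbols. Concretely, I would first fix $x_0 \in \R^n$ and $\alpha \in \mathbb{S}^{n-1}$ and let $(\phi_N)_{N \in \N}$ be the sequence provided by Theorem~\ref{thm:Eucl} for this pair. Since these functions do not depend on the coefficient matrix, the same sequence is admissible for $\gamma_1$ and for $\gamma_2$, and applying the asymptotic identity of Theorem~\ref{thm:Eucl} to each of them and subtracting (using bilinearity of the duality pairing) yields
\begin{align*}
(c_1+c_2)\,\big|(C_\alpha(\gamma_1)(x_0))^{2s}-(C_\alpha(\gamma_2)(x_0))^{2s}\big| = \lim_{N\to\infty} N^{-2s+\frac{n}{2}}\,\big|\langle (\Lambda_{\gamma_1}-\Lambda_{\gamma_2})(\phi_N),\phi_N\rangle_{H^{-s}(\R^n),H^s(\R^n)}\big|,
\end{align*}
where $c_1+c_2>0$ because $c_1$ and $c_2$ are integrals over $(0,\infty)$ of strictly positive functions.

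Next I would estimate the right-hand side. Bounding the pairing by the operator norm gives $|\langle (\Lambda_{\gamma_1}-\Lambda_{\gamma_2})(\phi_N),\phi_N\rangle_{H^{-s}(\R^n),H^s(\R^n)}| \le \|\Lambda_{\gamma_1}-\Lambda_{\gamma_2}\|_{\mathcal{L}(H^s(\R^n),H^{-s}(\R^n))}\,\|\phi_N\|_{H^s(\R^n)}^2$, and the uniform bound on $(N^{\frac{n}{4}-s}\phi_N)_N$ from Theorem~\ref{thm:Eucl} shows that the prefactor $N^{-2s+\frac{n}{2}}\|\phi_N\|_{H^s(\R^n)}^2 = (N^{\frac{n}{4}-s}\|\phi_N\|_{H^s(\R^n)})^2$ stays bounded by a constant $K$ as $N\to\infty$. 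Passing to the limit therefore produces
\begin{align*}
(c_1+c_2)\,\big|(C_\alpha(\gamma_1)(x_0))^{2s}-(C_\alpha(\gamma_2)(x_0))^{2s}\big| \le K\,\|\Lambda_{\gamma_1}-\Lambda_{\gamma_2}\|_{\mathcal{L}(H^s(\R^n),H^{-s}(\R^n))}
\end{align*}
for every $x_0\in\R^n$ and $\alpha\in\mathbb{S}^{n-1}$.

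Finally I would convert this symbol estimate back into a bound on $\gamma_1-\gamma_2$. Since $(C_\alpha(\gamma_j)(x_0))^2 = \alpha\cdot\gamma_j(x_0)\alpha \in [C_1,C_2]$ by uniform ellipticity, and since $t\mapsto t^s$ is bi-Lipschitz on $[C_1,C_2]$ with $|t_1^s-t_2^s|\ge s\,C_2^{s-1}\,|t_1-t_2|$ for $s\in(0,1)$ (by the mean value theorem, as $t^{s-1}$ is decreasing), one obtains $|\alpha\cdot(\gamma_1(x_0)-\gamma_2(x_0))\alpha| \le C\,\|\Lambda_{\gamma_1}-\Lambda_{\gamma_2}\|_{\mathcal{L}(H^s(\R^n),H^{-s}(\R^n))}$ for all $x_0$ and $\alpha$, with $C$ depending only on $s$, $C_1$, $C_2$ and $K$. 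Because $\gamma_1(x_0)-\gamma_2(x_0)$ is symmetric, the supremum over $\alpha\in\mathbb{S}^{n-1}$ of the left-hand side is precisely its spectral norm, which is comparable up to a dimensional constant to any fixed matrix norm; taking then the supremum over $x_0\in\R^n$ gives the asserted estimate with constant $C(s,n,C_1,C_2)$.

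The only step beyond bookkeeping — and the point I expect to require genuine care — is verifying that the constant $K$ above is uniform not merely in $N$ but also in the parameters $x_0$ and $\alpha$, since otherwise the final supremum over $\R^n\times\mathbb{S}^{n-1}$ would not close. This uniformity should follow from the explicit construction underlying Theorem~\ref{thm:Eucl}: the $\phi_N$ are built from a single fixed profile by scaling, translation by $x_0$ and a rotation associated with $\alpha$, and the $H^s(\R^n)$-norm is invariant under translations and rotations, so the bound on $N^{\frac{n}{4}-s}\|\phi_N\|_{H^s(\R^n)}$ is genuinely independent of $(x_0,\alpha)$. Once this uniformity is recorded, the corollary follows.
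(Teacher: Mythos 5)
Your proposal is correct and follows essentially the same route as the paper: test both Dirichlet-to-Neumann maps with the $\gamma$-independent data $\phi_N$ from Theorem \ref{thm:Eucl}, bound the pairing by the operator norm using the uniform $H^{s}$ bound on $(N^{\frac{n}{4}-s}\phi_N)_N$ (whose uniformity in $x_0$ and $\alpha$ you rightly flag and which the paper also invokes), and then convert the resulting symbol estimate into a pointwise bound on $\gamma_1-\gamma_2$. The only cosmetic difference is the last elementary step, where you apply the mean value theorem to $t\mapsto t^{s}$ on $[C_1,C_2]$ and use the spectral-norm characterization of symmetric matrices, whereas the paper uses Lipschitz continuity of the inverse power map together with polarization via $\alpha=e_j$ and $\alpha=\tfrac{1}{\sqrt{2}}(e_j+e_\ell)$; these are equivalent.
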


A similar result as that of Theorem \ref{thm:Eucl} also holds if one considers Neumann-to-Dirichlet measurement data (see Theorem \ref{prop:NtD} in Section \ref{sec:NtD} below). Theorem \ref{thm:Eucl} provides the analogue of the boundary reconstruction results by Brown \cite{B01} and Nakamura-Tanuma \cite{NT01} in the case of our class of degenerate elliptic equations. We use these together with an extension technique (see \cite{CS07,ST10}) in order to return to a similar result for the fractional Laplace-Beltrami operator in the next section.

\begin{rmk}
\label{rmk:Eucl}
We remark that as the above results all build on extremely localized solutions and, thus, in addition to the natural energy estimates, mostly on local properties of elliptic equations, the boundary reconstruction results remain valid in the context of the unbounded geometries from \cite{CGRU23} and \cite{FGKRSU25}.
\end{rmk}

\subsubsection{Returning to the manifold setting}
\label{sec:mfd}
Let us next return to the geometric setting. To this end, we reformulate the operator \eqref{eq:source_to_sol} in terms of a local problem extended in an additional dimension in local coordinates. More precisely, let $f\in C_c^{\infty}(O)$ with $\int\limits_{M} f dV_g = 0$ and consider $\tilde{u}^f \in \dot{H}^1(M \times \R_+ , x_{n+1}^{1-2s})$ the unique solution to
\begin{align}
\label{eq:Neumann_Dirichlet_main}
\begin{split}
(\p_{n+1} x_{n+1}^{1-2s} \p_{n+1} + x_{n+1}^{1-2s} \D_g) \tilde{u}^f & = 0 \mbox{ on } M \times \R_+,\\
c_s \sqrt{\det(g)}\lim\limits_{x_{n+1} \rightarrow 0} x_{n+1}^{1-2s} \p_{n+1} \tilde{u}^f & = f \mbox{ on } M,
\end{split}
\end{align}
where $\Delta_g$ denotes the Laplace-Beltrami operator on $(M,g)$ and
\begin{align}
\label{eq:const_CS}
c_s := - \frac{2^{2s-1}\Gamma(s)}{\Gamma(1-s)} \neq 0.
\end{align}
Then, recalling the source-to-solution operator $\bar{L}_{s,O}$ from \eqref{eq:source_to_sol_new}, by the results from \cite{CS07} in the constant and from \cite{ST10} in the variable coefficient setting, it holds that
\begin{align*}
\bar{L}_{s,O} (f) = \tilde{u}^f|_{O}.
\end{align*}
We also refer to \cite{BGS15} and \cite{CFSS24} where this is formulated explicitly in the setting of manifolds.
With this observation, Theorem \ref{thm:bdry_reconstr_ext_nonloc_source} can be recast in the following geometric extension framework. 

\thmstarnum{\ref{thm:bdry_reconstr_ext_nonloc_source}}
\begin{thm*}[Neumann-to-Dirichlet data and boundary reconstruction]
\label{thm:bdry_reconstr_ext}
Let $s\in (0,1)$ and  assume that $(M,g)$ is a closed, connected $C^4$-regular Riemannian manifold of dimension $n\geq 2$. Let $O\subset M$ be a non-empty open set and let $\bar{L}_{s,O}$ denote the source-to-solution operator from \eqref{eq:source_to_sol_new} associated with the metric $g$.

Then, for each $\alpha \in \mathbb{S}^{n-1}$ and each $(x_0,0) \in M \times \{0\}$ there exist
\begin{itemize}
\item a sequence $(N_{k,\alpha})_{k \in \N} \subset \R$ with $N_{k,\alpha} \rightarrow \infty$ as $k \rightarrow \infty$,
\item a sequence of solutions $(\tilde{u}_{N_{k,\alpha}})_{k\in \N} \subset \dot{H}^{1}(M\times \R_+, x_{n+1}^{1-2s})$ of the bulk equation in \eqref{eq:Neumann_Dirichlet_main} with smooth generalized Neumann boundary data $(f_{N_{k,\alpha}})_{k \in \N} $ with $\int\limits_{M} f_{N_{k,\alpha}} dV_g = 0$
\end{itemize} 
  such that
\begin{align*}
\lim\limits_{k \rightarrow \infty} N_{k,\alpha}^{2s + \frac{n}{2}} \langle f_{N_{k,\alpha}} , \frac{1}{\sqrt{\det(g)}} \bar{ L}_{s,O}(f_{N_{k,\alpha}}) \rangle_{H^{-s}(M,dV_g), H^{s}(M,dV_g)}   = \tilde{c} (\tilde{C}_{\alpha}(g)(x_0))^{-2s},
\end{align*}
where $\tilde{C}_{\alpha}(g)(x_0)$ is the constant from \eqref{eq:zero_order_symbol} and $\tilde{c} = c_s^{-1}\hat{c}_s^{-2}(c_1+c_2)$ for  $c_1 = \int\limits_{0}^{\infty} t K_s^2(t) dt$, $c_{2}= \int\limits_{0}^{\infty} t K_{1-s}^2 (t) dt$, $c_s= - \frac{2^{2s-1}\Gamma(s)}{\Gamma(1-s)} \neq 0$ and $ \hat{c}_s=2^{-s}\Gamma(1-s) $, where $\Gamma(\cdot)$ denotes the Gamma-function and $ K_s(\cdot), K_{1-s}(\cdot) $ denote the modified Bessel functions.
The functions $f_{N_{k,\alpha}}$ are supported in $B_{1/N_{k,\alpha}}((x_0,0)) \cap (M \times \{0\})$ and the sequence $( f_{N_{k,\alpha}})_{k \in \N} $ is independent of $g$. 
\end{thm*}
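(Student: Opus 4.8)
The plan is to derive the starred version of Theorem~\ref{thm:bdry_reconstr_ext_nonloc_source} as a consequence of the Euclidean PDE result (Theorem~\ref{thm:Eucl}, i.e.\ the doubly-starred theorem) via the Caffarelli--Silvestre/Stinga--Torrea extension and a localization argument. First I would fix $x_0 \in O$ and $\alpha \in \mathbb{S}^{n-1}$ and pick geodesic normal coordinates centered at $x_0$, so that $g_{jk}(x_0) = \delta_{jk}$ and the metric is $C^3$-close to the identity on a small coordinate ball; this is where the $C^4$-regularity is used (one derivative is lost passing to the extension with variable $\gamma$, and Theorem~\ref{thm:Eucl} needs $\gamma \in C^2$). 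In these coordinates the bulk equation in \eqref{eq:Neumann_Dirichlet_main}, after multiplying by $\sqrt{\det g}$, becomes exactly the degenerate divergence-form equation $\nabla\cdot x_{n+1}^{1-2s}\tilde\gamma\nabla\tilde u = 0$ with $\gamma^{jk}(x) = \sqrt{\det g(x)}\, g^{jk}(x)$ on the tangential variables (extended trivially in $x_{n+1}$); the generalized Neumann condition $c_s\sqrt{\det g}\lim x_{n+1}^{1-2s}\partial_{n+1}\tilde u^f = f$ becomes $c_s\Lambda_\gamma(\tilde u^f|_{x_{n+1}=0}) = f$ up to the weight. The point is that $\gamma$ is $g$-independent in the sense required by Theorem~\ref{thm:Eucl} only through its values, and $C_\alpha(\gamma)(x_0)^2 = \sum \gamma^{jk}(x_0)\alpha_j\alpha_k = \det(g(x_0))^{1/2}\sum g^{jk}(x_0)\alpha_j\alpha_k$, which matches $\tilde C_\alpha(g)(x_0)$ after accounting for the $\det(g)^{1/2s}$ normalization built into \eqref{eq:zero_order_symbol}.

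Second, because the extension in Theorem~\ref{thm:Eucl} is on all of $\R^{n+1}_+$ while here the base is a closed manifold, I would transplant the highly localized boundary data $\phi_N$ (supported in $B_{1/N}((x_0,0))$) to the manifold via the coordinate chart, and use a cutoff to make $\int_M f_N\, dV_g = 0$: the natural generalized Neumann datum coming from $\phi_N$ is $f_N := c_s\sqrt{\det g}\,\Lambda_\gamma(\phi_N)$ read back on $M$, but this need not have zero mean, so I subtract a fixed small smooth correction supported away from $x_0$ (or renormalize by a constant times a fixed bump) whose contribution to the quadratic form is $o(N^{-2s-n/2})$ by the localization of $\phi_N$ and energy estimates. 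This forces the sequence $N_{k,\alpha}$ to be a specific real sequence (the renormalizing constants and the mean-zero correction perturb $N$ slightly), which is why the statement allows $(N_{k,\alpha})_{k}\subset\R$ rather than $\N$. One then shows that the solution on $M\times\R_+$ with this corrected Neumann datum differs from the Euclidean extension $\tilde u_{\phi_N}$ (cut off and transplanted) by an error that is negligible in the relevant quadratic form: this is an interior/boundary comparison using the exponential concentration of $\tilde u_{\phi_N}$ near $(x_0,0)$ at scale $1/N$, the Caccioppoli/energy estimate for the degenerate equation, and the fact that the coefficients of the manifold extension agree with the frozen Euclidean ones up to $O(1/N)$ on the support scale.

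Third, I would track the constants. The doubly-starred theorem gives $\lim N^{-2s+n/2}\langle\Lambda_\gamma(\phi_N),\phi_N\rangle = (c_1+c_2)C_\alpha(\gamma)(x_0)^{2s}$. The source-to-solution pairing in the starred theorem is $\langle f_N, \frac{1}{\sqrt{\det g}}\bar L_{s,O}(f_N)\rangle$ with $f_N = c_s\sqrt{\det g}\,\Lambda_\gamma(\phi_N)$ and $\bar L_{s,O}(f_N) = \tilde u^{f_N}|_O$, whose trace equals $\phi_N$ up to the $\hat c_s$ normalization relating the generalized Neumann data to the Dirichlet trace in the variable-coefficient extension (this is the source of the $\hat c_s^{-2}$, and the factor $c_s^{-1}$ comes from inverting the constant in \eqref{eq:Neumann_Dirichlet_main}); the power of $N$ flips from $N^{-2s+n/2}$ to $N^{2s+n/2}$ because we pair $f_N\sim N^{2s}\phi_N$-type objects, and the symbol inverts from $C_\alpha^{2s}$ to $\tilde C_\alpha^{-2s}$ because the Neumann-to-Dirichlet map is (to leading order) the inverse of the Dirichlet-to-Neumann map $\Lambda_\gamma$. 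Assembling these gives $\tilde c = c_s^{-1}\hat c_s^{-2}(c_1+c_2)$ and the claimed limit.

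The main obstacle I expect is the rigorous localization/comparison step on the manifold: controlling the difference between the genuine solution of \eqref{eq:Neumann_Dirichlet_main} on $M\times\R_+$ with compactly supported, mean-zero Neumann data and the transplanted Euclidean extension, uniformly as $N\to\infty$, at the sharp rate $o(N^{-2s-n/2})$ in the quadratic form. This requires careful weighted energy estimates for the degenerate equation $\partial_{n+1}x_{n+1}^{1-2s}\partial_{n+1} + x_{n+1}^{1-2s}\Delta_g$ — in particular, quantitative decay of the Euclidean extension $\tilde u_{\phi_N}$ in $x_{n+1}$ and in the tangential directions on scale $1/N$, which should follow from the explicit Poisson-kernel representation and its Bessel-function structure (the same $K_s, K_{1-s}$ that produce $c_1, c_2$) — together with a cutoff commutator argument to handle the global-to-local passage and the mean-zero correction. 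I would isolate this as a separate lemma before proving the theorem.
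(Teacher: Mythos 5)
There is a genuine gap, and it is structural. You propose to obtain the Neumann/source-to-solution result by ``inverting'' the Dirichlet-to-Neumann result: you take the Dirichlet data $\phi_N$ from Theorem \ref{thm:Eucl} and set $f_N := c_s\sqrt{\det g}\,\Lambda_\gamma(\phi_N)$. But then $f_N$ depends on the unknown metric $g$ (through $\Lambda_\gamma$) and, since $\Lambda_\gamma$ is nonlocal, is not supported in $B_{1/N_{k,\alpha}}((x_0,0))$ -- both in direct contradiction with the statement you are proving, and fatally for its purpose: the test data must be known a priori for the limit to be usable as a reconstruction formula (this is exactly what Corollary \ref{cor:metric} needs). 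The scaling also does not close: $\Lambda_\gamma(\phi_N)$ is of size $\sim N^{2s}\phi_N$, so with your $f_N$ the quantity $N^{2s+\frac{n}{2}}\langle f_N,\frac{1}{\sqrt{\det g}}\bar L_{s,O}(f_N)\rangle$ behaves like $N^{4s}$ and diverges; repairing this by an $N^{-2s}C_0^{-2s}$-type renormalization of the data reintroduces $g$-dependence. The paper's proof is genuinely different: it prescribes the $g$-independent oscillatory Neumann data $f_N(x)=\eta(\sqrt N x)e^{iN\alpha\cdot x}$ directly, builds approximate solutions whose higher-order terms are corrected to have \emph{vanishing} generalized Neumann traces (Proposition \ref{prop:expansion_NtD}), and renormalizes the approximate \emph{solution} by $\hat c_s^{-1}N^{-2s}C_0^{-2s}$ (see \eqref{eq:NtD_approx_sol}); the inverse symbol $\tilde C_\alpha^{-2s}$, the power $N^{2s+\frac n2}$ and the factor $\hat c_s^{-2}$ all come out of that renormalization, not out of any operator inversion. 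Likewise, the mean-zero constraint is enforced exactly by choosing $\eta$ as a mollified cube indicator and selecting $N_{k,\alpha}$ from the zero set of $\hat\eta(\sqrt{N}\alpha)$ (Lemma \ref{lem:cut-off}); that -- not a perturbation of $N$ by renormalizing constants -- is why $(N_{k,\alpha})\subset\R$, and your alternative of subtracting a correction supported away from $x_0$ would again break the support claim.

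Two further points in your reduction are off. First, absorbing $\sqrt{\det g}$ into the tangential block, $\gamma^{jk}=\sqrt{\det g}\,g^{jk}$, is incompatible with the structure required by Theorem \ref{thm:Eucl} (the weight multiplies the normal block of $\tilde\gamma$ as well), and it produces the wrong constant: one gets $(\det g)^{s/2}(g^{-1}\alpha\cdot\alpha)^{s}$ instead of $\tilde C_\alpha(g)^{2s}=(\det g)^{1/2}(g^{-1}\alpha\cdot\alpha)^{s}$, which agree only for $s=\tfrac12$. The paper keeps the weight $c=\sqrt{\det g}$ separate (see \eqref{eq:main_one1D}, \eqref{eq:constant_weighted}); it enters the boundary symbol with the power $c^{1/s}$, which is precisely the $\det(g)^{\frac{1}{2s}}$ in \eqref{eq:zero_order_symbol}. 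Second, your explanation of the $C^4$ hypothesis is incorrect: no derivative is lost in forming the extension coefficients from $g$, and the Euclidean Neumann theorem (Theorem \ref{prop:NtD}) itself already requires $C^4$; the regularity $k\geq 2$ is needed in the error estimates for the Neumann problem (the term $\widetilde{III}_1$ in its proof), which have no counterpart in your outline.
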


Let us comment on the origins of the constant $\tilde{c}=c_s^{-1}\hat{c}_s^{-2}(c_1+c_2)$: In contrast to the Dirichlet-to-Neumann whole space setting, the constant carries the additional factor $c_s^{-1}\hat{c}_s^{-2}$. Here, the factor $ c_s^{-1} $ originates from \eqref{eq:Neumann_Dirichlet_main} and the extension result, and the constant $ \hat{c}_s^{-2} $ is a consequence of the renormalization of approximate solutions for the Neumann case.

A similar result holds in case that the Dirichlet-to-Neumann data are given on the extension level.
In this case one considers
\begin{align}
\label{eq:Dirichlet_Neumann_main}
\begin{split}
(\p_{n+1} x_{n+1}^{1-2s} \p_{n+1} + x_{n+1}^{1-2s} \D_g)  \tilde{u}^{\phi} & = 0 \mbox{ on } M \times \R_+,\\
 \tilde{u}^{\phi} & = \phi \mbox{ on } M \times \{0\}.  
\end{split}
\end{align}
Then, by the results from \cite{CS07} in the constant and from \cite{ST10} in the variable coefficient setting, it holds that
\begin{align*}
\tilde{L}_{s,O}: C_c^{\infty}(O ) \rightarrow H^{-s}(O), \
\tilde{L}_{s,O} (\phi)(x) = c_s \sqrt{\det(g)} \lim\limits_{x_{n+1} \rightarrow 0} x_{n+1}^{1-2s} \p_{n+1} \tilde{u}^{\phi}(x,x_{n+1})|_{O},
\end{align*}
where $\tilde{L}_{s,O}$ is the operator from \eqref{eq:sol_to_source} and $c_s \neq 0$ is the constant from \eqref{eq:const_CS}.
With this in hand, Theorem \ref{thm:bdry_reconstr_ext_nonloc_sol} can be reformulated as follows.

\thmstarnum{\ref{thm:bdry_reconstr_ext_nonloc_sol}}
\begin{thm*}[Dirichlet-to-Neumann data and boundary reconstruction]
\label{thm:bdry_reconstr_ext_NtD}
Let $s\in (0,1)$
and  assume that $(M,g)$ is a closed, connected $C^2$-regular Riemannian manifold of dimension $n\geq 2$. Let $O\subset M$ be a non-empty open set and let $\tilde{L}_{s,O}$ denote the solution-to-source operator from \eqref{eq:sol_to_source} associated with the metric $g$. For each $(x_0,0) \in M \times \{0\}$ there exists a sequence of solutions $(\tilde{u}_N)_{N \in \N} \subset \dot{H}^{1}(M \times \R_+, x_{n+1}^{1-2s})$ of the bulk equation in \eqref{eq:Dirichlet_Neumann_main} with smooth boundary data $(\phi_N)_{N \in \N} $ such that
\begin{align*}
\lim\limits_{N \rightarrow \infty} N^{-2s + \frac{n}{2}} \langle  \frac{1}{\sqrt{\det(g)}} \tilde{L}_{s,O}(\phi_N), {\phi_N} \rangle_{H^{-s}(M, d V_g), H^{s}(M, dV_g)} =   c_s(c_1 + c_2) (\tilde{C}_{\alpha}(g)(x_0))^{2s}.
\end{align*}
where $\tilde{C}_{\alpha}(g)(x_0)$ is the constant from \eqref{eq:zero_order_symbol} and $c_1 ,c_2, c_s, \hat{c}_s$ are the constants from Theorem \ref{thm:bdry_reconstr_ext_nonloc_sol}.
The functions $\phi_N$ are supported in $B_{1/N}((x_0,0)) \cap (M \times \{0\})$ and they are independent of $g$. The rescaled functions $(N^{-s + \frac{n}{4}} \phi_N)_{N \in \N}$ satisfy a uniform $H^{s}(M)$ bound as $N \rightarrow \infty$.
\end{thm*}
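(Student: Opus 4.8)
\textbf{Proof proposal for Theorem \ref{thm:bdry_reconstr_ext_NtD} (the $*$-version of Theorem \ref{thm:bdry_reconstr_ext_nonloc_sol}).}

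The plan is to reduce the geometric Dirichlet-to-Neumann statement to the Euclidean model result of Theorem \ref{thm:Eucl} by working in a fixed boundary normal coordinate chart and then transporting the degenerate elliptic extension problem \eqref{eq:Dirichlet_Neumann_main} to a problem of the form \eqref{eq:main_one1_intro} on $\R^{n+1}_+$. First I would fix $(x_0,0)\in M\times\{0\}$, choose a coordinate chart $(U,\psi)$ around $x_0$ with $\psi(x_0)=0$, and write the Laplace--Beltrami operator in these coordinates as $\Delta_g u = \frac{1}{\sqrt{\det g}}\p_j(\sqrt{\det g}\,g^{j\ell}\p_\ell u)$. The crucial algebraic observation, already visible in the definition \eqref{eq:zero_order_symbol}, is that after conjugating by the weight $\sqrt{\det g}$ the bulk equation $(\p_{n+1}x_{n+1}^{1-2s}\p_{n+1}+x_{n+1}^{1-2s}\Delta_g)\tilde u=0$ multiplied by $\sqrt{\det g}$ becomes, up to the $x_{n+1}$-dependent weight, a divergence-form equation $\nabla\cdot(x_{n+1}^{1-2s}\tilde\gamma\nabla\tilde u)=0$ with tangential block $\gamma(x)=\sqrt{\det g(x)}\,g^{-1}(x)$; rescaling the normal variable and using that only the value $\gamma(x_0)=\sqrt{\det g(x_0)}\,g^{-1}(x_0)$ enters the leading-order asymptotics then produces exactly the symbol $\tilde C_\alpha(g)(x_0)$ of \eqref{eq:zero_order_symbol} once one accounts for the homogeneity $\det(g)^{1/(2s)}$ coming from the $N$-rescaling exponents. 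I would be careful here that the mismatch between $\gamma(x_0)=\sqrt{\det g}\,g^{-1}$ and the required $\det(g)^{1/(2s)}g^{-1}$ is absorbed by an extra scalar rescaling of the oscillatory frequency, which is why the normalization in \eqref{eq:zero_order_symbol} carries the power $1/(2s)$ rather than $1/2$.

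The main construction step is to import the oscillatory approximate solutions $\tilde u_N$ of Theorem \ref{thm:Eucl} for the frozen-coefficient matrix $\gamma(x_0)$, localized in $B_{1/N}((x_0,0))$, and to use them as the boundary data $\phi_N$ (pulled back to $M$ via $\psi$) for the genuine variable-coefficient extension problem \eqref{eq:Dirichlet_Neumann_main} on $M\times\R_+$. Since the supports shrink like $1/N$, the difference between the variable metric $g(x)$ and its value $g(x_0)$ is $O(1/N)$ on the support (using the $C^2$ regularity hypothesis), so a perturbation/energy estimate in the weighted space $\dot H^1(M\times\R_+,x_{n+1}^{1-2s})$ shows that the true solution differs from the Euclidean model solution by a term whose contribution to the quadratic form $\langle \tfrac{1}{\sqrt{\det g}}\tilde L_{s,O}(\phi_N),\phi_N\rangle$ is of lower order after multiplication by $N^{-2s+n/2}$. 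Concretely I would: (i) establish the uniform $H^s(M)$ bound on $N^{n/4-s}\phi_N$ by direct computation from the explicit form of the Euclidean data (this is stated in Theorem \ref{thm:Eucl} and transfers under the bi-Lipschitz chart map); (ii) use the identity relating $\tilde L_{s,O}(\phi_N)$ to the weighted Neumann trace $c_s\sqrt{\det g}\lim x_{n+1}^{1-2s}\p_{n+1}\tilde u^{\phi_N}$, whence $\langle\frac{1}{\sqrt{\det g}}\tilde L_{s,O}(\phi_N),\phi_N\rangle = c_s\langle \Lambda_{\gamma}^{\mathrm{loc}}(\phi_N),\phi_N\rangle$ for the local Dirichlet-to-Neumann map of the conjugated equation, plus a remainder supported away from $x_0$; (iii) apply Theorem \ref{thm:Eucl} to the frozen matrix to get the limit $(c_1+c_2)(C_\alpha(\gamma(x_0))(x_0))^{2s}$ and rewrite $C_\alpha(\gamma(x_0))$ in terms of $\tilde C_\alpha(g)(x_0)$, producing the claimed constant $c_s(c_1+c_2)$.

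The part I expect to be the genuine obstacle is controlling the two kinds of errors simultaneously: the \emph{coefficient-freezing error} (variable $\gamma(x)$ vs. $\gamma(x_0)$) and the \emph{nonlocality/globalization error} (the Euclidean model lives on all of $\R^{n+1}_+$ while the true problem lives on the compact manifold $M\times\R_+$, and the extension solution has a nontrivial tail). For the first, one needs a quantitative Caccioppoli-type estimate for the degenerate equation with weight $x_{n+1}^{1-2s}$ showing that the perturbed solution's energy gain over the frozen one is $o(N^{2s-n/2})$ in the relevant dual pairing; the degeneracy of the weight at $x_{n+1}=0$ makes the trace estimates delicate and is presumably why only $C^2$ regularity (rather than the $C^4$ needed for the Neumann case in Theorem \ref{thm:bdry_reconstr_ext}) is required here — the Dirichlet data can be prescribed exactly. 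For the second, one uses finite speed of "influence" of the localized data together with the exponential-type decay of the modified Bessel functions $K_s, K_{1-s}$ in the normal variable to cut off the tail at scale $O(1)$ and absorb it into the error, the compactness of $M$ ensuring uniformity; here the mean-zero normalization $\int_M f\,dV_g=0$ plays no role on the Dirichlet side but the subtraction of the zero mode in $u^f$ must be tracked when relating $\tilde L_{s,O}$ back to $(-\Delta_g)^s$. Modulo these estimates, the asymptotic constants $c_1,c_2$ come out of the explicit Euclidean computation in the proof of Theorem \ref{thm:Eucl}, and the factor $c_s$ is exactly the Caffarelli--Silvestre normalization from \eqref{eq:const_CS}.
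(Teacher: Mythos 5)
Your overall strategy — pass to the extension problem, localize in a coordinate chart around $x_0$, feed in the oscillatory localized Dirichlet data, use the integration-by-parts (Alessandrini-type) identity to express $\langle \frac{1}{\sqrt{\det g}}\tilde L_{s,O}(\phi_N),\phi_N\rangle$ as a weighted energy pairing, and extract the limit from the Euclidean asymptotics — is exactly the route the paper takes in Section \ref{sec:mfd1}. However, there is a genuine gap in how you treat the volume factor, and it corrupts the constant. In local coordinates, multiplying the bulk equation of \eqref{eq:Dirichlet_Neumann_main} by $\sqrt{\det g}$ gives $\nabla\cdot\bigl(x_{n+1}^{1-2s}\tilde\gamma\nabla\tilde u\bigr)=0$ with $\tilde\gamma=\sqrt{\det g}\,\mathrm{diag}(g^{-1},1)$, i.e.\ the scalar $\sqrt{\det g}$ multiplies the \emph{normal} entry as well, not only the tangential block. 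If, as you propose, you apply Theorem \ref{thm:Eucl} literally with tangential block $\gamma=\sqrt{\det g}\,g^{-1}$ and unit normal coefficient, the limit you obtain is $(c_1+c_2)\bigl(\sqrt{\det g(x_0)}\,g^{-1}(x_0)\alpha\cdot\alpha\bigr)^{s}=\det(g(x_0))^{s/2}\bigl(g^{-1}\alpha\cdot\alpha\bigr)^{s}$, whereas the claimed symbol is $\tilde C_\alpha(g)(x_0)^{2s}=\det(g(x_0))^{1/2}\bigl(g^{-1}\alpha\cdot\alpha\bigr)^{s}$; these differ unless $s=1$, and no ``rewriting'' reconciles them. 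Your proposed fix — absorbing the mismatch ``by an extra scalar rescaling of the oscillatory frequency'' — cannot work: the theorem requires the data $\phi_N$ to be independent of $g$, so the frequency cannot be tuned to the unknown $\det g(x_0)$, and in any case a frequency rescaling does not produce the missing power. The correct mechanism (and the reason the paper states Section \ref{sec:DtN} for the generalized matrix $\tilde\gamma=c(x)\,\mathrm{diag}(\gamma,1)$ with $c=\sqrt{\det g}$) is that the scalar $c$ drops out of the frozen bulk equation but multiplies the quadratic form and the weighted Neumann trace, so the limit is $c(x_0)(c_1+c_2)(g^{-1}(x_0)\alpha\cdot\alpha)^s$ — the determinant enters to the power $1/2$ multiplicatively, and the exponent $1/(2s)$ in \eqref{eq:zero_order_symbol} is merely the bookkeeping needed to pull $c(x_0)$ inside the $s$-th power; this is the content of \eqref{eq:aim_PDE} with $\bar C_\alpha$ from \eqref{eq:constant_weighted}.

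A secondary difference: you freeze the coefficients at $x_0$ and propose to control the freezing error by a perturbation/energy argument, whereas the paper constructs variable-coefficient approximate solutions via the WKB-type hierarchy of Bessel ODEs (Proposition \ref{prop:DtN_approx}) and estimates the remainder $r_N$ with Hardy- and trace-type inequalities. Freezing is plausible at leading order because the coefficients vary by $O(N^{-1/2})$ on the shrinking support, but you leave the key quantitative estimate (that the induced error in the pairing is $o(N^{2s-\frac n2})$, uniformly despite the degenerate weight) unproved, so even apart from the constant issue the argument is not complete as written. The remaining ingredients you list — the extension characterization of $\tilde L_{s,O}$, the exponential decay of $K_s,K_{1-s}$ to cut off the tail, and the uniform $H^s$ bound on $N^{\frac n4-s}\phi_N$ via trace estimates — do match the paper's proof.
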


Having reformulated the reconstruction problem through the degenerate elliptic extensions from \eqref{eq:Neumann_Dirichlet_main} and \eqref{eq:Dirichlet_Neumann_main}, the problem becomes an essentially local problem for a degenerate elliptic operator. In this context, we compute the symbol of the respective operators using and suitably adapting the ideas from \cite{B01} and \cite{NT01} to deduce desired reconstruction results.

\subsection{Applications}

The above boundary reconstruction results entail various further consequences.

\subsubsection{The anisotropic fractional Calder\'on problem without apriori information in the measurement domain}
As a first, direct consequence of the outlined results, as already highlighted above, it follows that in the result from \cite{FGKU21} it is not necessary to assume that the full metric in the underlying domain is known. It is possible to identify it up to the natural gauge invariance. Hence, the theorem from \cite{FGKU21} turns into the following result:

\begin{thm}[Combining \cite{FGKU21} with the boundary reconstruction from above]
\label{thm:FGKU_nonlocal}
Let $s\in (0,1)$, assume that $(M_1,g_1)$, $(M_2, g_2)$ are smooth, closed, connected manifolds of dimension $n\geq 2$. Let $O \subset M_1\cap M_2$ be a non-empty open set. Let $\bar{L}_{s,O}^{1}, \bar{L}_{s,O}^2$ denote the source-to-solution operators associated with the metrics $g_1,g_2$ as in \eqref{eq:source_to_sol_new}. Assume that 

\begin{align*}
\bar{ L}_{s,O}^{1} = \bar{ L}_{s,O}^2.
\end{align*}
 Then there exists a diffeomorphism $\Phi$ such that $\Phi^* g_2 = g_1$.
\end{thm}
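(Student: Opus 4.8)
\textbf{Proof strategy for Theorem \ref{thm:FGKU_nonlocal}.}

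The plan is to decouple the argument into two stages: a \emph{boundary reconstruction} stage, in which we identify the metric on the measurement set $O$ from the weighted source-to-solution data, and an \emph{interior reconstruction} stage, in which we invoke the already-established geometric uniqueness result from \cite{FGKU21}, i.e.\ Theorem \ref{thm:FGKU21}. The point is that Theorem \ref{thm:FGKU21} requires the metric $g|_O$ to be known a priori, and Corollary \ref{cor:metric} (itself a consequence of Theorem \ref{thm:bdry_reconstr_ext_nonloc_source}) supplies precisely this missing piece from the data $\bar L_{s,O}^j$ alone.

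First I would apply Corollary \ref{cor:metric}. By hypothesis $O \subset M_1 \cap M_2$ is a non-empty open set and $\bar L_{s,O}^1 = \bar L_{s,O}^2$, so Corollary \ref{cor:metric} yields $g_1|_O = g_2|_O$. Thus there is a common metric $g := g_1|_O = g_2|_O$ on $O$, and both manifolds now satisfy the hypotheses of Theorem \ref{thm:FGKU21} with this \emph{known} metric $g$ on the measurement set. Second, I would observe that the equality of the weighted operators $\bar L_{s,O}^1 = \bar L_{s,O}^2$ is, once $g_1|_O = g_2|_O$ is known, equivalent to the equality of the unweighted operators $L_{s,O}^1 = L_{s,O}^2$ from \eqref{eq:source_to_sol}: indeed, on $O$ the weight $\sqrt{\det g_1} = \sqrt{\det g_2}$ agrees and is a fixed known smooth positive function, so for $f \in C_c^\infty(O)$ the solutions of $\sqrt{\det(g_j)}(-\Delta_{g_j})^s u^f = f$ and of $(-\Delta_{g_j})^s v^f = f$ are related through this common multiplier on the source side; more precisely, $L_{s,O}^j(f) = \bar L_{s,O}^j\big(\sqrt{\det g}\, f\big)$ restricted to $O$, using that $\sqrt{\det g}\,f \in C_c^\infty(O)$ as well. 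Hence $\bar L_{s,O}^1 = \bar L_{s,O}^2$ forces $L_{s,O}^1 = L_{s,O}^2$.

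Third, with $L_{s,O}^1 = L_{s,O}^2$ and the now-known metric $g$ on $O$, Theorem \ref{thm:FGKU21} applies directly and produces a diffeomorphism $\Phi$ with $\Phi^* g_2 = g_1$, which is the claim. The main obstacle, and the part requiring genuine care rather than routine bookkeeping, is the very first step: ensuring that the boundary reconstruction of Corollary \ref{cor:metric} produces $g_1|_O = g_2|_O$ \emph{pointwise as metrics}, not merely up to a further gauge on $O$. This is exactly what Theorem \ref{thm:bdry_reconstr_ext_nonloc_source} delivers, since the quantity $\tilde C_\alpha(g)(x_0)$ from \eqref{eq:zero_order_symbol} is read off in the fixed ambient coordinates in which $O \subset M_1$ and $O \subset M_2$ are identified, and the bilinear dependence on $\alpha \in \mathbb{S}^{n-1}$ of $\det(g(x_0))^{1/2s} g^{-1}_{j\ell}(x_0)\alpha_j\alpha_\ell$ determines the full matrix $\det(g(x_0))^{1/2s} g^{-1}(x_0)$, and taking determinants then recovers $\det(g(x_0))$ and hence $g(x_0)$ itself. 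One should also note that the regularity hypothesis (smooth manifolds) in Theorem \ref{thm:FGKU_nonlocal} is stronger than the $C^4$ needed for Corollary \ref{cor:metric}, so there is no regularity mismatch between the two stages.
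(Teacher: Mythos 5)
Your proposal is correct and follows essentially the same route as the paper: use the boundary reconstruction result (Theorem \ref{thm:bdry_reconstr_ext_nonloc_source}, via Corollary \ref{cor:metric}) to obtain $g_1|_O = g_2|_O$, and then invoke Theorem 1 of \cite{FGKU21}. Your second step, explicitly converting $\bar L_{s,O}^j$ into $L_{s,O}^j$ through the now-known weight $\sqrt{\det g}$ via $L_{s,O}^j(f) = \bar L_{s,O}^j(\sqrt{\det g}\,f)|_O$, is a detail the paper only remarks on in the introduction (the two operators carry the same information once $g|_O$ is known) rather than in its proof, but it matches the intended argument.
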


An analogous observation also holds in the solution-to-source setting from Proposition 1.1 in \cite{R23}. 
\begin{thm}[Combining Proposition 1.1.~in \cite{R23} with the boundary reconstruction from above]
\label{thm:FGKU_nonlocal1a}
Let $s\in (0,1)$, assume that $(M_1,g_1)$, $(M_2, g_2)$ are smooth, closed, connected manifolds of dimension $n\geq 2$. Let $O \subset M_1\cap M_2$ be a non-empty open set. Let $\tilde{L}_{s,O}^{1}, \tilde{L}_{s,O}^2$ denote the solution-to-source operators associated from \eqref{eq:sol_to_source} with the metrics $g_1,g_2$ and assume that $\tilde{ L}_{s,O}^{1} = \tilde{ L}_{s,O}^2$. Then there exists a diffeomorphism $\Phi$ such that $\Phi^* g_2 = g_1$.
\end{thm}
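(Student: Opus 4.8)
The plan is to obtain the statement by concatenating two results that are, by the point at which Theorem~\ref{thm:FGKU_nonlocal1a} is stated, already available: the boundary reconstruction contained in Corollary~\ref{cor:metric2}, which recovers the metric on the measurement set from the solution-to-source data, and the geometric uniqueness result of Proposition~1.1 in \cite{R23}, which is the solution-to-source analogue of Theorem~\ref{thm:FGKU21} and upgrades the knowledge of the metric on $O$, together with the measurement operator, to uniqueness of $(M,g)$ up to the diffeomorphism gauge. This mirrors exactly the way Theorem~\ref{thm:FGKU_nonlocal} is meant to follow from Corollary~\ref{cor:metric} and Theorem~\ref{thm:FGKU21}.

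First, I would apply Corollary~\ref{cor:metric2} verbatim to the hypothesis $\tilde{L}_{s,O}^1 = \tilde{L}_{s,O}^2$ (under the implicit identification $O_1 = O_2 =: O$; the manifolds here are smooth, hence in particular $C^2$, so the corollary applies). This yields $g_1|_O = g_2|_O$; write $g$ for this common metric on $O$ and $\omega := \sqrt{\det g}$ for the associated volume density, a strictly positive smooth function on $O$ that is intrinsic to the measurement data.

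Next, I would remove the volume weight from the measurement operators so as to match the setting of \cite{R23}. For $u \in C_c^\infty(O)$ one has, on $O$,
\[
\tilde{L}_{s,O}^j(u) \;=\; \sqrt{\det(g_j)}\,(-\Delta_{g_j})^s u\big|_O \;=\; \omega\,(-\Delta_{g_j})^s u\big|_O ,
\]
since $g_j|_O = g$ forces $\sqrt{\det(g_j)} = \omega$ on $O$. Dividing by the now common, strictly positive weight $\omega$, the assumed equality $\tilde{L}_{s,O}^1 = \tilde{L}_{s,O}^2$ is equivalent to $(-\Delta_{g_1})^s u|_O = (-\Delta_{g_2})^s u|_O$ for all $u \in C_c^\infty(O)$, i.e.\ the (unweighted) solution-to-source maps associated with $g_1$ and $g_2$ on $O$ coincide, and we now additionally know $g_1|_O = g_2|_O = g$. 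This is precisely the hypothesis of Proposition~1.1 in \cite{R23}, so invoking that proposition produces a diffeomorphism $\Phi$ with $\Phi^* g_2 = g_1$, as claimed.

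The main point requiring care---though I do not expect it to be a genuine obstacle---is the bookkeeping in matching the reformulated data to the precise hypotheses of Proposition~1.1 in \cite{R23}: one should check that the normalization conventions agree (for instance the mean-zero condition $(u,1)_{L^2(M,dV_g)}=0$ built into \eqref{eq:sol_to_source}, together with whatever gauge/normalization \cite{R23} adopts), that the equality of operators is genuinely over the same space $C_c^\infty(O)$ after dividing out $\omega$, and that the regularity assumptions of \cite{R23} (smooth manifolds) are met, which they are. Once this is verified, the proof is the direct composition of Corollary~\ref{cor:metric2} with the cited geometric uniqueness theorem.
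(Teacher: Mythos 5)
Your proposal is correct and follows essentially the same route as the paper: the paper proves Theorem \ref{thm:FGKU_nonlocal1a} by first invoking the solution-to-source boundary reconstruction (Theorem \ref{thm:bdry_reconstr_ext_nonloc_sol}, i.e.\ Corollary \ref{cor:metric2}) to obtain $g_1|_O = g_2|_O$ and then applying Proposition~1.1 of \cite{R23}, exactly as you do. Your explicit step of dividing out the now-common weight $\sqrt{\det g}$ to pass from the weighted operator $\tilde{L}_{s,O}$ to the unweighted solution-to-source map of \cite{R23} is a correct piece of bookkeeping that the paper leaves implicit (it is the observation from the introduction that the two operators carry the same information once $g|_O$ is known).
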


\subsubsection{Relating the local and nonlocal Calder\'on problems without apriori information on the metric on the measurement domain}
Similar consequences also immediately apply to the relation between the local and nonlocal source-to-solution problems from \cite[Theorem 2]{R23}. Here it was proved that given the knowledge of $(O,g|_{O})$ for some open set $O$, then the nonlocal source-to-solution data determine the local source-to-solution data. With the results from above, it is no longer necessary to assume that the apriori knowledge of $g|_{O}$ is given.

\begin{thm}[Combining Theorem 2 from \cite{R23} with the boundary reconstruction from above]
\label{thm:local_nonlocal1}
Let $s\in (0,1)$.
Let $(M,g)$ be a smooth, closed, connected Riemannian manifold of dimension $n\geq 2$. Let $O \subset M$ be an open set. 
Let $L_{1,O}$ denote the source-to-solution map
\begin{align*}
\tilde{H}^{-1}(O) \ni f \mapsto v^f|_{O} \in H^1(O), 
\end{align*}
where $v^f $ with $(v^f, 1)_{L^2(M,dV_g)}=0$ is a solution to $(-\D_g) v^f = f$ in $(M,g)$ with $(f,1)_{L^2(M,dV_g)}=0$. Further, let $\bar{L}_{s,O}$ denote the source-to-solution map from \eqref{eq:source_to_sol_new}. 
Then, the knowledge of $\bar{L}_{s,O}$ determines $g|_O$ and $L_{1,O}$. 
More precisely, we have the following results:
\begin{itemize}
\item[(i)] 
Let $f\in C_c^{\infty}(O)$ with $(f,1)_{L^2(M,dV_g)}=0$ and let $\tilde{u}^f \in \dot{H}^1(M \times \R_+, x_{n+1}^{1-2s})$ with the property that $(\tilde{u}^f(\cdot, x_{n+1}),1)_{L^2(M, dV_g)}=0$ for all $x_{n+1} \in \R_+$ denote the  extension of $u^f$ (as in \eqref{eq:Dirichlet_Neumann_main}) and assume that $u^f$ with $(u^f, 1)_{L^2(M,dV_g)}=0$ solves $\sqrt{\det(g)}(-\D_g)^s u^f = f$ in $M$. Then, for any $f\in C_c^{\infty}(O)$, the function
\begin{align*}
\int\limits_{0}^{\infty} t^{1-2s} \tilde{u}^f(x,t) dt
\end{align*}
can be reconstructed from the pair $(f,\bar{L}_{s,O}(f))$ .
\item[(ii)]
With the notation as in (i), the following density result holds:
\begin{align*}
&\overline{\left\{ \left( f, \int\limits_{0}^{\infty} t^{1-2s} \tilde{u}^f(x,t)dt|_{O} \right) \in \tilde{H}^{-1}(O)\times H^1(O): \  f\in C_{c,\diamond}^{\infty}(O)  \right\}}^{\tilde{H}^{-1}(O)\times H^1(O)} \\
&= \{(f,L_{1,O}f): \ f\in \tilde{H}^{-1}(O) \mbox{ with } (f,1)_{L^2(M,dV_g)}=0\}.
\end{align*}
Here we have used the notation $C_{c,\diamond}^{\infty}(O):= \{f\in C_c^{\infty}(O): (f,1)_{L^2(M, dV_g)}=0\}$.
\end{itemize}
\end{thm}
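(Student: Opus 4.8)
The plan is to reduce the statement to \cite[Theorem~2]{R23}: the only apriori input needed there, beyond the nonlocal data, was the knowledge of $(O,g|_O)$, and this is exactly what the boundary reconstruction established above now supplies. Accordingly, the first step is to apply Corollary~\ref{cor:metric} (equivalently Theorem~\ref{thm:bdry_reconstr_ext_nonloc_source}): from $\bar L_{s,O}$ alone one reconstructs $g|_O$, and hence the volume form $dV_g$, the weight $\sqrt{\det g}$, and the spaces $\tilde H^{-1}(O)$, $H^1(O)$ on $O$. Once this is done, parts (i) and (ii) are precisely the conclusion of \cite[Theorem~2]{R23}, whose argument I recall in the two following paragraphs.

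For (i), recall from Section~\ref{sec:mfd} that the extension $\tilde u^f$ of $u^f$ from \eqref{eq:Dirichlet_Neumann_main} (equivalently \eqref{eq:Neumann_Dirichlet_main}) solves the degenerate elliptic bulk equation $(\partial_{n+1}x_{n+1}^{1-2s}\partial_{n+1}+x_{n+1}^{1-2s}\Delta_g)\tilde u^f=0$ on $M\times\R_+$ with mean-zero slices and $\tilde u^f(\cdot,0)=u^f$. Over the measurement set, \emph{both} components of the Cauchy data of $\tilde u^f$ along the relatively open boundary piece $O\times\{0\}$ are then at our disposal: the Dirichlet trace equals $\bar L_{s,O}(f)$, and the generalized Neumann trace equals $(c_s\sqrt{\det g})^{-1}f$ there, which is computable because $f$ and, by the first step, $\sqrt{\det g}|_O$ are known. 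Since $M\times\R_+$ is connected, the weak unique continuation principle for the Caffarelli--Silvestre operator (as used in \cite{FGKU21,R23}) together with a Runge-type approximation argument shows that $\tilde u^f$ is uniquely and constructively determined on all of $M\times\R_+$ by this Cauchy pair. Hence $w_f(x):=\int_0^\infty t^{1-2s}\tilde u^f(x,t)\,dt$ is recovered from $\bigl(f,\bar L_{s,O}(f)\bigr)$, which is (i).

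For (ii), integrating the bulk equation in the extension variable over $(0,\infty)$, and using the (exponential) decay of the mean-zero part of $\tilde u^f$ as $x_{n+1}\to\infty$ together with the generalized Neumann datum at $x_{n+1}=0$, one obtains that $w_f$ solves a \emph{local} elliptic problem on $M$ of the form $\sqrt{\det g}\,\Delta_g w_f=c_s^{-1}f$ (modulo the mean-zero normalization). In other words, up to the explicit constant and the weight $\sqrt{\det g}$, which is known on $O$ by the first step, $w_f$ is the local source-to-solution image of $f$; this is precisely the place where the volume-form weighting in the definition \eqref{eq:source_to_sol_new} of $\bar L_{s,O}$ is used, since it is what turns the $t$-average of the extension into a solution of an honest local Poisson equation. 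Combining this with the density of the admissible $f$'s in the mean-zero subspace of $\tilde H^{-1}(O)$ and with the boundedness of $L_{1,O}\colon\tilde H^{-1}(O)\to H^1(O)$, a density and continuity argument — carried out in \cite[Theorem~2]{R23} — identifies the closure of the set of pairs $\bigl(f,w_f|_O\bigr)$ with the graph of $L_{1,O}$ over the mean-zero subspace. The appearance of the closure, rather than the raw set of pairs, reflects the ill-posedness inherited from the lateral Cauchy continuation used in (i).

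The only genuinely new ingredient compared with \cite{R23} is the first step — the constructive recovery of $g|_O$ from $\bar L_{s,O}$ — which is Corollary~\ref{cor:metric}. The remaining work is exactly that already done in \cite{R23}: the unique continuation and Runge approximation for the \emph{degenerate} operator $\partial_{n+1}x_{n+1}^{1-2s}\partial_{n+1}+x_{n+1}^{1-2s}\Delta_g$, whose weight $x_{n+1}^{1-2s}$ vanishes (resp.\ blows up) at $x_{n+1}=0$, and the careful tracking of the volume-form weighting and of the various mean-zero normalizations. I expect this last bookkeeping of weights and normalizations — rather than any new analytic estimate — to be the most delicate part of writing out the details; all the tools are available from \cite{CS07,ST10,R23} and from the boundary reconstruction above.
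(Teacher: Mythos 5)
Your proposal follows exactly the paper's route: first recover $g|_O$ from $\bar{L}_{s,O}$ via the boundary reconstruction (Corollary \ref{cor:metric}, i.e.\ Theorems \ref{thm:bdry_reconstr_ext_nonloc_source} and \ref{thm:bdry_reconstr_ext}), and then invoke Theorem 2 of \cite{R23}, whose only missing hypothesis was the apriori knowledge of $(O,g|_O)$, to obtain (i) and (ii). The paper's proof is precisely this two-step reduction (stated even more briefly), so your additional recap of the unique continuation/Runge and $t$-integration arguments from \cite{R23} is consistent but not required.
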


As a consequence of Theorem \ref{thm:local_nonlocal1}, also any uniqueness result for the classical, local Calder\'on problem transfers to the analogous nonlocal one without any apriori knowledge of the metrics restricted to the measurement set.

As pointed out in Remark \ref{rmk:Eucl} analogous variants of the above results also hold in the unbounded, asymptotically Euclidean setting from \cite{CGRU23}.

\subsection{Relation to the literature}
\label{sec:lit}

Let us put the above results into the context of the associated literature on the fractional Calder\'on problem. The fractional Calder\'on problem had been introduced in the seminal article \cite{GSU20} in which also a first partial data uniqueness result for a Schrödinger variant of it with constant coefficients in the principal part was proved. Subsequent work dealt with uniqueness and stability in critical function spaces \cite{RS20, RS18, BCR24}, uniqueness of lower order terms in the presence of known anisotropic background metrics \cite{GLX17}, single measurement reconstruction and stability results \cite{GRSU20, R21} as well as reconstruction by monotonicity methods \cite{HL20, HL20a}. Moreover, in \cite{C20} a generalized Liouville transform was introduced allowing one to pass from a generalized conductivity to a Schrödinger version of the problem. Further related equations and reconstruction questions for lower order contributions, for instance, have been studied in \cite{RS19,LLR20,L21,L23,CdHS22,BGU21,CLR20,CR21,CdHS24}. We further highlight the exterior reconstruction result from \cite{CRZ22} which transfers some of the ideas from \cite{KV84} into the isotropic exterior data setting. We emphasize that the literature on the fractional Calder\'on problem is extensive by now and that this list is not exhaustive and that many further references can be found in the literature cited in these articles.

A novel, more recent strand of research on the fractional Calder\'on problem deals with reconstructing its principal part. This was proved in the seminal work \cite{FGKU21}, see also \cite{F24} for the first approach initiating this analysis. The ideas have been extended to various geometric contexts \cite{QU22,C23} and have been revisited by means of an extension perspective in \cite{R23}. Moreover,  \cite{FKU24} considers both the unique recovery of  an anisotropic metric and a potential. We also refer to the work \cite{FL24} for related entanglement principles for nonlocal operators on Riemannian manifolds. We further point to the very recent work \cite{FGKRSU25} in which an anisotropic metric is reconstructed from exterior measurements. Related to these works, also connections between local and nonlocal, isotropic or anisotropic Calder\'on problems have been established recently \cite{GU21,CGRU23,LLU23,R23,L24,LNZ24} in which it is shown that local uniqueness results can be transferred to nonlocal ones. It is the geometric strand of ideas to which our present article relates, by seeking to complement the interior uniqueness results from the literature by ``boundary reconstruction properties'' in isotropic or anisotropic settings. Here we are strongly inspired by the works \cite{B01, NT01, KV84, KV84, SU88} which we adapt to our degenerate elliptic contexts.

\subsection{Outline of the article}
\label{sec:outline}

The remainder of the article is structured as follows: After a short preliminaries section, in Section \ref{sec:DtN} we discuss the proofs of Theorems \ref{thm:Eucl}, \ref{thm:bdry_reconstr_ext_nonloc_sol} and \ref{thm:bdry_reconstr_ext_NtD}. The key step here is a suitable adaptation of the ideas of Brown \cite{B01} and Nakamura-Tanuma \cite{NT01} to our degenerate elliptic setting. This proceeds by first computing an approximate solution to our equation in Section \ref{sec:DtN_approx_proof} and then deducing suitable error bounds in Section \ref{sec:error_bounds_anisotropic} for the remainder term. Combined, this yields the result of Theorem \ref{thm:Eucl},  and, in Section \ref{sec:mfd1}, with an extension perspective it also provides the proofs of Theorems \ref{thm:bdry_reconstr_ext_nonloc_sol} and \ref{thm:bdry_reconstr_ext_NtD}.

In Section \ref{sec:NtD} we then show that this procedure can also be carried out for the setting of source-to-solution or Neumann-to-Dirichlet measurements and provide the proofs for Theorems \ref{thm:bdry_reconstr_ext_nonloc_source} and \ref{thm:bdry_reconstr_ext}. 

Finally, in Section \ref{sec:consequences} we discuss some applications of the boundary reconstruction results. In particular, we present the proof of Theorem \ref{thm:FGKU_nonlocal}.

\section{Preliminaries}

In this section, we collect some of the notation which we will use in what follows below. Most of this is standard notation, we recall it for the convenience of the reader and to introduce the specific conventions which we will use below. We will always assume that $n\geq 2$.

\subsection{Sets and approximation}\label{subsec:Preli}
We begin by recalling our notational conventions for sets.
\begin{itemize}
\item We use the convention that $0$ is not contained in $\N$ and use the notation $\N_0:= \N \cup \{0\}$.
\item We use the notation $\R^{n+1}_+:=\{(x,x_{n+1}) \in \R^{n+1}: \ x_{n+1}>0\}$. When working with points or vectors in $\R^{n+1}_+$, we will usually refer to them as $(x,x_{n+1})$ with $x \in \R^n$ denoting the tangential and $x_{n+1}$ the normal component. In what follows below, we will often identify $\R^n \times \{0\}$ with $\R^n$. We denote the Lebesgue measure on $\R^{n+1}_+$ by $dx dx_{n+1}$. With slight abuse of notation, we also omit it in integrals if there is no danger of confusion in the integration variables.
\item In what follows below, we will always assume that $(M,g)$ is a closed, connected Riemannian manifold of at least $C^2$  regular in Section \ref{sec:DtN} and of at least $ C^4 $ regular in Section \ref{sec:NtD}. Considering the Laplace-Beltrami operator $-\D_g$ on $(M,g)$, we denote its eigenvalues and eigenfunctions by $0=\lambda_0<\lambda_1 \leq \dots$ and $(\phi_k)_{k \in \N_0} $. Moreover, we denote the volume form on $(M,g)$ by $d V_g$.
\end{itemize}

In estimating the quality of our approximation results, in what follows below, we will make use of asymptotic behaviour and also use the $O$ and $o$ notation. More preicsely, we will use the following conventions:
\begin{itemize}
\item We will write $w(t ) = O(v(t))$ as $t \rightarrow c$ for $c \in \R \cup \{\pm \infty\}$ iff $\left|\frac{w(t)}{v(t)} \right| \leq C<\infty$ for $t \rightarrow c$. Similarly, we say $w(t) = o(v(t))$ as $t \rightarrow c$ for  $c \in \R \cup \{\pm \infty\}$ iff $\frac{w(t)}{v(t)} \rightarrow 0$ for $t \rightarrow c$.
\item We will also use the notation $w(t) \sim v(t)$ as $t \rightarrow c$ for $c \in \R \cup \{\pm \infty\}$ iff $\frac{w(t)}{v(t)} \rightarrow 1$ for $t \rightarrow c$.
\end{itemize}

\subsection{Function spaces and well-posedness}
In what follows below, we will mainly rely on standard Sobolev spaces. In addition to this, we will also use Sobolev spaces which are naturally associated with the extension perspective which we adopt in this article.

\begin{itemize}
\item For $s \geq 0$ we define 
\begin{align*}
H^{s}(M):=\{v: M \rightarrow \R: \ \|v\|_{H^{s}(M, dV_g)}<\infty\}.
\end{align*}
Here, $\|v\|_{H^{s}(M, dV_g)}^2:=\sum\limits_{k =0}^{\infty} (1+\lambda_k)^{2s} |(v, \phi_k)_{L^2(M, dV_g)}|^2$ and $(\phi_k)_{k \in \N_0}$, $(\lambda_k)_{k \in \N_0}$ denote the eigenfunctions and eigenvalues of the Laplace-Beltrami operator $-\D_g$ on $(M,g)$. 
For $s<0$, we similarly set 
\begin{align*}
H^{s}(M):= \mbox{ closure of } C^{\infty}( M ) \mbox{ with respect to } \|v\|_{H^{s}(M, d V_g)},
\end{align*}
where $\|v\|_{H^{s}(M, d V_g)}^2:= \sum\limits_{k = 0}^{\infty} (1+\lambda_k)^{2s}|(v, \phi_k)_{L^2(M, dV_g)}|^2$.
\item We further consider the following local Sobolev spaces. For $O \subset M$ Lipschitz regular and for $s \in \R$ we define $ \tilde{H}^s(O) $ to be the completion of $ C^\infty_c(O) $ with respect to the norm $ \|\cdot\|_{H^s(M, dV_g)} $. 
\item 
Since our measurements are localized to an open subset $O \subset M$, we also work with the spaces $H^s(O):=\{v|_O: \ v \in H^s(M)\}$ endowed with the quotient norm $\|v\|_{H^s(O, dV_g)}:= \inf\{ \|u\|_{H^s(M, dV_g)}: \ u \in H^s(M), \ u|_O = v \}$.
\item We further recall the homogeneous Sobolev spaces $\dot{H}^{-s}(\R^n)$ and $\dot{H}^s(\R^n)$. For $ s\in (-1,1) $, we let 
\[\dot{H}^s(\R^n):=\text{closure of }C^\infty_c(\R^n) \text{ with respect to }\|\cdot\|_{\dot{H}^s(\R^n)}, \]
where $ \|f\|_{\dot{H}^s(\R^n)}^2:=\int_{\R^n}|\xi|^{2s}|\hat{f}(\xi)|^2d\xi $ and $ \hat{f}(\xi):= \int\limits_{\R^n} e^{-i x \cdot \xi} f(x) dx $ denotes the Fourier transform of $ f $.
\end{itemize}
We recall that for $s\in \R$ we have $(H^{s}(M))^{\ast} = H^{-s}(M)$ and that under the above regularity assumptions on $O$ it also holds that $(\tilde{H}^{s}(O))^{\ast} = H^{-s}(O)$ for all $s \in \R$. The duality pairing between $ H^{-s}(M) $ and $ H^s(M) $ extends the $ L^2(M) $ product of smooth functions and is given by 
\begin{align*}
\inp{u,v}_{H^{-s}(M, dV_g), H^s(M, dV_g)}:=\sum_{k= 0}^ \infty(u,\phi_k)_{L^2(M, dV_g)}\overline{(v,\phi_k)_{L^2(M, dV_g)}},\\
\mbox{ for any }u\in H^{-s}(M) \cap C^{\infty}(M),\:v\in H^s(M)\cap C^{\infty}(M). 
\end{align*}

Associated with the extension perspective, we consider the following weighted spaces.
\begin{itemize}
\item The natural energy space associated with the extension problem \eqref{eq:main_one1_intro} is given by
\begin{align*}
\dot{H}^1(\R^{n+1}_+,x_{n+1}^{1-2s}):= \mbox{ closure of $C_c^{\infty}(\overline{\R^{n+1}_+})$ with respect to } \|\cdot\|_{\dot{H}^1(\R^{n+1}_+, x_{n+1}^{1-2s})}, 
\end{align*}
where $\|\tilde{u}\|_{\dot{H}^1(\R^{n+1}_+, x_{n+1}^{1-2s})}:= \|x_{n+1}^{\frac{1-2s}{2}} \nabla \tilde{u} \|_{L^2(\R^{n+1}_+)}$ and 
\begin{align*}
C_c^{\infty}(\overline{\R^{n+1}_+}):=\{\tilde{u} \in C^{\infty}(\R^{n+1}) \mbox{ with } \supp(\tilde{u}) \subset \overline{\R^{n+1}_+} \mbox{ compact}\}. 
\end{align*}
In particular, functions in $C_c^{\infty}(\overline{\R^{n+1}_+})$ do not need to vanish on $\R^{n} \times \{0\}$. We recall that this space is a Hilbert space with the scalar product 
\begin{align*}
(\tilde{u},\tilde{v})_{\dot{H}^1(\R^{n+1}_+, x_{n+1}^{1-2s})} := \int\limits_{\R^{n+1}_+} x_{n+1}^{1-2s} \nabla \tilde{u} \cdot \nabla \tilde{v} dx dx_{n+1},
\end{align*}
 and that the following trace and Sobolev-trace inequalities (see \cite[Sections 2.1 and 2.2]{BCPS13}) are valid
 \begin{align}
 \label{eq:trace}
 \begin{split}
 &\|\tilde{u}(\cdot,0)\|_{\dot{H}^{s}(\R^n)} \leq C \|x_{n+1}^{\frac{1-2s}{2}} \nabla \tilde{u} \|_{L^2(\R^{n+1}_+)}, \\
 &\|\tilde{u}(\cdot,0)\|_{L^{\frac{2n}{n-2s}}(\R^n)} \leq C \|x_{n+1}^{\frac{1-2s}{2}} \nabla \tilde{u} \|_{L^2(\R^{n+1}_+)}.
 \end{split}
 \end{align}
The quantities on the left hand side are understood in a trace sense.
\item The space $\dot{H}^1_0(\R^{n+1}_+, x_{n+1}^{1-2s})$ is defined as the closure of $C_c^{\infty}(\R^{n+1}_+)$ with respect to the seminorm $\|\tilde{u}\|_{\dot{H}^1(\R^{n+1}_+, x_{n+1}^{1-2s})}$. Together with the bilinear form $(\cdot,\cdot)_{\dot{H}^1(\R^{n+1}_+, x_{n+1}^{1-2s})}$ it becomes a Hilbert space. By virtue of the trace estimates \eqref{eq:trace}, we have that $\tilde{u}(x,0) = 0$ in a trace sense for all $\tilde{u} \in \dot{H}^1_0(\R^{n+1}_+, x_{n+1}^{1-2s})$.
 \item Occasionally, we will also work with the associated inhomogeneous spaces
 \begin{align*}
H^1(\R^{n+1}_+,x_{n+1}^{1-2s}):= \left\{ \tilde{u}: \R^{n+1}_+ \rightarrow \R: \ \|\tilde{u}\|_{H^1(\R^{n+1}_+, x_{n+1}^{1-2s})} < \infty \right\}, 
\end{align*}
where $\|\tilde{u}\|_{H^1(\R^{n+1}_+, x_{n+1}^{1-2s})}^2 := \|x_{n+1}^{\frac{1-2s}{2}}  \tilde{u} \|_{L^2(\R^{n+1}_+)}^2 + \|x_{n+1}^{\frac{1-2s}{2}} \nabla \tilde{u} \|_{L^2(\R^{n+1}_+)}^2 $. Together with the scalar product
\begin{align*}
(\tilde{u},\tilde{v})_{H^1(\R^{n+1}_+, x_{n+1}^{1-2s})} := \int\limits_{\R^{n+1}_+} x_{n+1}^{1-2s} \nabla \tilde{u} \cdot \nabla \tilde{v} dx dx_{n+1} + \int\limits_{\R^{n+1}_+} x_{n+1}^{1-2s} \tilde{u} \tilde{v} dx dx_{n+1},
\end{align*}
this also becomes a Hilbert space.
\item In Section \ref{sec:NtD}, for $x_0 \in \R^n$, we will also use the space $C^{\infty}_{x_0,\diamond}(\R^n ):=\{f\in C_c^{\infty}(B_1(x_0)): \ \int\limits_{\R^n} f dx = 0\}$, to obtain well-definedness of the generalized Neumann-to-Dirichlet operator.
\end{itemize}
A minimization problem or the lemma of Lax-Milgram yields the well-posedness of the problem \eqref{eq:main_one1_intro}.

\begin{prop}[Well-posedness]
\label{prop:well-posed}
Let $f\in H^{s}(\R^n)$. Then there exists a unique weak solution $\tilde{u}^f \in \dot{H}^{1}(\R^{n+1}_+, x_{n+1}^{1-2s})$ of \eqref{eq:main_one1_intro}. Moreover, there exists a constant $C>0$ such that any weak solution satisfies the following energy estimates
\begin{align}
\label{eq:apriori_est_CS}
\|x_{n+1}^{\frac{1-2s}{2}} \nabla \tilde{u}^f\|_{L^2(\R^{n+1}_+)} \leq C \|f\|_{H^{s}(\R^n)}.
\end{align}
\end{prop}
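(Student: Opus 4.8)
The plan is to establish existence and uniqueness via the Lax--Milgram lemma and then read off the energy estimate from the coercivity constant. First I would set up the bilinear form
\[
B[\tilde u, \tilde v] := \int_{\R^{n+1}_+} x_{n+1}^{1-2s}\, \tilde\gamma \nabla \tilde u \cdot \nabla \tilde v\, dx\, dx_{n+1}
\]
on the Hilbert space $\dot H^1(\R^{n+1}_+, x_{n+1}^{1-2s})$. Since $f \in H^s(\R^n) = \dot H^s(\R^n)$ up to lower order (or, more carefully, since only the homogeneous seminorm enters the estimate and the trace inequality \eqref{eq:trace} controls $\dot H^s$), I would reduce the Dirichlet problem \eqref{eq:main_one1_intro} to an affine problem: pick any $\tilde\Phi \in \dot H^1(\R^{n+1}_+, x_{n+1}^{1-2s})$ with $\tilde\Phi(\cdot,0) = \phi$ (here I take $f$ to play the role of $\phi$, i.e. the boundary datum), whose existence follows from the surjectivity of the trace operator implicit in \eqref{eq:trace}, and seek $\tilde w \in \dot H^1_0(\R^{n+1}_+, x_{n+1}^{1-2s})$ with $B[\tilde w, \tilde v] = -B[\tilde\Phi, \tilde v]$ for all test functions $\tilde v$; then $\tilde u^f := \tilde w + \tilde\Phi$ is the desired solution.

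The key analytic input is that $B$ is bounded and coercive on $\dot H^1_0(\R^{n+1}_+, x_{n+1}^{1-2s})$: boundedness is immediate from $\|\tilde\gamma\|_{L^\infty} < \infty$, and coercivity, $B[\tilde v, \tilde v] \geq \lambda \|x_{n+1}^{(1-2s)/2}\nabla \tilde v\|_{L^2}^2$, follows from the uniform ellipticity of $\tilde\gamma$ with $\lambda$ the lower ellipticity constant. The right-hand side $\tilde v \mapsto -B[\tilde\Phi, \tilde v]$ is a bounded linear functional on $\dot H^1_0$ with norm $\leq \|\tilde\gamma\|_{L^\infty}\|x_{n+1}^{(1-2s)/2}\nabla\tilde\Phi\|_{L^2}$, so Lax--Milgram yields a unique $\tilde w$ with $\|x_{n+1}^{(1-2s)/2}\nabla\tilde w\|_{L^2} \leq \lambda^{-1}\|\tilde\gamma\|_{L^\infty}\|x_{n+1}^{(1-2s)/2}\nabla\tilde\Phi\|_{L^2}$. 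Combining with the triangle inequality and taking the infimum over extensions $\tilde\Phi$ of $\phi$, together with the bound $\inf_{\tilde\Phi}\|x_{n+1}^{(1-2s)/2}\nabla\tilde\Phi\|_{L^2} \leq C\|\phi\|_{\dot H^s(\R^n)} \leq C\|\phi\|_{H^s(\R^n)}$ coming from a bounded right inverse of the trace map (e.g. the Caffarelli--Silvestre Poisson extension with coefficient $\gamma$ frozen, or simply the standard $x_{n+1}^{1-2s}$-harmonic extension), gives \eqref{eq:apriori_est_CS}. Uniqueness follows since two solutions differ by an element of $\dot H^1_0$ that is $B$-harmonic, hence zero by coercivity.

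The main obstacle, and the only genuinely non-routine point, is the construction of a bounded right inverse of the trace operator, i.e. showing that every $\phi \in H^s(\R^n)$ admits an extension $\tilde\Phi \in \dot H^1(\R^{n+1}_+, x_{n+1}^{1-2s})$ with $\|x_{n+1}^{(1-2s)/2}\nabla\tilde\Phi\|_{L^2} \lesssim \|\phi\|_{H^s(\R^n)}$; the trace estimates quoted in \eqref{eq:trace} give only the forward direction. This is, however, classical: the Caffarelli--Silvestre extension $\tilde\Phi(x,x_{n+1}) = (P_{x_{n+1}} * \phi)(x)$ with the explicit Poisson-type kernel realises exactly such a bounded right inverse on $\dot H^s(\R^n)$, and since $C_c^\infty(\R^n)$ is dense and $H^s = \dot H^s$ on the relevant scale (or one truncates to handle the inhomogeneous norm), the estimate transfers. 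One should also note a minor subtlety: for $s \in (0,1)$ the homogeneous space $\dot H^s(\R^n)$ embeds into $L^{2n/(n-2s)}(\R^n)$ and the solution is determined only modulo the trivial kernel, so ``unique weak solution'' is to be read in $\dot H^1(\R^{n+1}_+, x_{n+1}^{1-2s})$ with its seminorm; no further normalization is needed because the seminorm is already a norm on this space thanks to the trace/Sobolev inequalities.
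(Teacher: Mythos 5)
Your argument is correct, and it rests on the same two pillars as the paper's proof: uniform ellipticity of $\tilde{\gamma}$ and the fact that the constant-coefficient Caffarelli--Silvestre extension of $f$ provides an element of $\dot{H}^1(\R^{n+1}_+,x_{n+1}^{1-2s})$ with trace $f$ and weighted energy controlled by $\|f\|_{\dot{H}^s(\R^n)}\leq \|f\|_{H^s(\R^n)}$ --- which is precisely the ``bounded right inverse of the trace'' you single out as the only non-routine point, and which the paper imports from \cite{CS07}. The mechanics differ slightly: the paper runs the direct method on the minimization problem $\inf\{\int x_{n+1}^{1-2s}\tilde{\gamma}\nabla\tilde v\cdot\nabla\tilde v:\ \tilde v(\cdot,0)=f\}$ and obtains \eqref{eq:apriori_est_CS} by comparing this infimum, via the two-sided ellipticity bounds, with the corresponding infimum for $\gamma=\mathrm{Id}$, whose minimizer is the Caffarelli--Silvestre extension itself; you instead lift the boundary datum, solve the homogeneous problem on $\dot{H}^1_0(\R^{n+1}_+,x_{n+1}^{1-2s})$ by Lax--Milgram, and get the estimate from coercivity plus the triangle inequality and the infimum over liftings. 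The paper's comparison-of-infima is a bit shorter (no decomposition $\tilde u=\tilde w+\tilde\Phi$, no explicit coercivity step), while your route has the advantage of making uniqueness explicit --- in the paper it is only implicit through strict convexity of the quadratic functional, even though the proposition asserts it --- and of extending verbatim to non-symmetric or non-variational perturbations. Your closing remarks (that $\|\cdot\|_{\dot H^s}\leq\|\cdot\|_{H^s}$ suffices, and that the seminorm is a genuine norm on $\dot H^1(\R^{n+1}_+,x_{n+1}^{1-2s})$ thanks to the trace/Sobolev-trace inequalities \eqref{eq:trace}) are accurate and dispose of the only small technical subtleties.
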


\begin{proof}
We consider the minimization problem
\begin{align*}
\inf\left\{ \int\limits_{\R^{n+1}_+}  x_{n+1}^{1-2s} \tilde{\gamma} \nabla \tilde{v} \cdot \nabla \tilde{v} dx dx_{n+1}: \ \tilde{v}(\cdot,0) =f \mbox{ on } \R^n \right\}.
\end{align*}
The identity on the boundary holds in a trace sense for which we use the trace estimates from \eqref{eq:trace}.
The non-emptiness of the set of admissible competitors follows from the solvability of the constant coefficient problem from \cite{CS07}. The existence of a minimizer $\tilde{u}^f$ follows from the direct method. It, hence, remains to deduce the claimed estimate.
To this end, we recall that for $\gamma = Id$, the minimizer $\tilde{v}^f$ from \cite{CS07} satisfies the bound \eqref{eq:apriori_est_CS}. As a consequence, 
\begin{align*}
0 &\leq \|x_{n+1}^{\frac{1-2s}{2}} \nabla \tilde{u}^f\|_{L^2(\R^{n+1}_+)}^2
= \inf\left\{ \int\limits_{\R^{n+1}_+}  x_{n+1}^{1-2s} \tilde{\gamma} \nabla \tilde{v} \cdot \nabla \tilde{v} dx dx_{n+1}: \ \tilde{v}(\cdot,0) =f \mbox{ on } \R^n \right\} \\
&\leq C_{\gamma}\inf\left\{ \int\limits_{\R^{n+1}_+}  x_{n+1}^{1-2s}| \nabla \tilde{v}|^2 dx dx_{n+1}: \ \tilde{v}(\cdot,0) =f \mbox{ on } \R^n \right\} \\
&= C_{\gamma} \|x_{n+1}^{\frac{1-2s}{2}} \nabla \tilde{v}^f\|_{L^2(\R^{n+1}_+)}^2\leq C \|f\|_{H^{s}(\R^n)}^2,
\end{align*}
which then also implies the estimate \eqref{eq:apriori_est_CS}.
\end{proof}

In local coordinates we will make use of various Dirichlet-to-Neumann and Neumann-to-Dirichlet operators. Thus, for completeness, we recall that
the weak form of the Dirichlet-to-Neumann map given in \eqref{eq:DtN1} is defined in the following sense. For  $ f\in H^s(\R^n\times\{0\}) $,
\begin{equation}\label{eq:DtN2_def}
	\inp{\Lambda_\gamma(f),g}_{H^{-s}(\R^n),H^s(\R^n)}:=\int_{\R^{n+1}_+}x_{n+1}^{1-2s}\tilde{\gamma}(x)\nabla \tilde{u}\cdot \nabla v dx dx_{n+1},\quad \text{ for all }g\in H^s(\R^n\times\{0\}),	
\end{equation}
where $ \tilde{u}\in \dot{H}^1(\R^{n+1}_+,x_{n+1}^{1-2s}) $ is the solution of \eqref{eq:main_one1_intro} with boundary data $ f $,  $ v $ is any $  \dot{H}^1(\R^{n+1}_+,x_{n+1}^{1-2s})$ function such that $ v|_{\R^n\times\{0\}}=g $, and the left hand side \eqref{eq:DtN2_def} holds in the sense of  the duality pairing between $ H^{-s}(\R^n\times\{0\}) $ and $ H^s(\R^n\times\{0\}) $. This is well-defined by the remarks on well-posedness of the equation \eqref{eq:main_one1_intro} and trace estimates for $ \dot{H}^1(\R^{n+1}_+,x_{n+1}^{1-2s}) $ functions.

\section{The Solution-to-Source Problem -- Boundary Reconstruction for the Anisotropic Extension Problem with Generalized Dirichlet-to-Neumann Data}
\label{sec:DtN}

In this section, we present the proof of Theorem \ref{thm:Eucl}. In order to later treat the geometric case in parallel, we formulate the problem in slightly more generality than needed for the proof of Theorem \ref{thm:Eucl}. More precisely, in this section, we seek to construct approximate solutions to the following anisotropic equation
\begin{align}
\label{eq:main_one1D}
\begin{split}
\nabla \cdot x_{n+1}^{1-2s} \tilde{\gamma}(x) \nabla \tilde{u} & = 0 \mbox{ in } \R^{n+1}_+,\\
\tilde{u} & = \phi \mbox{ on } \R^n \times \{0\},
\end{split}
\end{align}
where
\begin{align*}
\tilde{\gamma}(x, x_{n+1}) = c(x) \begin{pmatrix} \gamma(x) & 0 \\ 0& 1 \end{pmatrix}
\end{align*}
is a smooth, bounded, matrix-valued function and where $ c \in C^{\infty}(\R^n) $ with 
\begin{equation}
\label{eq:est_c}
0<c_1 \leq c(x)\leq c_2 < \infty \mbox{ for } x \in \R^n. 
\end{equation}
We assume that $\tilde{\gamma}$ is uniformly elliptic and symmetric.
Moreover, $\gamma$ and $c$ only depend on the tangential variables and are independent of the normal one. For the proof of Theorem \ref{thm:Eucl} it suffices to consider the case $c(x)= 1$ while for the geometric setting from Theorem \ref{thm:bdry_reconstr_ext_nonloc_sol} we will choose $c(x)= \sqrt{\det(g)(x)}$.
 
We then consider the following measurements encoded in the (generalized) Dirichlet-to-Neumann map
\begin{align}
\label{eq:DtN1a}
\begin{split}
\Lambda_{\gamma}: & \ H^{s}(\R^n \times \{0\}) \rightarrow H^{-s}(\R^n \times \{0\}),\\
&\phi \mapsto \Lambda_{\gamma}(\phi):=c(x) \lim\limits_{x_{n+1}\rightarrow 0} x_{n+1}^{1-2s} \p_{n+1} \tilde{u}(x,0).
\end{split}
\end{align}
Our objective is to reconstruct $c^{\frac{1}{s}}\gamma: \R^n \rightarrow \R^{n\times n}$ from this information. More precisely, also in this slightly generalized setting involving additionally the function $c$, as in Theorem \ref{thm:Eucl}, we seek to prove that there exists a sequence $(\phi_N)_{N \in \N} \subset H^{s}(\R^{n} )$ of boundary data which are independent of $\tilde{\gamma}$ such that for any $\alpha \in \mathbb{S}^{n-1}$ we have
\begin{align}
\label{eq:aim_PDE}
\lim\limits_{N \rightarrow \infty} N^{-2s + \frac{n}{2}} \langle \Lambda_{\gamma}(\phi_N) , \phi_N  \rangle_{H^{-s}(\R^n), H^{s}(\R^n)}   = (c_1+c_2) (\bar{C}_{\alpha}(\gamma)(x_0))^{2s},
\end{align}
where 
\begin{align}
\label{eq:constant_weighted}
\bar{C}_{\alpha}(\gamma)(x_0):=\sqrt{\sum\limits_{j, \ell=1}^{n}  c(x_0)^{\frac{1}{s}} \gamma_{j \ell}(x_0)  \alpha_j \alpha_{\ell} }.
\end{align}

Setting $c(x) \equiv 1$, then allows us to prove Theorem \ref{thm:Eucl} which can be viewed as a ``PDE version'' of the geometric formulation of Theorem \ref{thm:bdry_reconstr_ext_nonloc_sol}.

As a first step towards this, we seek to construct an explicit approximate solution. We make the ansatz that this approximate solution is given by 
\begin{align}
\label{eq:sol_approx}
\tilde{u}_N(x, x_{n+1}) = e^{i N \alpha \cdot x} \sum\limits_{r=0}^{2k} N^{-\frac{r}{2}} v_r(x, x_{n+1}),
\end{align}
and all functions $v_r$ are compactly supported and satisfy suitable Dirichlet conditions on $x_{n+1} = 0$. We follow the strategy from \cite{NT01} to derive the desired expansion for which we expand our operator into different orders of $N$. Based on this ansatz, we seek to deduce an approximate solution satisfying the bulk equation in \eqref{eq:main_one1D} with error $o(N^{2-k})$ provided $ \gamma\in C^{2k}(\R^n,\R^{n\times n}) $ for some $ k\geq 1 $.

\subsection{Construction of approximate solutions -- preliminaries}
\label{sec:DtN_approx_prelim}

Before turning to the construction of approximate solutions, we collect some properties of modified Bessel functions which we will exploit in what follows below. We recall that the modified Bessel functions $I_s(t), K_s(t)$ (see \cite[Chapter 10.25]{NIST}) form a fundamental system to the modified Bessel equation
\begin{align*}
t^2 w''(t) + t w'(t) - (s^2 +t^2) w(t) = 0, \ t >0.
\end{align*}
We use the notation from \cite{NIST} for these functions. 

 \begin{lem}[Identities for modified Bessel functions]
\label{lem:Bessel}
Let $s\in (0,1)$ and let $I_s, K_s$ denote the modified Bessel functions of the second kind (see, e.g., \cite{NIST}) and let $t>0$.
\begin{itemize}
\item[(a)] We have (see \cite{NIST} formulas (10.27.2) and (10.27.3)) that
\begin{align*}
I_{-s}(t) = I_s(t) + \frac{2}{\pi} \sin(s \pi) K_s(t), \ K_{-s}(t) = K_s(t).
\end{align*}
\item[(b)] The derivatives of $K_s, I_s$ are given by
\begin{align*}
K_s'(t) = \frac{s}{t} K_s(t) - K_{s+1}(t), \ 
I'_s(t) = - \frac{s}{t} I_s(t) + I_{s-1}(t).
\end{align*}
See \cite{NIST}, (10.29.2).
As a consequence,
\begin{align}
\label{eq:weighted_der}
 \frac{d}{dt} (t^{s} K_{s}(t)) = t^{s}K_{s-1}(t).
\end{align}
\item[(c)] The following asymptotic behaviour holds as $t \rightarrow 0$
\begin{align*}
K_s(t) \sim 2^{s-1} \Gamma(s) \frac{1}{t^s},\quad I_s(t)=o(1),\quad I_{s-1}(t)\sim\frac{t^{s-1}}{2^{s-1}\Gamma(s)},
\end{align*}
see \cite{NIST}, (10.30.2).
Further, as $t \rightarrow 0$,
\begin{align*}
K_{s-1}(t)= K_{1-s}(t)\sim 2^{-s} \Gamma(1-s) \frac{1}{t^{1-s}}.
\end{align*}
Moreover, combining (b) and (c), we also infer that
\begin{align*}
 \lim\limits_{t \rightarrow 0} t^{1-2s} \frac{d}{dt}(t^s K_{s}(t)) = 2^{-s}\Gamma(1-s), \ t>0.
\end{align*} 
\item[(d)] 
The following asymptotic behaviour holds as $ t\to \infty $
\[K_s(t)\sim \sqrt{\frac{\pi}{2}}t^{-1/2}e^{-t},\quad I_s(t)=O\left(\frac{e^t}{t^{1/2}} \right). \]
See \cite{NIST}, (10.25.2).
\end{itemize}
\end{lem}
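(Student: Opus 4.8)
The statement to prove is Lemma~\ref{lem:Bessel}, a compilation of standard identities and asymptotics for the modified Bessel functions $I_s$ and $K_s$. The plan is essentially to cite the reference \cite{NIST} for each item and then perform the few short derivations that combine these cited facts. I would organize the proof item by item, mirroring the structure of the statement.

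For part (a), the connection formulas $I_{-s}(t) = I_s(t) + \frac{2}{\pi}\sin(s\pi) K_s(t)$ and $K_{-s}(t) = K_s(t)$ are literally \cite[(10.27.2), (10.27.3)]{NIST}, so nothing is needed beyond the citation. For part (b), the derivative recurrences $K_s'(t) = \frac{s}{t}K_s(t) - K_{s+1}(t)$ and $I_s'(t) = -\frac{s}{t}I_s(t) + I_{s-1}(t)$ are \cite[(10.29.2)]{NIST} (these follow from the standard pair of recurrence relations for $\mathcal{C}_\nu \in \{I_\nu, e^{i\nu\pi} K_\nu\}$, namely $\mathcal{C}_{\nu-1}(t) - \mathcal{C}_{\nu+1}(t) = \frac{2\nu}{t}\mathcal{C}_\nu(t)$ and $\mathcal{C}_{\nu-1}(t) + \mathcal{C}_{\nu+1}(t) = 2\mathcal{C}_\nu'(t)$); then the weighted identity~\eqref{eq:weighted_der} is a one-line computation: $\frac{d}{dt}(t^s K_s(t)) = s t^{s-1} K_s(t) + t^s K_s'(t) = s t^{s-1} K_s(t) + t^s\big(\frac{s}{t}K_s(t) - K_{s+1}(t)\big)$, and using $K_{s+1}(t) = K_{s-1}(t) + \frac{2s}{t}K_s(t)$ (the recurrence again, with $K_{-(s-1)} = K_{s-1}$ irrelevant here) this collapses to $t^s K_{s-1}(t)$. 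For part (c), the leading asymptotics $K_s(t) \sim 2^{s-1}\Gamma(s) t^{-s}$, $I_s(t) = o(1)$ and $I_{s-1}(t) \sim t^{s-1}/(2^{s-1}\Gamma(s))$ as $t \to 0$ are read off from \cite[(10.30.2), (10.30.1)]{NIST} (using $\Gamma(s)$ in place of $\Gamma(s+1)/s$ etc.); the statement $K_{s-1}(t) = K_{1-s}(t) \sim 2^{-s}\Gamma(1-s) t^{-(1-s)}$ combines $K_{s-1} = K_{1-s}$ from part (a) with the $K$-asymptotic applied at index $1-s \in (0,1)$. Finally, $\lim_{t\to 0} t^{1-2s}\frac{d}{dt}(t^s K_s(t)) = \lim_{t\to 0} t^{1-2s}\, t^s K_{s-1}(t) = \lim_{t\to 0} t^{1-s} K_{1-s}(t) = 2^{-s}\Gamma(1-s)$ by the previous line. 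For part (d), the large-$t$ asymptotics $K_s(t) \sim \sqrt{\pi/2}\, t^{-1/2} e^{-t}$ and $I_s(t) = O(t^{-1/2} e^t)$ are \cite[(10.25.3), (10.40.1), (10.40.5)]{NIST} (I would cite (10.25.2)/(10.40) as in the paper).

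There is no genuine obstacle here — the lemma is purely expository, collecting known special-function facts for later use. The only points requiring care are: (i) making sure the normalization conventions of \cite{NIST} are used consistently (in particular that $\nu = s$ with $s \in (0,1)$ is not an integer, so $K_{-s} = K_s$ and the $I_{-s}$ formula are valid, and the gamma-function prefactors in the small-$t$ asymptotics are the ones matching the \cite{NIST} series expansions); and (ii) correctly chaining the identity from part (b) through the small-$t$ asymptotics of part (c) to get the final limit, which is the only place two results are combined nontrivially. Accordingly, I would write the proof as four short paragraphs, one per item, each consisting of a citation plus (for (b) and (c)) the indicated one- or two-line manipulation, and I would flag explicitly that the last displayed limit in (c) uses both (b) and the $K_{1-s}$ asymptotic just established.
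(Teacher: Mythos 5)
Your overall strategy coincides with the paper's: in the paper this lemma is justified purely by citations to \cite{NIST}, together with the two short manipulations needed for \eqref{eq:weighted_der} and for the final limit in part (c), which is exactly the structure you propose. Parts (a), the first displays of (c), and (d) are indeed pure citations, and your treatment of them is fine.

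However, the one computation you actually carry out is flawed, and executed correctly it exposes a sign issue. Starting from $K_s'(t)=\frac{s}{t}K_s(t)-K_{s+1}(t)$ you obtain $\frac{d}{dt}\bigl(t^sK_s(t)\bigr)=2s\,t^{s-1}K_s(t)-t^sK_{s+1}(t)$, and inserting the recurrence $K_{s+1}(t)=K_{s-1}(t)+\frac{2s}{t}K_s(t)$ the terms $2s\,t^{s-1}K_s(t)$ cancel, leaving $-t^sK_{s-1}(t)$, \emph{not} $+t^sK_{s-1}(t)$ as you assert (sanity check at $s=1/2$: $t^{1/2}K_{1/2}(t)=\sqrt{\pi/2}\,e^{-t}$ is decreasing, so its derivative must be negative). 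The correct identity is $\frac{d}{dt}\bigl(t^sK_s(t)\bigr)=-t^sK_{s-1}(t)$ (see \cite{NIST}, (10.29.4) with $k=1$), so the printed \eqref{eq:weighted_der} carries a sign typo, and consequently the last limit in (c) should read $-2^{-s}\Gamma(1-s)$; this is in fact the convention the rest of the paper uses, cf.\ the asymptotics $h_N'(t)\sim-2^{-s}\Gamma(1-s)N^{2s}t^{2s-1}$ in \eqref{eq:homoODEsol}. The concrete gap in your write-up is therefore the claimed ``collapse'' to $t^sK_{s-1}(t)$: your own algebra does not produce it, and a careful execution should have led you to flag the sign discrepancy (and its effect on the limit in (c)) rather than silently reproduce the stated formula.
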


Here and below, we use the notation $A(t)\sim B(t)$ as $t \rightarrow c$ to denote that $\frac{A(t)}{B(t)} \rightarrow 1$ as $t \rightarrow c$, see Section \ref{subsec:Preli}. 

Next, we consider an ODE which will be of central relevance in constructing approximate solutions.
We consider both the case of homogeneous and of the inhomogeneous version of the ODEs. The ODEs under consideration will naturally emerge after a separation of variables ansatz, distinguishing the tangential and normal variables in the bulk PDE from \eqref{eq:main_one1D}.

Indeed, seeking to construct approximate solutions to the bulk equation in \eqref{eq:main_one1D}, we separate the tangential and normal variables. For the tangential part we make the ansatz $v(x)=e^{i N \alpha \cdot x}$, which solves the eigenvalue problem  $\Delta_x v = -N^2 v$. It thus remains to only consider an ODE in the normal direction. This is given by
\begin{align}
\label{eq:normal}
t u''(t) + (1-2s) u'(t) = N^2 t u(t) \mbox{ for } t> 0.
\end{align}
We thus begin by deducing some structure results for solutions of \eqref{eq:normal}.

\begin{lem}
\label{lem:normal}
Let $s\in (0,1)$ and let $N \in \N$.
\begin{itemize}
\item[(a)] \emph{Homogeneous ODE}:
Set
\begin{align*}
h_N(t) := (Nt)^s K_{s}(Nt),
\end{align*}
where $K_s(t)$ is the modified Bessel function of the second kind with parameter $s$. Then the function $h_N$ satisfies \eqref{eq:normal}. Moreover, it has the following asymptotic expansion as $t \rightarrow 0$ and $t \rightarrow \infty$
	\begin{equation}\label{eq:homoODEsol}
	\begin{split}
		h_N(t)&\sim 2^{s-1}\Gamma(s)\text{ as }t\to 0,\\
		h_N(t)&\sim \sqrt{\frac{\pi}{2}}N^{-\frac{1-2s}{2}}t^{-\frac{1-2s}{2}}e^{-Nt}\text{ as }t\to\infty,\\
		h'_N(t)&\sim -2^{-s}\Gamma(1-s)N^{2s}t^{2s-1} \text{ as }t\to0,\\
		h'_N(t)&\sim -\sqrt{\frac{\pi}{2}}N^{\frac{1+2s}{2}}e^{-Nt}t^{-\frac{1-2s}{2}}\text{ as }t\to\infty.
	\end{split}
\end{equation}
\item[(b)] \emph{Inhomogeneous ODE}: Let $a\geq -\frac{1}{2}$, $v:[0,\infty) \rightarrow \R$ be such that
\begin{align*}
v(t) = O(t^{-s}) \mbox{ as } t\rightarrow 0 \mbox{ and } v(t) = O(t^{a} e^{-t}) \mbox{ as } t \rightarrow \infty.
\end{align*}
Then there exists a solution $w(t)$ of the inhomogeneous ODE 
\begin{align}
\label{eq:normal1}
t^2 w''(t) + t w'(t) - (s^2 + t^2) w(t) = t^2 v(t),
\end{align}
such that for any $A>0$ the following asymptotic behaviour holds
\begin{equation}\label{eq:inhomoODEsol}
\begin{split}
&	w(t) = \frac{o(1)}{t^s} \mbox{ as } t\rightarrow 0, \ w(t) = O(t^{a+1} e^{-t}) \mbox{ as } t \rightarrow \infty,\\
&	\frac{d}{dt}((At)^s w(At)) = A^{2s}O(t^{2s-1}) \mbox{ as } t \rightarrow 0,\\
&	\frac{d}{dt}((At)^s w(At)) = A^{s+a+2}O(t^{s+a+1}e^{-At}) \mbox{ as } t \rightarrow \infty.
\end{split}	
\end{equation}
The implicit constants in the last two estimates are independent of $A$.
\end{itemize}
\end{lem}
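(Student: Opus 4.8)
The plan is to treat parts (a) and (b) separately, in both cases reducing the ODE to the standard modified Bessel equation $t^2 w'' + t w' - (s^2+t^2)w = 0$ (for (a)) or its inhomogeneous version (for (b)), and then reading off the asymptotics from Lemma \ref{lem:Bessel}. For part (a), I would start by substituting $u(t) = t^s \psi(t)$ into \eqref{eq:normal} to see which equation $\psi$ satisfies, or — more directly — I would verify by a change of variables that a solution of \eqref{eq:normal} is obtained from a solution $W(\tau)$ of the modified Bessel equation with parameter $s$ via $u(t) = t^s W(Nt)$. Indeed, with $\tau = Nt$ one computes that $u(t) = t^s W(\tau)$ solves \eqref{eq:normal} precisely when $W$ solves $\tau^2 W'' + \tau W' - (s^2 + \tau^2) W = 0$; hence $h_N(t) = (Nt)^s K_s(Nt)$ (which is $N^{-s} t^s \cdot (Nt)^s K_s(Nt)$ up to an irrelevant constant, so I will just track $h_N$ itself) solves the ODE. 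The four asymptotic relations in \eqref{eq:homoODEsol} then follow by inserting the $t\to 0$ and $t \to \infty$ expansions of $K_s$ from Lemma \ref{lem:Bessel}(c),(d) directly into $h_N(t) = (Nt)^s K_s(Nt)$, and for the derivative using \eqref{eq:weighted_der}, namely $\frac{d}{dt}\big((Nt)^s K_s(Nt)\big) = N (Nt)^s K_{s-1}(Nt)$, together with the small-$t$ asymptotics $K_{s-1}(t) = K_{1-s}(t) \sim 2^{-s}\Gamma(1-s) t^{-(1-s)}$ and the large-$t$ decay $K_{s-1}(t) \sim \sqrt{\pi/2}\, t^{-1/2} e^{-t}$. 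The sign in $h_N'(t) \sim -2^{-s}\Gamma(1-s) N^{2s} t^{2s-1}$ needs a small bookkeeping check: since $h_N$ is comparable to a positive constant near $0$ but decaying, $h_N'$ must be negative, so one must confirm that the $K_{s-1}$ term enters with the correct sign via the derivative formula $K_s'(t) = \frac{s}{t}K_s(t) - K_{s+1}(t)$ — I would use instead the cleaner identity $\frac{d}{dt}(t^s K_s(t)) = -t^s K_{1-s}(t)$, which is the corrected version of \eqref{eq:weighted_der} carrying the right sign, or simply recompute $\frac{d}{dt}(t^s K_s(t))$ from $K_s' = \frac{s}{t}K_s - K_{s+1}$ and the recurrence $K_{s+1} - K_{s-1} = \frac{2s}{t}K_s$.

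\textbf{The inhomogeneous ODE.} For part (b), the equation \eqref{eq:normal1} is the inhomogeneous modified Bessel equation with parameter $s$, whose homogeneous solutions are $I_s(t)$ and $K_s(t)$. The plan is to build a particular solution by variation of parameters: since the Wronskian $W(I_s, K_s)(t) = -1/t$ (standard, from \cite{NIST}), a particular solution is
\begin{align*}
w(t) = -K_s(t) \int_0^t I_s(\tau) v(\tau)\, d\tau + I_s(t) \int_t^\infty K_s(\tau) v(\tau)\, d\tau,
\end{align*}
modulo the appropriate normalization by the Wronskian constant. I would then verify directly that this $w$ solves \eqref{eq:normal1}, and check convergence of the two integrals using the hypotheses $v(t) = O(t^{-s})$ near $0$ (so $I_s(\tau) v(\tau) = O(\tau^{s}\cdot \tau^{-s}) = O(1)$ is integrable at $0$, using $I_s(\tau) \sim \tau^s/(2^s \Gamma(s+1))$) and $v(t) = O(t^a e^{-t})$ at infinity with $a \geq -1/2$ (so $K_s(\tau) v(\tau) = O(\tau^{-1/2} e^{-\tau} \cdot \tau^a e^{-\tau}) = O(\tau^{a-1/2} e^{-2\tau})$ is integrable at $\infty$). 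The asymptotics \eqref{eq:inhomoODEsol} then come from estimating each term: near $t = 0$, the $K_s(t)\int_0^t(\cdots)$ term is $O(t^{-s})\cdot O(t) = o(t^{-s})$ — here the key point giving the improved $o(1)/t^s$ rather than merely $O(1)/t^s$ is that $\int_0^t I_s v \to 0$ as $t \to 0$ — while the $I_s(t)\int_t^\infty(\cdots)$ term is $O(t^s)\cdot O(1) = O(t^s)$, which is lower order; at $t \to \infty$, the dominant behaviour $O(t^{a+1} e^{-t})$ comes from the $I_s(t) \int_t^\infty K_s v$ piece since $I_s(t) \sim \frac{e^t}{\sqrt{2\pi t}}$ multiplies $\int_t^\infty O(\tau^{a - 1/2} e^{-2\tau})d\tau = O(t^{a-1/2} e^{-2t})$, giving $O(t^{a} e^{-t}) \cdot \frac{1}{\sqrt t}$-type decay, and I would carefully extract the stated $O(t^{a+1}e^{-t})$ (the exponent $a+1$ presumably allowing slack; I would double-check whether it should be sharpened but the stated bound is an upper bound so it suffices). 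For the weighted derivative $\frac{d}{dt}((At)^s w(At))$, I would substitute $\tilde w(t) := (At)^s w(At)$, note that $\frac{d}{dt}\tilde w(t) = A^{s+1}(At)^s w'(At) + s A (At)^{s-1} w(At)$, or more slickly use an analogue of \eqref{eq:weighted_der} obtained by rewriting \eqref{eq:normal1} in the form $\frac{d}{dt}(t^{1-2s}\frac{d}{dt}(t^s w)) = t^{1-2s} \cdot (\text{stuff})$, and then bound the resulting expression using the already-established asymptotics for $w$ and the inhomogeneity $v$. The $A$-dependence is tracked explicitly through each power of $A$ produced by the chain rule, and uniformity of the implicit constants in the last two estimates follows since they depend only on the implicit constants in the hypotheses on $v$ and on $s$, not on $A$.

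\textbf{Main obstacle.} I expect the bookkeeping in part (b) — in particular getting the sharp small-$t$ and large-$t$ rates for the weighted derivative $\frac{d}{dt}((At)^s w(At))$ with constants uniform in $A$ — to be the most delicate step, since it requires combining the variation-of-parameters representation with the Bessel asymptotics and then differentiating, all while carefully separating the contributions of the two terms (the one with $K_s(t)$ out front versus the one with $I_s(t)$ out front) because they have opposite growth/decay behaviour and only one of them is dominant in each regime. A secondary subtlety is making sure the improvement from $O(1)/t^s$ to $o(1)/t^s$ near $t=0$ is genuinely justified — this rests entirely on $\int_0^t I_s(\tau) v(\tau)\,d\tau \to 0$, which holds because the integrand is $O(1)$ and hence the integral is $O(t) = o(1)$, and I would make sure the hypothesis $v(t) = O(t^{-s})$ (and not something worse) is exactly what makes this work. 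Everything else is a routine, if somewhat lengthy, application of Lemma \ref{lem:Bessel} and standard ODE theory.
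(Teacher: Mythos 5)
Your overall strategy is the same as the paper's: part (a) by reducing \eqref{eq:normal} to the modified Bessel equation via $u(t)=t^s\psi(t)$ and the rescaling $t\mapsto Nt$, then reading \eqref{eq:homoODEsol} off Lemma \ref{lem:Bessel}; and part (b) by variation of parameters in the fundamental system $\{I_s,K_s\}$, choosing the free constants to cancel the exponentially growing part at infinity. Your observation on part (a) that the weighted derivative identity must carry a minus sign, $\frac{d}{dt}\bigl(t^sK_s(t)\bigr)=-t^sK_{1-s}(t)$, is correct and is precisely what produces the negative sign in the asymptotics of $h_N'$.

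There is, however, a concrete error in your particular-solution formula for part (b) which propagates through your asymptotic bookkeeping. The Wronskian $\mathcal{W}(K_s,I_s)(t)=1/t$ is not a constant, and \eqref{eq:normal1} must first be divided by $t^2$ to reach standard form, so the variation-of-parameters solution carries a factor $\tau$ in the integrands:
\begin{align*}
w(t)=-I_s(t)\int_t^{\infty}K_s(\tau)v(\tau)\,\tau\,d\tau-K_s(t)\int_0^{t}I_s(\tau)v(\tau)\,\tau\,d\tau;
\end{align*}
your formula omits this weight (the phrase ``modulo the Wronskian constant'' does not repair a $\tau$-dependent correction), and moreover with your $+$ sign on the $I_s$-term the cross terms in variation of parameters do not cancel, so the displayed function does not solve \eqref{eq:normal1}. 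Two downstream consequences: first, with the correct weight the dominant behaviour as $t\to\infty$ comes from the term $K_s(t)\int_0^t I_s v\,\tau\,d\tau=O(e^{-t}t^{-1/2})\,O(t^{a+3/2})=O(t^{a+1}e^{-t})$, not from the $I_s(t)\int_t^\infty K_s v\,\tau\,d\tau$ term (which is only $O(t^ae^{-t})$), so your attribution of the $O(t^{a+1}e^{-t})$ bound is reversed; second, near $t=0$ your claim that $\int_t^\infty K_s(\tau)v(\tau)\,d\tau=O(1)$ fails for $s\ge\frac12$ without the weight (the integrand is $O(\tau^{-2s})$), whereas with the weight it is $O(\tau^{1-2s})$ and integrable for every $s\in(0,1)$. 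Finally, for the weighted derivative estimates it is not enough to ``bound using the already-established asymptotics for $w$'': one needs control of $w'$, which the paper obtains by differentiating the integral representation and using the Bessel derivative identities to get $\frac{d}{dt}\bigl(t^sw(t)\bigr)=t^s\bigl(-I_{s-1}(t)\int_t^\infty K_s v\,\tau\,d\tau+K_{s-1}(t)\int_0^t I_s v\,\tau\,d\tau\bigr)$, with the $A$-uniform rates following by rescaling; your explicit chain-rule option works only if carried out through this representation. All of these are repairs within your (and the paper's) strategy rather than a change of method.
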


\begin{proof}
Let us first consider the homogeneous ODE. 	If  $ u(t) $ is a solution of \eqref{eq:normal}, using the ansatz $ u(t)=t^sv(t) $, and setting $ w(t):=v(t/N) $, we find that $ w(t) $ solves the following ODE
\begin{equation}\label{eq:BesselODE}
	t^2w''(t)+tw'(t)-(s^2+t^2)w(t)=0,\quad t>0.
\end{equation}
A fundamental system for this ODE is given by $ w(t)=I_s(t) \text{ and } w(t)=K_s(t)$  (see \cite{NIST},  (10.25.1)). 
Therefore, to ensure exponential decay as $ t\to\infty $, we choose $ w(t)=K_s(t) $ and define 
\begin{equation}\label{eq:hN}
	h_N(t):= (Nt)^sK_s(Nt).
\end{equation}
We note that, by construction, $ h_N(t) $ is a solution of \eqref{eq:normal}. The first two claims in \eqref{eq:homoODEsol} immediately follow from Lemma \ref{lem:Bessel} (c)(d). To show the claims for $ h_N'(t) $, we invoke \eqref{eq:weighted_der}. As a consequence, the last two claims in \eqref{eq:homoODEsol} follow from the asymptotic behaviour of $ K_{s-1}(t) $ given in Lemma \ref{lem:Bessel}. This proves part (a) of the lemma. 

For part (b), to solve the inhomogeneous ODE \eqref{eq:normal1}, we note that the Wronskian of $ K_s(t) $ and $ I_s(t) $ is (see (10.28.2) in \cite{NIST})
\[ \mathcal{W}(K_s(t),I_s(t))=\frac{1}{t},\quad t>0. \]
Hence, by variation of parameters, a solution of \eqref{eq:normal1} is given by
\[w(t)=AK_s(t)+BI_s(t)-K_s(t)\int_{0}^{t}I_s(\tau)v(\tau)\tau d\tau+I_s(t)\int_{0}^{t}K_s(\tau)v(\tau)\tau d\tau. \]
We next seek to choose the constants $ A,B $ appropriately in order to guarantee that $ w(t) $  decays exponentially as $ t\to\infty $. 
To this end, we choose $ B=-\int_{0}^{\infty}K_s(\tau)v(\tau)\tau d\tau $ (see \eqref{eq:choiceB} below) and then we  set
$ A=0 $. Observe that $ |B|<\infty $ by Lemma \ref{lem:Bessel} (c)(d) and since $ s\in(0,1) $. Hence, $ w(t) $ becomes 
\[w(t)=-I_s(t)\int_{t}^{\infty}K_s(\tau)v(\tau)\tau d\tau-K_s(t)\int_{0}^{t}I_s(\tau)v(\tau)\tau d\tau. \] 

It remains to prove the claims on the asymptotic behaviour of this function. In order to derive the first claim in \eqref{eq:inhomoODEsol},  we exploit the asymptotic behaviour given by Lemma \ref{lem:Bessel} (c)(d) and the assumption on $ v(t) $. These observations yield that as $ t\to\infty $
\begin{align}\tag*{}
	I_s(t)\int_{t}^{\infty}K_s(\tau)v(
	\tau)\tau d\tau&=O\left(\frac{e^t}{t^{1/2}}\right)O\left(\int_{t}^{\infty}e^{-2
		\tau}\tau^{a+\frac{1}{2}}d\tau \right)
	\\\label{eq:choiceB}&=O\left(\frac{e^t}{t^{1/2}}\right)O(e^{-2t}t^{a+\frac{1}{2}})=O(t^ae^{-t}),\\\tag*{}
	K_s(t)\int_{0}^{t}I_s(\tau)v(\tau)\tau d\tau&=O\left( \frac{e^{-t}}{t^{1/2}} \right)O\left(\int_{0}^{t}\tau^{a+\frac{1}{2}}d\tau \right)=O\left( \frac{e^{-t}}{t^{1/2}} \right)O(t^{a+\frac{3}{2}})=O(t^{a+1}e^{-t}).
\end{align}
Consequently, $ w(t)=O(t^{a+1}e^{-t}) $ as $ t\to\infty $.
Similarly, using the asymptotic behaviour for the functions $ I_s,K_s,v$, as $ t\to 0 $, we infer that
\[\begin{split}
	I_s(t)\int_{t}^{\infty}K_s(\tau)v(
	\tau)\tau d\tau&=o(1),\quad K_s(t)\int_{0}^{t}I_s(\tau)v(\tau)\tau d\tau=\frac{o(1)}{t^s},
\end{split} \]
and, hence, $ w(t)=\frac{o(1)}{t^s} $. This concludes the proof of the first claim in \eqref{eq:inhomoODEsol}.

Next, we provide the proof of the final two claims in \eqref{eq:inhomoODEsol}. We begin by considering the case $A=1$, the general case will then be obtained by rescaling.
Using the formulas for $ K_s $ and $ I_s $ specified in Lemma \ref{lem:Bessel} (a)(b), we obtain
\[\begin{split}
	\frac{d}{dt}&\left(t^sw(t) \right)= t^s\left(\frac{s}{t}w(t)+w'(t) \right)         \\
	& = t^s\left( -\frac{s}{t}I_s(t)\int_{t}^{\infty}K_s(\tau)v(\tau)\tau d\tau-\frac{s}{t}K_s(t)\int_{0}^{t}I_s(\tau)v(\tau)\tau d\tau \right.\\
	&\left. \quad -I_s'(t)\int_{t}^{\infty}K_s(\tau)v(\tau)\tau d\tau+K_s'(t)\int_{0}^{t}I_s(\tau)v(\tau)\tau d\tau  \right)\\	
	& =t^{s}\left(-I_{s-1}(t)\int_{t}^{\infty}K_s(\tau)v(\tau)\tau d\tau+ K_{s+1}(t)\int_{0}^{t}I_s(\tau)v(\tau)\tau d\tau \right.\\	
	&\left.  \quad -2\frac{s}{t}K_s(t)\int_{0}^{t}I_s(\tau)v(\tau)\tau d\tau \right).
\end{split}  \]
Now we rewrite $ \frac{s}{t}K_s(t)=-\frac{K_{s-1}(t)}{2}+\frac{K_{s+1}(t)}{2} $ (by Lemma \ref{lem:Bessel} (b)) which leads to
\[\begin{split}
	\frac{d}{dt}\left(t^sw(t)\right)
	& = t^s\left(-I_{s-1}(t)\int_{t}^{\infty}K_s(\tau)v(\tau)\tau d\tau  
	+ K_{s-1}(t)\int_{0}^{t}I_s(\tau)v(\tau)\tau d\tau \right). 
\end{split}  \]
Then, for $A=1$, the last two claims in \eqref{eq:inhomoODEsol} are obtained from the corresponding asymptotic behaviour of the functions $ K_s(t)$, $K_{s-1}(t)$, $I_{s}(t)$, $I_{s-1}(t)$, $v(t)$. More precisely, as $ t\to 0 $, 
\[\frac{d}{dt}\left(t^sw(t) \right)= t^s\left( O(t^{s-1})+O(t^{s-1})o(1) \right)= O(t^{2s-1}),  \]
and, as $ t\to\infty $, 
\[\begin{split}
	\frac{d}{dt}\left(t^sw(t) \right)&= t^s\left(O(e^{-t}t^a)+O(e^{-t}t^{a+1})\right)
	=O\left(e^{-t}t^{a+1+s}\right).
\end{split} \]
Finally, the results for $A>0$ follow by rescaling: For instance, 
\begin{align*}
\frac{d}{dt}\left((At)^sw(At)\right)
& = A \frac{d}{dz}\left(z^sw(z)\right)\big|_{z=At} = A O((At)^{2s-1}) = A^{2s}O(t^{2s-1}) \mbox{ as } t \rightarrow 0,
\end{align*}
with the implicit constants being independent of $A$.
\end{proof}

\subsection{Construction of approximate solutions -- expansion and conclusion}
\label{sec:DtN_approx_proof}

In this section, building on the results from the previous section, we construct suitable approximate solutions of \eqref{eq:main_one1D} which are of the form \eqref{eq:sol_approx}. We view the asymptotics of these solutions as of independent interest.

To this end, let us first fix the boundary data $ (\phi_N)_{N \in \N} $ which we will use in the boundary reconstruction result of Theorem \ref{thm:Eucl}. Without loss of generality, fix $ (x_0,0)=(0,0) $ and let $ \eta\in C^\infty_c(\R^n) $ be such that for some $C>0$
\begin{equation}
\label{eq:eta}
0\leq \eta\leq C,\quad \int_{\R^n}\eta^2dx=1,\quad \text{supp}(\eta)\subset\{|x|<1 \}, 
\end{equation}
and let $ \eta_N(x):=\eta(\sqrt{N}x) $. 
We will often use the following scaling 
\begin{equation}\label{eq:scaling}
	z_i=\sqrt{N}x_i,\:\:i=1,\ldots,N,\quad z_{n+1}=Nx_{n+1},
\end{equation}
which will help us to extract the dependence on $ N $ of various quantities in the computations.
For $ \alpha\in \mathbb{S}^{n-1} $ an arbitrary but fixed unit vector, we define the boundary data to be 
\begin{equation}\label{eq:bdydata}
	\phi_N(x):=\bar{c}_se^{iN\alpha\cdot x}\eta_N(x)
\end{equation}
where $ \bar{c}_s=2^{s-1}\Gamma(s) $ is the constant from the asymptotic behaviour of the modified Bessel function, i.e., it is such that $ K_s(t)\sim \bar{c}_st^{-s} $ as $ t\to 0 $.
For notational convenience, we also set
\begin{equation}\label{eq:C0}
	C_0:=C_0(\alpha):=\sqrt{\sum_{j,l=1}^n\gamma_{jl}(0)\alpha_j\alpha_l} .	
\end{equation}

\begin{prop}[Approximate solutions]
\label{prop:DtN_approx}
Let $s\in (0,1)$, $\alpha \in \mathbb{S}^{n-1}$, $N \in \N$ and let $\gamma \in C^{2k}(\R^n, \R^{n\times n}),\:c\in C^{2k}(\R^n)$ for some $k\geq 1$. Then there exists an approximate solution $\tilde{u}_N$ of the form \eqref{eq:sol_approx} with the following properties:
\begin{itemize}
\item[(a)] The functions $v_r$ have the following form
\begin{align*}
v_0(x,x_{n+1}) &= \eta(z) (C_0 z_{n+1})^s K_s(C_0 z_{n+1}),\\
v_r(x,x_{n+1}) &= \sum\limits_{\ell=1}^{r}\sum_{\rho=1}^{\binom{r-1}{\ell-1}} P_r^{\ell,\rho}(z) (C_0 z_{n+1})^s w_r^{\ell,\rho}(C_0 z_{n+1}), \  r\in \{1,\dots,2k\},
\end{align*}

where $P_{r}^{\ell,\rho}$ are smooth functions with compact support in $\{z\in \R^n: |z|\leq 1\}$ and for $r\in\{1,\dots,2k\}$ and $ \rho\in\{1,\ldots,\binom{r-1}{\ell-1} \} $ it holds that
\begin{align*}
w_r^{\ell,\rho}(t) &= \frac{o(1)}{t^{s}} \mbox{ as } t \rightarrow 0,\\
w_r^{\ell,\rho}(t) &= O(e^{-t} t^{\frac{1}{2}+(\ell-1)}) \mbox{ as } t \rightarrow \infty,\\
\lim\limits_{t \rightarrow 0} t^{1-2s} A^{-2s}\frac{d}{dt}\left( (At)^s w_r^{\ell,\rho}(At) \right) &=  O(1) ,\\
\lim_{t\to\infty}e^{At}t^{-s-\frac{1}{2}-(\ell-1)}A^{-s-\frac{1}{2}-\ell}\frac{d}{dt}\left((At)^sw_r^{\ell,\rho}(At) \right)&=O(1),
\end{align*} 
for any constant $A>0$. In the last two limits the implicit constants are independent of $A>0$.
\item[(b)] It holds that
\begin{align*}
\tilde{u}_N(x,0)
=  \bar{c}_s e^{i N \alpha \cdot x} \eta_N(x) =: \phi_N(x),
\end{align*}
i.e., with $\phi_N$ as in \eqref{eq:bdydata}.
\item[(c)] The function $\tilde{u}_N$ is an approximate solution in the sense that
\begin{align*}
\nabla \cdot x_{n+1}^{1-2s} \tilde{\gamma} \nabla \tilde{u}_N =  o(N^{1-k+2s}) e^{i N \alpha \cdot x} z_{n+1}^{1-2s} \mbox{ as } N \rightarrow \infty.
\end{align*}
\end{itemize}
\end{prop}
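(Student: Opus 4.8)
The plan is to construct $\tilde u_N$ order by order in $N^{-1/2}$ by inserting the ansatz \eqref{eq:sol_approx} into the operator $\nabla \cdot x_{n+1}^{1-2s}\tilde\gamma\nabla$ and collecting the resulting powers of $N$, using the scaling \eqref{eq:scaling} throughout. First I would write, in the rescaled variables $z = \sqrt N x$, $z_{n+1} = Nx_{n+1}$, the action of the operator on $e^{iN\alpha\cdot x} \sum_r N^{-r/2} v_r$. Because $\gamma, c$ depend only on the tangential variables, the normal part of the operator contributes exactly the ODE operator $\mathcal{L}_t := t\,\partial_t^2 + (1-2s)\partial_t$ studied in Lemma \ref{lem:normal}, acting in the variable $z_{n+1}$, while the tangential part produces the second-order tangential operator $\nabla_x \cdot c(x)\gamma(x)\nabla_x$ plus the first-order $\nabla \cdot (c\gamma)\cdot \nabla$ terms. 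The crucial observation is the homogeneity count: $e^{-iN\alpha\cdot x}\,\nabla_x\cdot c\gamma\nabla_x (e^{iN\alpha\cdot x} v_r)$ expands as $-N^2 c(x)(\alpha\cdot\gamma(x)\alpha) v_r + N^{3/2}(\text{first order in }\partial_z) + N(\text{second order in }\partial_z + \text{lower})$, so after Taylor expanding $c$ and $\gamma$ around $x_0 = 0$ (this is where $\gamma \in C^{2k}$ is used — one needs $2k$ derivatives to push the expansion to order $N^{2-k}$) each coefficient function is a polynomial in $z$ of controlled degree times a value of a derivative of $\gamma$ at $0$.

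Next I would set up the recursion. At the top order $N^{2}$ one needs the coefficient of $e^{iN\alpha\cdot x} z_{n+1}^{1-2s}$ to vanish, which forces $v_0$ to solve the normal ODE \eqref{eq:normal} with $N$ replaced by $C_0 = C_0(\alpha)$ after the substitution $z_{n+1} \mapsto C_0 z_{n+1}$; Lemma \ref{lem:normal}(a) gives $v_0 = \eta(z)(C_0 z_{n+1})^s K_s(C_0 z_{n+1})$, and the factor $\eta(z)$ — which is annihilated to leading order since it only feels the tangential derivatives — provides the compact support. Then, inductively, at order $N^{2 - r/2}$ the equation for $v_r$ takes the form $\big(\mathcal{L}_{z_{n+1}} - C_0^2 z_{n+1}\big) v_r = F_r$, where $F_r$ is an explicit expression built from $v_0,\dots,v_{r-1}$, their tangential derivatives, and Taylor coefficients of $c,\gamma$. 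After the substitution $t = C_0 z_{n+1}$ and extracting the $t^s$ prefactor, this becomes precisely the inhomogeneous Bessel-type ODE \eqref{eq:normal1}, so Lemma \ref{lem:normal}(b) produces a solution $w_r^{\ell,\rho}$ with exactly the claimed $t\to 0$ and $t\to\infty$ asymptotics; the index $\ell$ tracks how many times the inhomogeneous solve has been iterated (each iteration raises the polynomial-growth exponent in the exponential decay estimate by one, giving the $t^{1/2 + (\ell-1)}$ bound), and $\rho$ enumerates the $\binom{r-1}{\ell-1}$ distinct terms that arise combinatorially from which tangential derivatives hit which factor. The polynomials $P_r^{\ell,\rho}(z)$ collect the $z$-dependence coming from the tangential derivatives of $\eta$ and the Taylor polynomials, and they inherit compact support in $\{|z|\le 1\}$ from $\eta$. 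One must check at each step that the right-hand side $F_r$ satisfies the decay/growth hypotheses of Lemma \ref{lem:normal}(b) (it does, by the inductive asymptotics of the $v_j$ and since multiplication by the polynomials $P$ and the bounded $\eta$-derivatives preserves the relevant bounds, with $a = (\ell-1) - 1/2 \ge -1/2$), which closes the induction.

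For part (b), evaluating at $x_{n+1} = 0$ kills every $v_r$ with $r \ge 1$ because $w_r^{\ell,\rho}(t) = o(1)t^{-s} \to 0$ forces $(C_0 z_{n+1})^s w_r^{\ell,\rho}(C_0 z_{n+1}) \to 0$ as $z_{n+1}\to 0$, while $v_0(x,0) = \eta(z)\cdot 2^{s-1}\Gamma(s) = \bar c_s \eta_N(x)$ by the $t\to 0$ asymptotic $K_s(t)\sim \bar c_s t^{-s}$ from Lemma \ref{lem:Bessel}(c); hence $\tilde u_N(x,0) = \bar c_s e^{iN\alpha\cdot x}\eta_N(x) = \phi_N(x)$. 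For part (c), by construction all terms up to and including order $N^{2 - (2k)/2} = N^{2-k}$ have been cancelled, so the residual is the leftover from applying the operator to the last term $N^{-k} v_{2k}$, together with the not-yet-cancelled pieces; a careful bookkeeping of the powers of $N$ — using that each remaining term carries at least one extra $N^{-1/2}$ beyond $N^{2-k}$ and that the $z_{n+1}^{1-2s}$ weight is exactly reproduced after differentiating $t^s K_{s-1}$-type expressions via \eqref{eq:weighted_der} — gives the bound $o(N^{1-k+2s})e^{iN\alpha\cdot x} z_{n+1}^{1-2s}$; one should double-check the precise exponent $1-k+2s$ against the scaling, since the weight $x_{n+1}^{1-2s} = N^{-(1-2s)}z_{n+1}^{1-2s}$ shifts the naive count by $2s-1$.

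The main obstacle I anticipate is the combinatorial bookkeeping in the inductive step: keeping precise track of the index set $(\ell,\rho)$ with $\rho$ ranging over $\binom{r-1}{\ell-1}$ terms, verifying that the polynomial degrees and the exponential-growth exponents $\tfrac12 + (\ell-1)$ propagate correctly through each application of Lemma \ref{lem:normal}(b), and ensuring that the inhomogeneities $F_r$ at every stage genuinely satisfy the hypotheses $v(t) = O(t^{-s})$ as $t\to 0$ and $v(t) = O(t^a e^{-t})$ with $a \ge -1/2$ as $t\to\infty$ — in particular that no worse singularity at $t=0$ is generated by the lower-order terms. The rescaling identities at the end of Lemma \ref{lem:normal}(b) (with $A = C_0$), asserting that the implicit constants are independent of $A$, are what make the final asymptotics in part (a) uniform, so it is essential that every normal-direction solve is phrased through that rescaled lemma rather than re-derived ad hoc.
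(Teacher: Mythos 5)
Your proposal follows essentially the same route as the paper's proof: rescaling via \eqref{eq:scaling}, Taylor expansion of $c,\gamma$ up to order $2k$, grouping powers of $N$ into a hierarchy of equations whose normal part is the Bessel-type ODE of Lemma \ref{lem:normal}, solving iteratively with the $(\ell,\rho)$ bookkeeping (with $\ell$ counting iterated inhomogeneous solves and $\rho$ the multiplicity), and then reading off (b) from the $t\to 0$ asymptotics and (c) from the construction together with the $N^{2s-1}$ shift coming from the weight $x_{n+1}^{1-2s}$. The only imprecision is that the normal part of the operator also carries the factor $c(x)$ (since $\tilde\gamma_{n+1,n+1}=c$), which the paper handles by dividing the whole equation by $c$; this does not affect your conclusion, as the common factor cancels and the leading ODE coefficient is indeed $C_0^2=\gamma(0)\alpha\cdot\alpha$.
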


In constructing these solutions, we deduce a hierarchy of equations satisfied by the functions $v_{r}$. We will reduce these equations to certain (inhomogeneous) ODEs, which we can solve by virtue of Lemma \ref{lem:normal}.

\begin{proof}
\emph{Step 1: Expansion of the bulk equation and derivation of a hierarchy of PDEs.}
In order to construct the approximate solution, following \cite{NT01}, we rewrite the bulk equation in \eqref{eq:main_one1D}:
\begin{align*}
\nabla \cdot x_{n+1}^{1-2s} \tilde{\gamma}(x) \nabla \tilde{u}_N(x,x_{n+1})
& = \sum\limits_{j,\ell=1}^{n} x_{n+1}^{1-2s} \tilde{\gamma}_{j \ell}(x) \p^2_{x_j x_{\ell}} \tilde{u}_N (x, x_{n+1}) \\
& \quad +c(x) \p_{x_{n+1}} x_{n+1}^{1-2s } \p_{x_{n+1}}\tilde{u}_N (x, x_{n+1})\\
& \quad + \sum\limits_{j,\ell=1}^{n} x_{n+1}^{1-2s} \left( \p_{x_{j}} \tilde{\gamma}_{j \ell}(x)  \right) \p_{x_{\ell}} \tilde{u}_N(x,x_{n+1}).
\end{align*}
Using the ansatz \eqref{eq:sol_approx} for $\tilde{u}_N$ then leads to
\begin{align*}
&\nabla \cdot x_{n+1}^{1-2s} \tilde{\gamma}(x) \nabla \tilde{u}_N(x,x_{n+1})
= e^{i N \alpha \cdot x}\left[- N^2 \sum\limits_{j,\ell=1}^{n} x_{n+1}^{1-2s} \tilde{\gamma}_{j \ell}(x) \alpha_j \alpha_{\ell} \right.\\
& \quad + i N \sum\limits_{j,\ell=1}^{n} x_{n+1}^{1-2s} \tilde{\gamma}_{j \ell}(x) \left( \alpha_j \p_{x_{\ell}} +  \alpha_{\ell} \p_{x_j} \right) \\ 
&  \quad + \sum\limits_{j,\ell=1}^{n} x_{n+1}^{1-2s} \tilde{\gamma}_{j \ell}(x) \p^2_{x_{\ell} x_{j}}
+ c(x)\p_{x_{n+1}} x_{n+1}^{1-2s } \p_{x_{n+1}} \\
& \quad \left. 
+ i N\sum\limits_{j,\ell=1}^{n} x_{n+1}^{1-2s} \left( \p_{x_{j}} \tilde{\gamma}_{ j \ell}(x)  \right) \alpha_j 
+ \sum\limits_{j,\ell=1}^{n} x_{n+1}^{1-2s} \left( \p_{x_{j}} \tilde{\gamma}_{ j\ell}(x)  \right) \p_{x_{\ell}} \right] \sum\limits_{r=0}^{2k}  N^{-\frac{r}{2}} v_r(x, x_{n+1}).
\end{align*}
	Recalling \eqref{eq:scaling}, we rescale this identity. In particular, $ \partial_{x_j}=\sqrt{N}\partial_{z_j} $ for $ j=1,\ldots,n $ and $ \partial_{x_{n+1}}=N\partial_{z_{n+1}} $. Recalling the definition of $\tilde{\gamma}$ and dividing both sides by  the function $c$ (and using that $c(x)\geq c_1>0$), this leads to 
\begin{equation}\label{eq:LongEq}
	\begin{split}
		&(c(x))^{-1}\nabla \cdot x_{n+1}^{1-2s}\tilde{\gamma}(x)\nabla\tilde{ u}_N(x,x_{n+1})\\
		& =e^{iN\alpha\cdot x}\frac{z_{n+1}^{1-2s}}{N^{1-2s}}\left[N^2\left(-\sum_{j,l=1}^n\gamma_{jl}(x)\alpha_j\alpha_l+(1-2s)\frac{1}{z_{n+1}}\partial_{z_{n+1}}+\partial_{z_{n+1}}^2 \right)   \right.\\
	& \quad +iN^{3/2}\sum_{j,l = 1}^{n}2\gamma_{jl}(x)\alpha_l\partial_{z_j}+N\left(\sum_{j,l=1}^{n}\gamma_{jl}(x)\partial_{z_jz_l}^2+i\sum_{j,l=1}^n \left(\frac{\partial_{x_j}(c \gamma_{jl})(x)}{c(x)}\right)\alpha_l \right)\\
	&\quad \left.+N^{1/2}\sum_{j,l=1}^{n}\left(\frac{\partial_{x_j}(c\gamma_{jl})(x)}{c(x)} \right)\partial_{z_l} \right]\sum_{r=0}^{2k}N^{-\frac{r}{2}}v_r(x,x_{n+1}).
	\end{split}
\end{equation}
By assumption, $ \gamma,\:c\in C^{2k} $, and a componentwise Taylor expansion in  the rescaled variable $ z $  shows that as $ N\to\infty $, we have uniformly for $ z\in\{z\in\R^n: \ |z|\leq 1\} $,
\[\begin{split}
	\gamma(x)&=\sum_{|\beta|<2k}\frac{N^{-\frac{|\beta|}{2}}}{\beta!}\partial_{x^\beta}^{|\beta|}\gamma(0)z^\beta+o(N^{-k}),\\
	\left( \frac{\partial_{x_j} (c \gamma)}{c} \right)(x)&=\sum_{|\beta|\leq 2k-1}\frac{N^{-\frac{|\beta|}{2}}}{\beta!}\partial_{x^\beta}^{|\beta|} \frac{\partial_{x_j}(c\gamma)(x)}{c(x)} \Big|_{x=0} z^\beta+o(N^{-k+\frac{1}{2}}),
\end{split} \]
where $ \beta\in \N^n $ is a multi-index of length $ |\beta| $. Substituting these expansions of $ \gamma(x),\frac{\partial_{x_j} (c \gamma)}{c}(x) $ into \eqref{eq:LongEq} and grouping terms with the same order in $ N $, we obtain 
\begin{align}\label{eq:HierarchyODE}
\begin{split}
	&(c(x))^{-1}\nabla\cdot x_{n+1}^{1-2s}\tilde{\gamma}(x)\nabla\tilde{ u}_N(x,x_{n+1})\\
	& \quad =e^{iN\alpha\cdot x}\frac{z_{n+1}^{1-2s}}{N^{1-2s}}\left(L_2+L_{3/2}+L_1+\cdots+L_{2-k}+L_R \right)\sum_{r=0}^{2k}N^{-\frac{r}{2}}v_r(x,x_{n+1}),
	\end{split}
\end{align}
where the  second-order differential operators $ L_2,L_{3/2},\ldots,L_{2-k}, L_R $ are given by
\[\begin{split}
	L_2&=N^2\left(-\sum_{j,\ell=1}^{n}\gamma_{jl}(0)\alpha_j\alpha_\ell+(1-2s)\frac{1}{z_{n+1}}\partial_{z_{n+1}}+\partial_{z_{n+1}}^2 \right),\\
	L_{3/2}&=N^{3/2}\left(2i\sum_{j,\ell=1}^n\gamma_{j\ell}(0)\alpha_j\partial_{z_\ell}-\sum_{j,\ell=1}^n\sum_{h=1}^n \partial_{x_h}(\gamma_{j\ell})(0)z_h\alpha_j\alpha_\ell \right),\\
	L_m&=N^m\sum_{|\mu|\leq 2}P_{m,\mu}(z)\partial_{z^\mu}^{|\mu|}\quad\text{for } m=1,\frac{1}{2},\ldots,2-k,\\
	L_R&=\sum_{|\mu|\leq 2}o(N^{2-k})\partial_{z^\mu}^{|\mu|}, \text{ as }N\to\infty, \text{ uniformly in }\{|z|\leq 1\}.
\end{split} \]
Here the functions $ P_{m,\mu}(z) $ are polynomials in $ z $ whose coefficients are determined from $ \alpha $, $ \partial_{x^\beta}^{|\beta|}\gamma(0) $ and $ \p^{|\beta|}_{x^\beta}\left(\p_{x_j}(c\gamma)/c \right)(0) $, and we interpret $ \partial^{|(0,\ldots,0)|}_{z^{(0,\ldots,0)}}= \text{Id}$. Note that the operator $ L_2 $ is an ordinary differential operator which only differentiates in the $ z_{n+1} $ variable and $ L_{3/2}, L_1,\ldots, L_{2-k} $ only act on the $ z $ variable. Now expanding \eqref{eq:HierarchyODE} and grouping terms with the same order in $N$, we obtain
\[\begin{split}
	(c(x))^{-1}\nabla&\cdot x_{n+1}^{1-2s}\tilde{\gamma}(x)\nabla\tilde{ u}_N =e^{iN\alpha\cdot x}\frac{z_{n+1}^{1-2s}}{N^{1-2s}}\left[ L_2v_0+(L_2N^{-\frac{1}{2}}v_1+L_{3/2}v_0)\right.\\
	&+(L_2N^{-1}v_2+L_{3/2}N^{-\frac{1}{2}}v_1+L_1v_0)+\cdots +\\
	&+\left.(L_2N^{-k}v_{2k}+\cdots+L_{2-k}v_0 )  \right]  +o(N^{1-k+2s})e^{iN\alpha\cdot x}z_{n+1}^{1-2s}.
\end{split} \] 
For part (c) of the proposition to be satisfied, we solve the following ODEs (of the form $ L_2f=g $) iteratively:
\[\begin{split}
	L_2v_0&=0,\quad L_2N^{-\frac{1}{2}}v_1+L_{3/2}v_0=0,\quad L_2N^{-1}v_2+L_{3/2}N^{-\frac{1}{2}}v_1+L_1v_0=0, \\
	&\cdots,\: L_2N^{-k}v_{2k}+\cdots+L_{2-k}v_0=0.
\end{split} \]

\emph{Step 2: Iterative solution of the hierarchy of ODEs, construction of the functions $v_r$.}

We solve the ODEs from above by applying Lemma \ref{lem:normal} (a) to $ L_2v_0=0 $ and Lemma \ref{lem:normal} (b) to the inhomogeneous ODEs. We complement these with boundary conditions by imposing that the functions $v_r$, $r\geq 1$, vanish on $ \{z_{n+1}=0\} $ and that they decay exponentially at infinity. Lemma \ref{lem:normal} allows us to obtain the boundary behaviour of the solutions to these ODEs in a more precise manner.  

In light of part (b) of the proposition, we set
\[
v_0(z,z_{n+1})=\eta(z)(C_0z_{n+1})^sK_s(C_0z_{n+1})
 \]
as also stated in part (a) of the proposition.  It follows that $ v_0|_{\{x_{n+1}=0\}}(x)=\bar{c}_s\eta_N(x) $ since $ K_s(t)\sim \frac{\bar{c}_s}{t^s} $ as $ t\to 0 $. 
We now proceed by induction to obtain the functions $ v_r $. Let $ r\in\{0,1,\ldots,2k-1\} $ and suppose that we are given $ \{v_0,v_1,\ldots,v_r\} $ in the form stated in part (a) of the proposition with the claimed asymptotic behaviour for the functions $ \{w_j^{\ell,\rho}\}_{j=1,\ldots,r,\:\ell=1,\ldots,j} $ for $\rho \in \{1,\dots, \binom{r-1}{\ell-1}\}$. We will then obtain the function $ v_{r+1} $ by solving 
\[L_2N^{-\frac{r+1}{2}}v_{r+1}+L_{3/2}N^{-\frac{r}{2}}v_r+L_1N^{
\frac{1-r}{2}}v_{r-1}+\cdots+L_{\frac{3-r}{2}}v_0=0,\]
which we rewrite as

\begin{equation}\label{eq:v_r+1ODE}
\begin{split}
	\big(z_{n+1}\partial_{z_{n+1}}^2+(1-2s)\partial_{z_{n+1}}-C_0^2z_{n+1} \big)&v_{r+1}=z_{n+1}\frac{L_{3/2}}{N^{3/2}}v_r\\
	&+z_{n+1}\frac{L_1}{N}v_{r-1}+\cdots+z_{n+1}\frac{L_{\frac{3-r}{2}}}{N^{\frac{3-r}{2}}}v_0.
\end{split}	
\end{equation}
We next exploit the induction hypothesis and expand the functions $ v_0,\ldots,v_r $ in \eqref{eq:v_r+1ODE}, using the formulas for $ v_0,\ldots,v_r $ given in part (a) of the proposition and regroup the terms by combining contributions which asymptotically behave as $ O(e^{-z_{n+1}}z_{n+1}^{\frac{1}{2}+\ell-1}) $ as $ z_{n+1}\to\infty $ in the normal direction.  To this end, for each $ \ell $, we relabel the indices temporarily and write 
\[\bigcup_{i=\ell}^{r}\bigcup_{\rho=1}^{\binom{i-1}{\ell-1}}\{w_{i}^{\ell,\rho}\}=:\{w^{\ell,1},w^{\ell,2},\ldots,w^{\ell,\binom{r}{\ell}} \}, \]
where we used $ \sum_{i=\ell}^{r}\binom{i-1}{\ell-1}=\binom{r}{\ell} $. The idea here is to regroup terms with the same asymptotic behaviour on the right hand side of \eqref{eq:v_r+1ODE}. This allows us to apply  Lemma \ref{lem:normal} to solve the ODE \eqref{eq:v_r+1ODE}.
With this relabelling, we obtain 
\begin{align}
\label{eq:expansion_relabelled}
\begin{split}
\big(z_{n+1}\partial_{z_{n+1}}^2+(1-2s)\partial_{z_{n+1}}-C_0^2z_{n+1} \big) v_{r+1}&=:P_{r+1}^{1,1}(z)(C_0z_{n+1})^{s}K_s(C_0z_{n+1})\\
& \quad +\sum_{\ell=1}^{r}\sum_{\rho=1}^{\binom{r}{\ell}}P_{r+1}^{\ell+1,\rho}(z)(C_0z_{n+1})^sw^{\ell,\rho}(C_0 z_{n+1}).
\end{split} 
\end{align}
Here, we  have $ P_{r+1}^{1,1}(z):=\frac{L_{\frac{3-r}{2}}(\eta)}{N^{\frac{3-r}{2}}}(z) $ and the other smooth functions $ P_{r+1}^{\ell,\rho} $ are defined  by applying the appropriate differential operators (that is, one of the following: $ L_{3/2}/N^{3/2}$,$\ldots$, $L_{2-r/2}/N^{2-r/2} $) to the smooth functions $ P_r^{\ell,\rho}(z) $.
Again, we first use the ansatz provided in the proof of Lemma \ref{lem:normal} (a) to reduce the ODE \eqref{eq:expansion_relabelled} to an ODE of the form \eqref{eq:normal1}. Then we apply Lemma \ref{lem:normal} (b) to obtain a solution of the above ODE,
\[\begin{split}
v_{r+1}(x,x_{n+1})&=\sum\limits_{\ell=1}^{r+1}\sum_{\rho=1}^{\binom{r}{\ell-1}} P_{r+1}^{\ell,\rho}(z) (C_0 z_{n+1})^s w_{r+1}^{\ell,\rho}(C_0 z_{n+1})
\end{split} \]
where $ w_{r+1}^{\ell,\rho} $ are solutions of the following ODEs provided by Lemma  \ref{lem:normal}
\[\begin{split}
t^2(w_{r+1}^{1,1})''(t)+t(w_{r+1}^{1,1})'-(s^2+t^2)w_{r+1}^{1,1}(t)&=t^2K_s(t),\\
t^2(w_{r+1}^{\ell,\rho})''(t)+t(w_{r+1}^{\ell,\rho})'-(s^2+t^2)w_{r+1}^{\ell,\rho}(t)&=t^2w^{\ell-1,\rho}(t),\\
&\text{ for }\ell=2,\ldots,r+1,\:\rho=1,\ldots,\binom{r}{\ell-1}.
\end{split} \]
Let us comment on the roles played by the indices $ r,\ell $ and $ \rho $ in the definition of the functions $ v_r $. The index $ r $ is the induction index  for which we iterate to solve the hierarchy of ODEs; the index $ \ell $ indicates the asymptotic behavior of the functions $ w^{\ell,\rho}_r $ (as shown in the part (a) of the Proposition) and the index $ \rho $ counts how many of these functions with the same asymptotic behavior is present in the formula for $ v_r $.
By our relabelling above, for any $ \rho=1,\ldots,\binom{r}{\ell-1} $, we have $ w^{\ell-1,\rho}=O(e^{-t}t^{\frac{1}{2}+\ell-2}) $. Therefore, Lemma \ref{lem:normal} (b) shows that $ w_{r+1}^{\ell,\rho}=O(e^{-t}t^{\frac{1}{2}+\ell-1}) $. Similarly, Lemma \ref{lem:normal} also guarantees the asymptotic behaviour as claimed in part (a) of the proposition.  This completes the induction step.

\emph{Step 3: Proof of the asymptotic properties from (a)-(c).} The claims in part (a) are now consequences of Lemma \ref{lem:normal} since all the functions $ w_r^{\ell,\rho} $ are ODE solutions provided by that lemma,  and part (c) is satisfied by our construction (we recall that the the aim of achieving the bounds in (c) was the reason for which we derived the above hierarchy of ODEs in the first place). To see part (b), note that $ v_r(x,x_{n+1})\to 0 $ as $ x_{n+1}\to 0 $ uniformly in $ \{|x|\leq 1/\sqrt{N}\} $ since $ t^sw_r^{\ell,\rho}(t)=o(1) $ as $ t\to 0 $ for all $ \ell\in\{1,\ldots,r\} $, $ \rho\in\{1,\ldots,\binom{r-1}{\ell-1}\} $ and $ r\in\{1,\ldots,2k\} $. Therefore, 
\[\tilde{u}_N(x,0)=e^{iN\alpha\cdot x}v_0(x,0)= \bar{c}_se^{iN\alpha\cdot x}\eta_N(x).\]

\end{proof}
\subsection{Error bounds and proof of Theorem \ref{thm:Eucl}}
\label{sec:error_bounds_anisotropic}
Concluding the proof of Theorem \ref{thm:Eucl}, we carry out the error estimates associated with the approximate solution from Proposition \ref{prop:DtN_approx}, measuring the deviation from being an exact solution. To this end, we introduce the definition of the error of $\tilde{u}_N$ of being a solution. Indeed, we define $\tilde{w}_N$ to be the unique $\dot{H}^{1}(\R^{n+1}_+, x_{n+1}^{1-2s})$ solution of the bulk equation with the boundary data $\phi_N$ (see Proposition \ref{prop:well-posed} for well-posedness):
\begin{align*}
\nabla \cdot x_{n+1}^{1-2s} \tilde{\gamma} \nabla \tilde{w}_N &= 0 \mbox{ in } \R^{n+1}_+,\\
\tilde{w}_N & = \phi_N \mbox{ on } \R^n \times \{0\}.
\end{align*}
We define the difference 
\begin{align}
\label{eq:error}
r_N(x,x_{n+1}):= \tilde{w}_N(x, x_{n+1}) - \tilde{u}_N(x,x_{n+1}),
\end{align}
as the error quantifying how well the function $\tilde{u}_N$ approximates the true solution $\tilde{w}_N$. Observe that (in the weak sense)
\begin{equation}\label{eq:remindereq}
	\begin{split}
		\nabla\cdot x_{n+1}^{1-2s}\tilde{\gamma}(x)\nabla r_N&=-\nabla\cdot x_{n+1}^{1-2s}\tilde{\gamma}(x)\nabla \tilde{u}_N \text{ in }\R^{n+1}_+,\\
		r_N&=0 \text{ on }\R^n\times\{0\}.
	\end{split}
\end{equation}
Let us fix a cut-off function on the normal direction: let $ \zeta(x_{n+1})\in C^\infty_c([0,\infty)) $ be such that 
\[ \zeta(x_{n+1})=1\text{ for }x_{n+1}\in[0,1/2],\quad \zeta(x_{n+1})=0\text{ for }x_{n+1}\geq 1, \]
and let $ \zeta_N(x_{n+1}):=\zeta(\sqrt{N}x_{n+1}) $. Using this notation, we turn to the proof of Theorem \ref{thm:Eucl}.

\begin{proof}[Proof of Theorem \ref{thm:Eucl}]
We present the proof under the assumption  that $ \gamma,c\in C^{2k} $ with $ k\geq 1 $ so that we can keep track of the asymptotics in $ N $ more precisely. 

\emph{Step 1:} The strategy of the proof of Theorem \ref{thm:Eucl} is to understand the dominant terms in the expansion of $\inp{\Lambda_\gamma \phi_N,\phi_N}_{H^{-s}, H^s}$ as $ N\to\infty $. To this end, we use the definition of the Dirichlet-to-Neumann map and split the contribution as follows
\[\begin{split}
	\inp{\Lambda_\gamma \phi_N, \phi_N}_{H^{-s}(\R^n), H^s(\R^n)}&=\int_{\R^{n+1}_+}x_{n+1}^{1-2s}\tilde{\gamma}\nabla \tilde{w}_N\cdot \nabla (\zeta_N\ol{\tilde{u}_N})dxdx_{n+1}\\
	&=\int_{\R^{n+1}_+}x_{n+1}^{1-2s}\tilde{\gamma}\nabla \tilde{u}_N\cdot\nabla(\zeta_N\ol{\tilde{u}_N})+\int_{\R^{n+1}_+}x_{n+1}^{1-2s}\tilde{\gamma}\nabla r_N\cdot\nabla (\zeta_N\ol{\tilde{u}_N})\\
	&=:I+II.
\end{split} \]
Building on this decomposition, we will show that the leading order term in $ N $ contains information about $ c(0)^{\frac{1}{2s}}C_0 $ where $  C_0=\sqrt{\sum_{j,l=1}^n \gamma_{jl}(0)\alpha_j\alpha_l} $.

\emph{Step 2: Estimate of $ \nabla\tilde{u}_N $.} To estimate $ I $ and $ II $, we will show that 
\begin{equation}\label{eq:approxsolgradient}
	\nabla\tilde{u}_N(x,x_{n+1})=e^{iN\alpha\cdot x}\begin{pmatrix}
		\alpha iN \eta_N(x)(C_0Nx_{n+1})^sK_s(C_0Nx_{n+1})+R_N^1(x,x_{n+1}) \\
		-\eta_N(x)(C_0N)^{s+1}x_{n+1}^sK_{s-1}(C_0Nx_{n+1})+R_N^2(x,x_{n+1})
	\end{pmatrix},
\end{equation}
where 
\[\begin{split}
	R_N^1(x,x_{n+1})&=i\alpha\sum_{r=1}^{2k}N^{1-\frac{r}{2}}\sum_{\ell=1}^r\sum_{\rho=1}^{\binom{r-1}{\ell-1}}P_r^{\ell,\rho}(\sqrt{N}x)(C_0Nx_{n+1})^sw_r^{\ell,\rho}(C_0Nx_{n+1})\\
	&\quad+\sqrt{N}(\nabla_x\eta)(\sqrt{N}x)(C_0Nx_{n+1})^sK_s(C_0Nx_{n+1})\\
	&\quad+\sum_{r=1}^{2k}N^{\frac{1}{2}-\frac{r}{2}}\sum_{\ell=1}^{r}\sum_{\rho=1}^{\binom{r-1}{\ell-1}}(\nabla_xP_r^{\ell,\rho})(\sqrt{N}x)(C_0Nx_{n+1})^sw_r^{\ell,\rho}(C_0Nx_{n+1}),\\
	R_N^2(x,x_{n+1})&=\sum_{r=1}^{2k}N^{-\frac{r}{2}}\sum_{\ell=1}^{r}\sum_{\rho=1}^{\binom{r-1}{\ell-1}}P_r^{\ell,\rho}(\sqrt{N}x)\frac{d}{dx_{n+1}}\left((C_0Nx_{n+1})^sw_r^{\ell,\rho}(C_0Nx_{n+1}) \right).
\end{split}
\]
For the estimates in Step 3 below, we set $ x=z/\sqrt{N} $ and $ x_{n+1}=z_{n+1}/N $ in the above, and then apply Lemma \ref{prop:DtN_approx} (a) (using the asymptotics for $ w^{\ell,\rho}_r $ to estimate $ R_N^1 $ and the asymptotics for $\frac{d}{dt}((At)^sw_r^{\ell,\rho}(At))$ to estimate $ R_N^2 $) to obtain 
\begin{equation}\label{eq:RN1RN2}
	\begin{split}
		R_N^1\left(\frac{z}{\sqrt{N}},\frac{z_{n+1}}{N} \right)&=\sqrt{N}O(1)\text{ as }z_{n+1}\to 0, \\
		R_N^1\left(\frac{z}{\sqrt{N}},\frac{z_{n+1}}{N} \right)&=\sqrt{N}O\left(z_{n+1}^{s+2k-\frac{1}{2}}e^{-C_0z_{n+1}}\right) \text{ as }z_{n+1}\to \infty,\\
		R_N^2\left(\frac{z}{\sqrt{N}},\frac{z_{n+1}}{N} \right)&= \sqrt{N}O(z_{n+1}^{2s-1}) \text{ as }z_{n+1}\to 0,\\
		R_N^2\left(\frac{z}{\sqrt{N}},\frac{z_{n+1}}{N} \right)&= \sqrt{N}O\left(z_{n+1}^{s+2k-\frac{1}{2}}e^{-C_0z_{n+1}}\right) \text{ as }z_{n+1}\to \infty,
	\end{split}
\end{equation}
where the implicit constants in the above are independent of $ N $.

\emph{Step 3: Estimate of $ I $.} Let $ D_N:=\{|x|\leq \frac{1}{\sqrt{N}}\} $ be the region that contains the support of $ \phi_N $. As $\zeta_N(x_{n+1})\equiv 1$ for $ x_{n+1}\in[0,\frac{1}{2\sqrt{N}}] $, we write 
\[
\begin{split}
	I&=\int_{D_N}\int_{0}^{\frac{1}{2\sqrt{N}}}x_{n+1}^{1-2s}\tilde{\gamma}\nabla\tilde{u}_N\cdot\ol{\nabla\tilde{u}_N}+\int_{D_N}\int_{\frac{1}{2\sqrt{N}}}^{1}x_{n+1}^{1-2s}\tilde{\gamma}\nabla\tilde{u}_N\cdot \nabla(\zeta_N\ol{\tilde{u}_N}) \\
	&=:I_1+I_2.
\end{split} \]
For $ I_2 $, recalling Lemma \ref{lem:Bessel} (d), Proposition \ref{prop:DtN_approx} (a) and \eqref{eq:RN1RN2}, we note that both $ \tilde{u}_N $ and $ |\nabla\tilde{u}_N| $ decay exponentially as $ x_{n+1}\to\infty $. Hence, it follows that
\[|I_2|=O(e^{-C_0\frac{\sqrt{N}}{2}})\text{ as }N\to\infty. \]
For $ I_1 $, by \eqref{eq:approxsolgradient} and the definition of $ \tilde{\gamma} $, we compute
\[I_1=I_3+I_4+I_5+I_6+I_7, \]
where 
\[ \begin{split}
	I_3&=\int_{D_N}\int_{0}^{\frac{1}{2\sqrt{N}}}x_{n+1}^{1-2s}N^2\eta_N^2(x)(C_0Nx_{n+1})^{2s}K_s^2(C_0Nx_{n+1})c(x)\gamma(x)\alpha\cdot \alpha dxdx_{n+1},\\
	I_4&=\int_{D_N}\int_{0}^{\frac{1}{2\sqrt{N}}}x_{n+1}\eta_N^2(x)c(x)(C_0N)^{2s+2}K_{s-1}^2(C_0Nx_{n+1})dxdx_{n+1},\\                |I_5|&\leq2\left|\int_{D_N}\int_{0}^{\frac{1}{2\sqrt{N}}}N\eta_N(x)(C_0Nx_{n+1})^sK_s(C_0Nx_{n+1})\gamma(x)\alpha\cdot R_N^1(x,x_{n+1})x_{n+1}^{1-2s}dxdx_{n+1} \right|,\\                
	|I_6|&\leq 2\left|\int_{D_N}\int_{0}^{\frac{1}{2\sqrt{N}}}\eta_N(x)(C_0N)^{s+1}K_{s-1}(C_0Nx_{n+1}) R_N^2(x,x_{n+1}) x_{n+1}^{1-s}dxdx_{n+1}    \right|,    \\
	I_7&=\int_{D_N}\int_{0}^{\frac{1}{2\sqrt{N}}}x_{n+1}^{1-2s}\left(\gamma(x) R_N^1(x,x_{n+1})\cdot\ol{R_N^1(x,x_{n+1})}+|R_N^2(x,x_{n+1})|^2 \right)dxdx_{n+1}.
\end{split} \]
For $ I_3 $, substituting in the Taylor expansion $ c(x)\gamma(x)=c(0)\gamma(0)+\sum_{1\leq|\beta|\leq 2k}c_{\beta}N^{-\frac{|\beta|}{2}}\frac{z^\beta}{\beta!}+o(N^{-k}) $ and carrying out a change of variables in the higher order terms, we obtain 
\[\begin{split}
	I_3&=\int_{D_N}\int_{0}^{\frac{1}{2\sqrt{N}}}x_{n+1}^{1-2s}N^2\eta_N^2(x)(C_0Nx_{n+1})^{2s}K_s^2(C_0Nx_{n+1})c(0)\gamma(0)\alpha\cdot\alpha\\
	&\quad+O(N^{\frac{1}{2}+2s-\frac{n}{2}-1})\int_{|z|<1}\int_{0}^{\frac{\sqrt{N}}{2}}\eta^2(z)z_{n+1}K_s^2(C_0z_{n+1})dzdz_{n+1}.
\end{split}
\]
By the asymptotic behaviour of $ K_s $ in Lemma \ref{lem:Bessel}, we obtain 
\begin{align*}
 \int_{|z|<1}\int_{0}^{\infty}\eta^2(z)z_{n+1}K_s^2(C_0z_{n+1})dzdz_{n+1}<\infty . 
\end{align*} 
 Therefore, performing a change of variables as in \eqref{eq:scaling}, we deduce that
\[\begin{split}
	I_3&=O(N^{\frac{1}{2}+2s-\frac{n}{2}-1})+c(0)\gamma(0)\alpha\cdot\alpha C_0^{2s}N^{1+2s-\frac{n}{2}-1}\int_{|z|<1}\int_{0}^{\frac{\sqrt{N}}{2}}z_{n+1}\eta^2(z)K_s^2(C_0z_{n+1})dzdz_{n+1}\\
	&=O(N^{-\frac{1}{2}+2s-\frac{n}{2}})+ c(0)C_0^{2s+2}N^{2s-\frac{n}{2}}\frac{1}{C_0^2}\int_{0}^{C_0\frac{\sqrt{N}}{2}}z_{n+1}K_s^2(z_{n+1})dz_{n+1},
\end{split}
\]
where in the last line we have used $ \int_{|z|<1}\eta^2(z)dz=1 $ and the definition of $ C_0 $. Again by Lemma \ref{lem:Bessel} and the fact that $ 1-2s>-1 $, we find
\[c_1:=\int_{0}^{\infty}z_{n+1}K_s^2(z_{n+1})dz_{n+1}<\infty. \]
Therefore, we conclude that 
\begin{equation}\label{eq:I3asym}
	\lim_{N\to\infty}\frac{I_3}{N^{2s-\frac{n}{2}}}=\lim_{N\to\infty} \left(\frac{O(N^{-\frac{1}{2}+2s-\frac{n}{2}})}{N^{2s-\frac{n}{2}}}\right)+c(0)c_1C_0^{2s}=c_1 c(0) C_0^{2s}.
\end{equation}
Similarly, for $ I_4 $,  first substituting in $ c(x)=c(0)+O(N^{-\frac{1}{2}}) $ in $ D_N $, secondly changing variables as in \eqref{eq:scaling} and then considering the transformation $ z_{n+1}\mapsto \frac{z_{n+1}}{C_0} $, we are led to 
\[I_4=N^{2s-\frac{n}{2}}c(0)C_0^{2s}\int_{0}^{C_0\frac{\sqrt{N}}{2}}z_{n+1}K^2_{s-1}(z_{n+1})dz_{n+1} + O(N^{2s-\frac{1}{2}-\frac{n}{2}}).\]
Hence, if we define 
\[c_2:=\int_{0}^{\infty} z_{n+1}K^2_{s-1}(z_{n+1})dz_{n+1},\]
we find that
\begin{equation}\label{eq:I4asym}
	\lim_{N\to\infty}\frac{I_4}{N^{2s-\frac{n}{2}}}=c_2 c(0)C_0^{2s}.
\end{equation}
Note that $ c_2<\infty $ by Lemma \ref{lem:Bessel}. Next we show that the quantities $ I_5, I_6,I_7 $ are of order $ o(N^{2s-\frac{n}{2}}) $ as $ N\to\infty $. We deal with these terms by first changing variables to $ z,z_{n+1} $ and then using the asymptotic relations given by \eqref{eq:RN1RN2} and Lemma \ref{lem:Bessel}. 
For $ I_5 $, 
\[\begin{split}
	|I_5|&\leq C N^{-\frac{n}{2}-1}\int_{|z|<1}\int_{0}^{\frac{\sqrt{N}}{2}}N\eta(z)(C_0z_{n+1})^sK_s(C_0z_{n+1})\left|R_N^1\left(\frac{z}{\sqrt{N}}, \frac{z_{n+1}}{N} \right)\right| N^{2s-1}z_{n+1}^{1-2s}\\
	&\leq C N^{-\frac{n}{2}+2s-1}\int_{|z|<1}\int_{0}^{\infty}\eta(z)(z_{n+1})^{1-s}K_s(C_0z_{n+1})\left|R_N^1\left(\frac{z}{\sqrt{N}}, \frac{z_{n+1}}{N} \right)\right|dzdz_{n+1},
\end{split} \] 
for some $C>0$.
By the asymptotic behaviour of $ K_s $ and $ R_N^1 $ as $ z_{n+1}\to 0 $,  the integrand in the above expression is of order $ O(1) $, so it is integrable around $ z_{n+1}=0 $. As $ N\to \infty $, by \eqref{eq:RN1RN2},
\[|I_5|=O\left(N^{-\frac{1}{2}-\frac{n}{2}+2s} \right), \]
from which we deduce that 
\begin{equation}\label{eq:I5asym}
	\lim_{N\to\infty}\frac{|I_5|}{N^{2s-\frac{n}{2}}}=0.
\end{equation}
Similarly, the change of variables \eqref{eq:scaling} shows that for some $C>0$
\[\begin{split}
	|I_6|&\leq CN^{2s-\frac{n}{2}-1} \int_{|z|<1}\int_{0}^{\infty}\eta(z)K_{s-1}(C_0z_{n+1})\left|R_N^2(z/\sqrt{N},z_{n+1}/N) \right|z_{n+1}^{1-s}dzdz_{n+1},\\
	|I_7|&\leq CN^{2s-\frac{n}{2}-2}\int_{|z|<1}\int_{0}^{\infty}z_{n+1}^{1-2s}\left( \left|R_N^1\left(\frac{z}{\sqrt{N}},\frac{z_{n+1}}{N}\right) \right|^2+ \left|R_N^2\left(\frac{z}{\sqrt{N}},\frac{z_{n+1}}{N}\right) \right|^2  \right)dzdz_{n+1}.
\end{split} \]
Now we estimate $ |I_6| $ by Lemma \ref{lem:Bessel} (in particular the estimate for $ K_{s-1} $) and \eqref{eq:RN1RN2}, and estimate $ |I_7| $ by Lemma \ref{lem:Bessel} (c)(d) and \eqref{eq:RN1RN2}. This leads to
\[|I_6|=O\left(N^{-\frac{1}{2}-\frac{n}{2}+2s} \right),\quad |I_7|=O\left(N^{-1-\frac{n}{2}+2s} \right),\] 
which implies 
\begin{equation}\label{eq:I6I7}
	\lim_{N\to\infty}\frac{I_6}{N^{2s-\frac{n}{2}}}
	=\lim_{N\to\infty}\frac{I_7}{N^{2s-\frac{n}{2}}}=0.
\end{equation}
Combining the estimates for $ I_2 $, \eqref{eq:I3asym}, \eqref{eq:I4asym}, \eqref{eq:I5asym} and \eqref{eq:I6I7}, we obtain
\[\lim_{N\to\infty}\frac{I}{N^{2s-\frac{n}{2}}}= c(0)C_0^{2s}(c_1+c_2). \]
This concludes the discussion of the asymptotic behaviour of the contribution $I$.

In order to estimate the energy of the approximate solution, we note that if we replace $ I_2 $ by 
\[ \tilde{I}_2=\int_{D_N}\int_{\frac{1}{2\sqrt{N}}}^{\infty}x_{n+1}^{1-2s}|\nabla\tilde{u}_N|^2 dx dx_{n+1},  \]
then, by the exponential decay, we find for some $ p,q>0 $, as $ N\to\infty $,
\[ \tilde{I}_2=O\left( N^p\int_{\frac{\sqrt{N}}{2}}^{\infty}t^qe^{-C_0t}dt \right)=O(e^{-\frac{C_0\sqrt{N}}{4}}).\]
Therefore, together with the estimate for $ I_1 $, we further obtain an energy estimate for the approximate solution, as $ N\to\infty $,
\begin{equation}\label{eq:approxsolestimate}
	\int_{D_N}\int_{0}^{\infty}|\nabla\tilde{u}_N|^2x_{n+1}^{1-2s}dxdx_{n+1}\leq \tilde{C}N^{2s-\frac{n}{2}} 
\end{equation}
for some constant $ \tilde{C} $ only depending on  $ c_2\gamma_+ $ where $ c_2 $ is the upper bound of $ c(\cdot) $ introduced in \eqref{eq:est_c} and $ \gamma_+ $ is the supremum over $ x\in\R^n $ of the largest eigenvalue of $ \gamma(x) $.

\emph{Step 4: Estimate of $ II $.} We split the integral as we did for $ I $ to read 
\[\begin{split}
	II&=\int_{D_N}\int_{0}^{\frac{1}{2\sqrt{N}}}x_{n+1}^{1-2s}\tilde{\gamma}\nabla r_N\cdot \nabla\ol{\tilde{u}_N}+\int_{D_N}\int_{\frac{1}{2\sqrt{N}}}^{1}x_{n+1}^{1-2s}\tilde{\gamma}\nabla r_N \cdot \nabla(\zeta_N\ol{\tilde{u}_N}) \\
	&=:II_1+II_2.
\end{split}
\]
To estimate $ II_2 $, by Cauchy-Schwarz and then the energy estimates for $ r_N $ (recall that $ r_N $ is a solution of \eqref{eq:remindereq}),
\[ \begin{split}
	|II_2|&\leq C\left(\int_{D_N}\int_{\frac{1}{2\sqrt{N}}}^{1} |\nabla r_N|^2x_{n+1}^{1-2s} \right)^{1/2}\left(\int_{D_N}\int_{\frac{1}{2\sqrt{N}}}^{1}|\nabla(\zeta_N\tilde{u}_N)|^2 x_{n+1}^{1-2s}  \right)^{\frac{1}{2}}\\
	&\leq C\left(\int_{D_N}\int_{0}^{\infty}|\nabla\tilde{u}_N|^2x_{n+1}^{1-2s} \right)^{1/2}\left(\int_{D_N}\int_{\frac{1}{2\sqrt{N}}}^{1}\left(|(\nabla\zeta_N)\tilde{u}_N|^2+|(\nabla\tilde{u}_N)\zeta_N|^2\right) x_{n+1}^{1-2s}  \right)^{\frac{1}{2}}\\
	&=O\left(N^{s-\frac{n}{4}} \right)O\left(e^{-\frac{C_0N^{1/2}}{4}} \right)\text{ as }N\to\infty,
\end{split} \]
where for the last line we used \eqref{eq:approxsolestimate} and exponential decay of $ \tilde{u}_N $ and $| \nabla\tilde{u}_N| $, and $ C $ is a constant only depending on $ \gamma $. 

We next estimate $ II_1 $. Integrating by parts leads to 
\[\begin{split}
	II_1&=-\int_{D_N}\int_{0}^{\frac{1}{2\sqrt{N}}}r_N \nabla\cdot\left(\tilde{\gamma}x_{n+1}^{1-2s}\nabla\ol{\tilde{u}_N} \right)dxdx_{n+1}+\int_{\Gamma_1\cup\Gamma_2\cup\Gamma_3}r_Nx_{n+1}^{1-2s}\tilde{\gamma}\nabla\ol{\tilde{u}_N}\cdot \nu d\mathcal{H}^{n-1}\\
	&=:A+B,
\end{split}  \]
where  $\Gamma_1=\{|x|=\frac{1}{\sqrt{N}},\:0\leq x_{n+1}\leq\frac{1}{2\sqrt{N}} \}$, $\Gamma_2=\{|x|\leq \frac{1}{\sqrt{N}},\:x_{n+1}=0 \}$ and  $\Gamma_3=\{|x|\leq \frac{1}{\sqrt{N}},\:x_{n+1}=\frac{1}{2\sqrt{N}} \}$ and $ \nu $ denotes the corresponding outer unit normal vector field on $ \Gamma_1,\Gamma_2,\Gamma_3 $. We first estimate $ A $:
\begin{align}
\label{eq:error_1}
\begin{split}
	|A|&=\left| \int_{D_N}\int_{0}^{\frac{1}{2\sqrt{N}}}r_Nx_{n+1}^{-\frac{1+2s}{2}} x_{n+1}^{\frac{1+2s}{2}}\nabla\cdot(\tilde{\gamma}x_{n+1}^{1-2s}\nabla\ol{\tilde{u}_N})dxdx_{n+1}\right|\\
	&\leq \left(\int_{D_N}\int_{0}^{\infty} |r_N|^2x_{n+1}^{-1-2s}\right)^{1/2}\left(\int_{D_N}\int_{0}^{\frac{1}{2\sqrt{N}}}x_{n+1}^{1+2s}|\nabla\cdot\tilde{\gamma}x_{n+1}^{1-2s}\nabla\tilde{u}_N|^2 \right)^{1/2}\\
	&\leq C_s\left(\int_{\R^{n+1}_+}|\nabla r_N|^2x_{n+1}^{1-2s} \right)^{1/2}\left( \int_{D_N}\int_{0}^{\frac{1}{2\sqrt{N}}}x_{n+1}^{1+2s}o\left(N^{2-2k+4s} \right)(Nx_{n+1})^{1-2s} \right)^{1/2},
\end{split} 
\end{align}
where we have used that $ \tilde{u}_N $ is an approximate solution in the sense given in Proposition \ref{prop:DtN_approx} (c) and
the  Hardy's inequality in the following form: for any $\phi\in C^\infty_c(\R^{n+1}_+)  $ and $ s\in(0,1) $,
\begin{equation}\label{eq:Hardy2}
	\int_{\R^{n+1}_+}\phi^2x_{n+1}^{-1-2s}dxdx_{n+1}\leq \frac{1}{s^2}\int_{\R^{n+1}_+}(\p_{n+1}\phi)^2x_{n+1}^{1-2s}dxdx_{n+1},
\end{equation} 
see Exercise 1.2.8 in \cite{Gr08}. Moreover, we used that $r_N = 0$ on $x_{n+1}=0$ in a trace sense together with density properties.
Then, using that by energy estimates and the compact support of $\tilde{u}_N$,  the first term on the right hand side of \eqref{eq:error_1} is estimated by \eqref{eq:approxsolestimate}
\begin{align*}
\int_{\R^{n+1}_+}|\nabla r_N|^2x_{n+1}^{1-2s}
\leq C\int_{D_N}\int_{0}^{\frac{1}{2\sqrt{N}}} |\nabla \tilde{u}_N|^2x_{n+1}^{1-2s} = O(N^{s-\frac{n}{4}}).
\end{align*}
 For the second term, we change variables to $ z,z_{n+1} $, which leads to
 \begin{align*}
& \int_{D_N}\int_{0}^{\frac{1}{2\sqrt{N}}}x_{n+1}^{1+2s}o\left(N^{2-2k+4s} \right)(Nx_{n+1})^{1-2s} dx dx_{n+1}\\
& = o(N^{2-2k+4s})\left(\int_{|z|<1}\int_{0}^{\frac{\sqrt{N}}{2}}N^{-\frac{n}{2}-2s-2}z_{n+1}^2dzdz_{n+1} \right)\\
& = o(N^{-2k-2s-\frac{n}{2}})\int_{0}^{\frac{\sqrt{N}}{2}}z_{n+1}^2dzdz_{n+1}= o(N^{-2k+2s-\frac{n}{2} + \frac{3}{2}} ).
 \end{align*}
 Combining these arguments, we arrive at
\[ \begin{split}
	|A|&= O(N^{s-\frac{n}{4}})o(N^{-k+s-\frac{n}{4} + \frac{3}{4}})=o\left(N^{-\frac{n}{2}-k+2s+\frac{3}{4}} \right).
\end{split} \]
Consequently, if $ k\geq 1 $,
\[\lim_{N\to\infty}\frac{A}{N^{2s-\frac{n}{2}}}= \lim_{N\to\infty} o(N^{-k+\frac{3}{4}})=0.  \]
It remains to estimate $ B $. To this end, we first observe that $ r_N\in\dot{H}^1_0(\R^{n+1}_+,x_{n+1}^{1-2s}) $. Thus, $ r_N|_{\Gamma_2}=0 $. Next, in order to deal with the contribution on $\Gamma_3$, we establish a trace estimate for $ r_N $. Suppose $ u\in C^\infty_c(\R^{n+1}_+) $, then for any $ x\in\R^n $,
\[ \begin{split}
	\left|u\left(x,\frac{1}{2\sqrt{N}} \right)\right|&=\left|\int_{0}^{\frac{1}{2\sqrt{N}}}\p_{n+1}u (x,x_{n+1})dx_{n+1}\right|=\left|\int_{0}^{\frac{1}{2\sqrt{N}}}\p_{n+1}u (x,x_{n+1})x_{n+1}^{\frac{1-2s}{2}}x_{n+1}^{-\frac{1-2s}{2}}dx_{n+1}\right|\\
	&\leq \frac{N^{-s/2}}{2^{2s+1}s} \left( \int_{0}^{\frac{1}{2\sqrt{N}}}|\p_{n+1}u|^2x_{n+1}^{1-2s}dx_{n+1} \right)^{1/2},
\end{split} \]
and therefore, by density,
\[\begin{split}
	\int_{\R^n}\left|u\left(x,\frac{1}{2\sqrt{N}}\right)\right|^2dx&\leq\left(\frac{N^{-s/2}}{2^{2s+1}s} \right)^2 \int_{\R^n}\int_{0}^{\frac{1}{2\sqrt{N}}}|\p_{n+1}u|^2x_{n+1}^{1-2s}dxdx_{n+1}
\end{split}. \]
Hence, we conclude that for $ r_N $, it holds that (recall the definition of $\Gamma_3=\{|x|\leq \frac{1}{\sqrt{N}},\:x_{n+1}=\frac{1}{2\sqrt{N}} \}$),
\[\left(\int_{\Gamma_3} |r_N|^2 \right)^{1/2}\leq C_sN^{-s/2}\|{r_N}\|_{\dot{H}^1(\R^{n+1}_+,x_{n+1}^{1-2s})}.\]
Also, by Proposition \ref{prop:DtN_approx},  $ \tilde{u}_N $ is compactly supported in $ \{|x|\leq \frac{1}{\sqrt{N}}\} $ and hence $ \tilde{u}_N|_{\Gamma_1}=0 $. Consequently, collecting the above estimates,
\[ \begin{split}
	|B|&=\left|\int_{\Gamma_3}r_Nx_{n+1}^{1-2s}\tilde{\gamma}\nabla\ol{\tilde{u}_N}\cdot \nu \right|\leq C\left(\int_{\Gamma_3} |r_N|^2 \right)^{1/2}N^{s-\frac{1}{2}}\left(\int_{\Gamma_3}|\nabla \tilde{u}_N|^2 \right)^{1/2}\\
	&= \|{r_N}\|_{\dot{H}^1(\R^{n+1}_+,x_{n+1}^{1-2s})}N^{\frac{s-1}{2}}O\left(e^{\frac{-C_0\sqrt{N}}{2}} \right)=O\left(e^{-\frac{C_0\sqrt{N}}{4}} \right).
\end{split} \]
Combining the estimates for $ II_2, A,B $, we obtain that 
\[\lim_{N\to\infty}\frac{II}{N^{2s-\frac{n}{2}}}=0 .\]
Combining the estimates for $ I $ in Step 3 and for $ II $ in Step 4, then allows us to conclude \eqref{eq:aim_PDE} and to hence deduce the desired limiting identity by specializing to the case that $c(x) = 1$.

\emph{Step 5: Normalization.}
Last but not least, the uniform bound for the rescaled data $(N^{-s + \frac{n}{4}} \phi_N)_{N \in \N}$ follows from the fact that by trace estimates (see \eqref{eq:trace}), compactness of the supports of the functions $ \phi_N $ and the energy estimate \eqref{eq:approxsolestimate} of $\tilde{u}_N$, it holds
\begin{align*}
\|N^{-s + \frac{n}{4}} \phi_N\|_{H^s(\R^n)} \leq C N^{-s + \frac{n}{4}}\| \tilde{u}_N \|_{\dot{H}^1(\R^{n+1}_+, x_{n+1}^{1-2s})} \leq \tilde{C}< \infty.
\end{align*}
This concludes the proof.
\end{proof}

In concluding this section, we discuss the proof of Corollary \ref{cor:stab_Eucl}.

\begin{proof}[Proof of Corollary \ref{cor:stab_Eucl}]
	We fix an arbitrary point $ x_0\in\R^n $. Then Theorem \ref{thm:Eucl} implies that 
	\[\begin{split}
		\lim\limits_{N \rightarrow \infty} N^{-2s+\frac{n}{2}} \inp{(\Lambda_{\gamma_1}-\Lambda_{\gamma_2})(\phi_N),\phi_N}_{H^{-s}(\R^n), H^{s}(\R^n)} = (c_1 + c_2)((C_{\alpha}(\gamma_1)(x_0))^{2s} - (C_{\alpha}(\gamma_2)(x_0))^{2s}).
	\end{split} \]
	Taking the absolute value and considering $\alpha \in \mathbb{S}^{n-1}$, the limit $ N\to\infty $ leads to 
	\begin{align}
	\label{eq:stability_1_proof}
	\begin{split}
		&|(c_1 + c_2)((C_{\alpha}(\gamma_1)(x_0))^{2s} - (C_{\alpha}(\gamma_2)(x_0))^{2s})|\\
		& =\lim_{N\to\infty} N^{-2s + \frac{n}{2}} \left|\inp{(\Lambda_{\gamma_1}-\Lambda_{\gamma_2})(\phi_N),\phi_N}_{H^{-s}(\R^n), H^s(\R^n)} \right|\\
		&\leq  \lim_{N\to\infty}\|\Lambda_{\gamma_1}-\Lambda_{\gamma_2}\|_{\mathcal{L}(H^{s}(\R^n),H^{-s}(\R^n))} N^{-2s + \frac{n}{2}} \|\phi_N\|^2_{H^s(\R^n)}\\
		& \leq C \|\Lambda_{\gamma_1}-\Lambda_{\gamma_2}\|_{\mathcal{L}(H^{s}(\R^n),H^{-s}(\R^n))}.
	\end{split}
	\end{align}
	Here we have used that the rescaled boundary data $(N^{-s + \frac{n}{4}} \phi_N)_{N \in \N} $ given by Theorem \ref{thm:Eucl} are uniformly bounded in $ H^s(\R^n) $ with respect to $ x_0$ and $N$. It remains to argue that we can pass to the metrics instead of considering the difference of the proxies $(C_{\alpha}(\gamma_j)(x_0))^{2s} $, $j \in \{1,2\}$. To this end, we first observe that the constant $C>0$ on the right hand side of \eqref{eq:stability_1_proof}  is independent of $\alpha \in \mathbb{S}^{n-1}$. As a consequence,
\begin{align*}
	\begin{split}
		&\sup\limits_{\alpha \in \mathbb{S}^{n-1}}\left|(C_{\alpha}(\gamma_1)(x_0))^{2s} - (C_{\alpha}(\gamma_2)(x_0))^{2s}\right| 
		 \leq C \|\Lambda_{\gamma_1}-\Lambda_{\gamma_2}\|_{\mathcal{L}(H^{s}(\R^n),H^{-s}(\R^n))}.
	\end{split}
\end{align*}	
Now, using the local Lipschitz continuity of the maps $[0,\infty) \ni x \mapsto x^{\frac{1}{s}}$ for $s\in (0,1)$, by the uniform boundedness of the metrics $\gamma_1, \gamma_2$ (which in turn follows from the symmetry of $\gamma_{j}$ and the assumption that $\xi \cdot \gamma_j \xi \leq C_1 |\xi|^2$ for some $0<C_1< \infty$, and for all $\xi \in \R^n$ and $j \in \{1,2\}$), we obtain
\begin{align*}
	\begin{split}
		&\sup\limits_{\alpha \in \mathbb{S}^{n-1}}\left|C_{\alpha}(\gamma_1)(x_0) - C_{\alpha}(\gamma_2)(x_0)\right|\\
		& \leq C \sup\limits_{\alpha \in \mathbb{S}^{n-1}}\left|(C_{\alpha}(\gamma_1)(x_0))^{2s} - (C_{\alpha}(\gamma_2)(x_0))^{2s}\right|
		 \leq C \|\Lambda_{\gamma_1}-\Lambda_{\gamma_2}\|_{\mathcal{L}(H^{s}(\R^n),H^{-s}(\R^n))}.
	\end{split}
\end{align*}	
Finally, due to the equivalence of norms in finite-dimensional spaces, we have that 
\begin{align*}
|\gamma_1(x_0) - \gamma_2(x_0)|
\leq C_n \sup\limits_{\ell, j \in \{1,\dots,n\}}|\gamma_{1,j\ell}(x_0) - \gamma_{2,j\ell}(x_0)|.
\end{align*}	
Hence, it remains to bound $|\gamma_{1,j\ell}(x_0) - \gamma_{2,j\ell}(x_0)|$ for $\ell, j \in \{1,\dots,n\}$. This follows by noting that for $\alpha = e_j$, we have that $C_{\alpha}(\gamma_1)(x_0) - C_{\alpha}(\gamma_2)(x_0) = \gamma_{1,jj}(x_0)- \gamma_{2,jj}(x_0)$ and for $\alpha = \frac{1}{\sqrt{2}}( e_{\ell}+ e_{j})$ we have  $C_{\alpha}(\gamma_1)(x_0) - C_{\alpha}(\gamma_2)(x_0) = \frac{1}{2}((\gamma_{1,jj}(x_0)+ \gamma_{1,\ell \ell}(x_0)+ 2\gamma_{1,\ell j}(x_0))-  (\gamma_{2,jj}(x_0)+ \gamma_{2,\ell \ell}(x_0)+ 2\gamma_{2,\ell j}(x_0)))$. As a consequence, considering such values of $\alpha \in \mathbb{S}^{n-1}$, we obtain
\begin{align*}
|\gamma_1(x_0) - \gamma_2(x_0)|
& \leq C_{n} \sup\limits_{\ell, j}|\gamma_{1,j\ell}(x_0) - \gamma_{2,j\ell}(x_0)|\\
& \leq C_{n}\sup\limits_{\alpha \in \mathbb{S}^{n-1}}\left|C_{\alpha}(\gamma_1)(x_0) - C_{\alpha}(\gamma_2)(x_0)\right|\\
& \leq C_{n,s} \|\Lambda_{\gamma_1}-\Lambda_{\gamma_2}\|_{\mathcal{L}(H^{s}(\R^n),H^{-s}(\R^n))}.
\end{align*}	
As all constants are uniform in $x_0 \in \R^n$, this concludes the proof.
\end{proof}

\subsection{Proof of the geometric versions in Theorems \ref{thm:bdry_reconstr_ext_nonloc_sol} and \ref{thm:bdry_reconstr_ext_NtD}}
\label{sec:mfd1}

In this section, we briefly indicate how Theorems \ref{thm:bdry_reconstr_ext_nonloc_sol} and \ref{thm:bdry_reconstr_ext_NtD} follow along the same lines as the result of Theorem \ref{thm:Eucl}. Indeed, both results are consequences of the extension perspective which had been introduced in \cite{CS07} for the constant and in \cite{ST10} for the variable coefficient setting. In fact, as outlined in Section \ref{sec:mfd}, it holds that $\tilde{L}_{s,O}(\phi) = c_s \sqrt{\det(g)} \lim\limits_{x_{n+1} \rightarrow 0} x_{n+1}^{1-2s} \p_{n+1} \tilde{u}^{\phi}|_{O}$ with $\tilde{u}^{\phi}$ a solution of \eqref{eq:Dirichlet_Neumann_main} and $c_s \neq 0$ the constant from the Caffarelli-Silvestre extension. We note that, when localized to a tangential coordinate patch,  the equation \eqref{eq:Dirichlet_Neumann_main} is exactly of the form as in \eqref{eq:main_one1D} with the choice $c(x)= \sqrt{\det(g)(x)}$.

Hence, after localizing in tangential directions, with an analogous argument as in the previous section, we can then construct an approximate solution $\tilde{u}_{N}$ with localized and oscillating data $\phi_N$ as above. Without loss of generality, we choose coordinates such that $x_0 =0$. As the functions $ \phi_N $ are localized, in this coordinate patch, we obtain the approximate solutions with the same properties as in Proposition \ref{prop:DtN_approx}. Building on this, with the argument from above, we turn to the proof of Theorems  \ref{thm:bdry_reconstr_ext_nonloc_sol} and \ref{thm:bdry_reconstr_ext_NtD}.

\begin{proof}[Proof of  Theorems \ref{thm:bdry_reconstr_ext_nonloc_sol} and \ref{thm:bdry_reconstr_ext_NtD}]
As discussed above, we assume that $x_0 =0$ and begin by observing that for $\phi_N$ as in the previous section,
\begin{align}
\label{eq:int_by_parts}
\begin{split}
\langle \phi_N, \frac{1}{\sqrt{\det(g)}} \tilde{L}_{s,O}(\phi_N) \rangle_{H^{s}(M,dV_g), H^{-s}(M, dV_g)} =c_s \int\limits_{M \times  \R_+}x_{n+1}^{1-2s} \nabla_{\tilde{g}} \tilde{w}_N \cdot \tilde{g}   \nabla_{\tilde{g}}(\tilde{u}_N \zeta_N) dV_g dx_ {n+1} .
\end{split}
\end{align}
Here $\nabla_{\tilde{g}}$ denotes the gradient with respect to the product metric $\tilde{g}:= g + dx_{n+1}\otimes d x_{n+1}$, $\tilde{u}_N$ is the approximate solution constructed above and $ \tilde{w}_N$ is a solution of 
\[\begin{split}
(\p_{n+1} x_{n+1}^{1-2s} \p_{n+1} + x_{n+1}^{1-2s}\Delta_g)  \tilde{w}_N&=0\text{ in }M\times\R^+\\
\tilde{w}_N&=\phi_N\text{ on  }M.
\end{split} \]
The identity \eqref{eq:int_by_parts} follows from the definition of the solution-to-source data  and its realization through a variable coefficient Caffarelli-Silvestre extension from \cite{ST10}. 
In local coordinates (and by the localization of the approximate solution $\tilde{u}_N$ this is possible), we can write
\begin{align*}
 \int\limits_{M \times  \R_+}x_{n+1}^{1-2s} \nabla_{\tilde{g}} \tilde{w}_N \cdot \tilde{g}   \nabla_{\tilde{g}}(\tilde{u}_N \zeta_N) dV_g dx_ {n+1} 
=  \int\limits_{B_r(0) \times  \R_+} x_{n+1}^{1-2s}\nabla \tilde{w}_N \cdot \tilde{\gamma}   \nabla (\tilde{u}_N \zeta_N) dx dx_ {n+1} .
\end{align*}
where $ \nabla $ on the right hand side above denotes the Euclidean gradient, and in local coordinates $ \tilde{\gamma} $ is defined to be
\[\tilde{\gamma}(x,x_{n+1})=\sqrt{\det g(x)}\begin{pmatrix}
	g^{-1}(x) & 0\\
	0 & 1
\end{pmatrix}. \]

Without loss of generality, we may assume that $r=1$.
 Also we define the error term $ r_N(x,x_{n+1}):=\tilde{w}_N(x,x_{n+1})-\tilde{u}_N(x,x_{n+1}) $, which satisfies $ \nabla_{\tilde{g}}\cdot x_{n+1}^{1-2s}\tilde{g}\nabla_{\tilde{g}} r_N=-\nabla_{\tilde{g}}\cdot x_{n+1}^{1-2s}\tilde{g}\nabla_{\tilde{g}} \tilde{u}_N $ in $ M\times\R^+ $ with $ r_N=0 $ on $ M\times\{0\} $. In particular, we note that  $ \supp\nabla (r_N) \subset B_1(0)\times  \R_+$.
Therefore, passing to local coordinate as above, 
\[\begin{split}
\langle \phi_N, \frac{1}{\sqrt{\det(g)}} \tilde{L}_{s,O}(\phi_N) \rangle_{H^{s}(M,dV_g), H^{-s}(M, dV_g)} =c_s\int_{B_1(0)\times \R_+}x_{n+1}^{1-2s}&\nabla \tilde{u}_N\cdot\tilde{\gamma}\nabla(\zeta_N\ol{\tilde{u}_N})dxdx_{n+1}\\
+c_s\int_{B_1(0)\times \R_+}&x_{n+1}^{1-2s}\nabla r_N\cdot\tilde{\gamma}\nabla(\zeta_N\ol{\tilde{u}_N})dxdx_{n+1}
\end{split}
 \]
Now in this local coordinate patch where $ x_0=0 $, with the choice of $ \tilde{\gamma} $ as above
we have reduced the problem to the Euclidean case that we considered in Sections \ref{sec:DtN_approx_prelim}-\ref{sec:error_bounds_anisotropic}. By setting $u_N:= \phi_N$ these considerations combined with Theorem \ref{thm:Eucl} then conclude the proof. 
Finally, the normalization of the rescaled boundary data again follows by the trace estimates.
\end{proof}

As a final result on the solution-to-source setting, we provide the proof of Corollary \ref{cor:metric2}.

\begin{proof}[Proof of Corollary \ref{cor:metric2}]
Let $\phi_N$ be as above.
Fix any $ x_0\in O $, Theorem \ref{thm:bdry_reconstr_ext_nonloc_sol} yields that the knowledge of 
\begin{align*}
\langle \phi_N, \frac{1}{\sqrt{\det(g)}} \tilde{L}_{s,O}(\phi_N) \rangle_{H^{s}(M,dV_g), H^{-s}(M, dV_g)}
\end{align*}
which allows us to reconstruct, in the limit $N \rightarrow \infty$ the quantity
\begin{align}
\label{eq:bilin_form}
\sum\limits_{j, \ell=1}^{n} \det(g(x_0))^{\frac{1}{2s}} g_{j \ell}^{-1}(x_0) \alpha_j \alpha_{\ell}
\end{align}
for all $\alpha \in \mathbb{S}^{n-1}$. 
In particular, since $(\phi_N)_{N \in \N}$ is localized in $O$, the quantity 
\begin{align*}
\langle \phi_N, \frac{1}{\sqrt{\det(g)}} \tilde{L}_{s,O}(\phi_N)  \rangle_{H^{s}(M,dV_g), H^{-s}(M, dV_g)}
\end{align*}
 consists only of known measurement information.

Now, since $g^{-1}$ is symmetric, the recovery of the bilinear form \eqref{eq:bilin_form} for all $\alpha \in \mathbb{S}^{n-1}$, in turn, implies the recovery of $\det(g(x_0))^{\frac{1}{2s}} g^{-1}(x_0)$. Taking the determinant of this expression, we obtain the recovery of $\det(g(x_0))^{\frac{n}{2s}-1}$. Since $s\in (0,1)$, we have that $\frac{n}{2s}-1 \neq 0$, which implies that we infer the recovery of $\det(g(x_0))$ and, by the recovery of $\det(g(x_0))^{\frac{1}{2s}} g^{-1}(x_0)$, also that of $g^{-1}(x_0)$ and $g(x_0)$. This concludes the proof.
\end{proof}

\section{The Source-to-Solution Problem  -- Boundary Reconstruction for the Anisotropic Extension Problem with Generalized Neumann-to-Dirichlet Data}
\label{sec:NtD}

The setting with Neumann-to-Dirichlet data can be treated similarly as the setting with Dirichlet-to-Neumann data by also adopting an extension perspective. Also in this context, the method from \cite{B01} and \cite{NT01} yields the symbol expansion of the operator on the boundary up to an arbitrary order. As this corresponds to the measurement setting from \cite{FGKU21} we outline the main steps and point out the main differences compared the previous section. As our first main theorem in this section, we provide the analogue of Theorem \ref{thm:Eucl} for Neumann-to-Dirichlet measurements.

As in the previous section, we first consider a closely related problem on $\R^{n+1}_+$. Moreover, similarly as in the section on the Dirichlet-to-Neumann measurements, we consider a slightly more general setting than required for the Euclidean problem in order to simultaneously also treat the geometric variant. The result on the manifold will follow similarly as in the previous section. Consider
\begin{align}
\label{eq:main_one1}
\begin{split}
\nabla \cdot x_{n+1}^{1-2s} \tilde{\gamma}(x) \nabla \tilde{u} & = 0 \mbox{ in } \R^{n+1}_+,\\
 c(x)\lim\limits_{x_{n+1} \rightarrow 0} x_{n+1}^{1-2s} \p_{n+1} \tilde{u} & = f \mbox{ on } \R^n \times \{0\},
\end{split}
\end{align}
where
\begin{align*}
\tilde{\gamma}(x) = c(x)\begin{pmatrix} \gamma(x) & 0 \\ 0& 1 \end{pmatrix}
\end{align*}
is a smooth, bounded, matrix-valued function with $0<c_1 \leq c(x)\leq c_2 < \infty$ a smooth function which only depends on the tangential variables.
We assume that ${\gamma}$ is uniformly elliptic, symmetric and that $\gamma$ only depends on the tangential variables and is independent of the normal one.
We observe that for $f\in C_c^{\infty}(B_1(x_0)) $ with $\int\limits_{\R^n} f(x )dx = 0$, we have existence and uniqueness of solutions of \eqref{eq:main_one1}. 
 Indeed, without loss of generality, we assume that $x_0 =0$ and observe that, since $\hat{f}(0)=0$, a first order expansion yields
\begin{align*}
\int\limits_{B_1(0)} |\xi|^{-2s} |\hat{f}(\xi)|^2 d\xi \leq \int\limits_{B_1(0)} |\xi|^{2-2s} |\nabla_{\xi}\hat{f}(\xi)|^2 d\xi
\leq C \|x f\|_{H^{1}(\R^n)}^2 \leq C< \infty,
\end{align*}
where we used that $2-2s \in (0,2)$.
As a consequence,
\begin{align*}
\int\limits_{\R^n} |\xi|^{-2s} |\hat{f}(\xi)|^2 d\xi
&\leq \int\limits_{B_1(0)} |\xi|^{-2s} |\hat{f}(\xi)|^2 d\xi + \int\limits_{\R^n\setminus B_1(0)} |\xi|^{-2s} |\hat{f}(\xi)|^2 d\xi\\
&\leq C \|x f\|_{H^{1}(\R^n)}^2  + \|f\|_{L^2(\R^n)}^2 \leq C \|x f\|_{H^{1}(\R^n)}^2  + \|f\|_{L^2(B_1(0))}^2 < \infty.
\end{align*}
Therefore, we infer that $f\in \dot{H}^{-s}(\R^n)$. Now, for data $f \in \dot{H}^{-s}(\R^n)$ well-posedness for \eqref{eq:main_one1} then follows from energy arguments.

\begin{lem}
\label{lem:well-posed_NtD}
Let $s\in (0,1)$ and let $f \in \dot{H}^{-s}(\R^n)$. Then there exists a unique solution $\tilde{u} \in \dot{H}^{1}(\R^{n+1}_+, x_{n+1}^{1-2s})$ of \eqref{eq:main_one1} with $\tilde{u}(x,0) \in L^{\frac{2n}{n-2s}}(\R^n)$. Moreover, it satisfies the energy estimates
\begin{align*}
\|\tilde{u}(\cdot,0)\|_{L^{\frac{2n}{n-2s}}(\R^n)} + \|\tilde{u}(\cdot,0)\|_{\dot{H}^s(\R^n)}
+ \|x_{n+1}^{\frac{1-2s}{2}} \nabla \tilde{u}\|_{L^2(\R^{n+1}_+)}
\leq C \|f\|_{\dot{H}^{-s}(\R^n)}.
\end{align*}
\end{lem}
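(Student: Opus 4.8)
The plan is to solve \eqref{eq:main_one1} variationally by the Lax--Milgram lemma, in close analogy with Proposition~\ref{prop:well-posed}, and then to extract all the stated bounds from the trace inequalities \eqref{eq:trace}. Concretely, I would call $\tilde u\in \dot H^1(\R^{n+1}_+,x_{n+1}^{1-2s})$ a weak solution of \eqref{eq:main_one1} if
\begin{align*}
B(\tilde u,\tilde v):=\int_{\R^{n+1}_+}x_{n+1}^{1-2s}\tilde\gamma(x)\nabla\tilde u\cdot\nabla\tilde v\,dx\,dx_{n+1}=\langle f,\tilde v(\cdot,0)\rangle_{\dot H^{-s}(\R^n),\dot H^s(\R^n)}
\end{align*}
for every $\tilde v\in\dot H^1(\R^{n+1}_+,x_{n+1}^{1-2s})$. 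The trace $\tilde v(\cdot,0)$ is well defined in $\dot H^s(\R^n)$ by the first estimate in \eqref{eq:trace}, so the right-hand side is meaningful for $f\in\dot H^{-s}(\R^n)$. Testing against $\tilde v\in C_c^\infty(\R^{n+1}_+)$ recovers the bulk equation distributionally, whereas allowing $\tilde v$ not to vanish on $\R^n\times\{0\}$ encodes, by definition, the generalized co-normal condition $c(x)\lim_{x_{n+1}\to 0}x_{n+1}^{1-2s}\p_{n+1}\tilde u=f$.

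Next I would verify the hypotheses of Lax--Milgram. The form $B$ is bounded on $\dot H^1(\R^{n+1}_+,x_{n+1}^{1-2s})$ since $\tilde\gamma$ is bounded (recall $0<c_1\le c\le c_2$ and $\gamma\in L^\infty$), and it is coercive because $\tilde\gamma$ is uniformly elliptic: there is $\lambda>0$ with
\begin{align*}
B(\tilde u,\tilde u)\ge\lambda\int_{\R^{n+1}_+}x_{n+1}^{1-2s}|\nabla\tilde u|^2\,dx\,dx_{n+1}=\lambda\|\tilde u\|_{\dot H^1(\R^{n+1}_+,x_{n+1}^{1-2s})}^2,
\end{align*}
the last equality being valid since, as recalled in the preliminaries, $\dot H^1(\R^{n+1}_+,x_{n+1}^{1-2s})$ is a Hilbert space for precisely this scalar product. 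The functional $\tilde v\mapsto\langle f,\tilde v(\cdot,0)\rangle$ is bounded on this space, since by \eqref{eq:trace}
\begin{align*}
|\langle f,\tilde v(\cdot,0)\rangle|\le\|f\|_{\dot H^{-s}(\R^n)}\|\tilde v(\cdot,0)\|_{\dot H^s(\R^n)}\le C\|f\|_{\dot H^{-s}(\R^n)}\|\tilde v\|_{\dot H^1(\R^{n+1}_+,x_{n+1}^{1-2s})}.
\end{align*}
Hence Lax--Milgram produces a unique $\tilde u\in\dot H^1(\R^{n+1}_+,x_{n+1}^{1-2s})$ with $B(\tilde u,\cdot)=\langle f,(\cdot)|_{x_{n+1}=0}\rangle$; uniqueness within this class is immediate, since any two solutions differ by an element of the kernel of $B$, which is trivial by coercivity.

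For the quantitative bounds I would test the weak formulation with $\tilde v=\tilde u$ and combine coercivity with the bound on the functional, obtaining
\begin{align*}
\lambda\|\tilde u\|_{\dot H^1(\R^{n+1}_+,x_{n+1}^{1-2s})}^2\le B(\tilde u,\tilde u)=\langle f,\tilde u(\cdot,0)\rangle\le C\|f\|_{\dot H^{-s}(\R^n)}\|\tilde u\|_{\dot H^1(\R^{n+1}_+,x_{n+1}^{1-2s})},
\end{align*}
so that $\|x_{n+1}^{\frac{1-2s}{2}}\nabla\tilde u\|_{L^2(\R^{n+1}_+)}\le C\|f\|_{\dot H^{-s}(\R^n)}$. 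Inserting this into the two inequalities of \eqref{eq:trace} controls $\|\tilde u(\cdot,0)\|_{\dot H^s(\R^n)}$ and $\|\tilde u(\cdot,0)\|_{L^{2n/(n-2s)}(\R^n)}$ by the same right-hand side; in particular $\tilde u(\cdot,0)\in L^{2n/(n-2s)}(\R^n)$ automatically, and the claimed energy estimate follows by summing the three contributions.

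The variational part above is routine; I expect the only point requiring genuine care to be the upgrade of the weak co-normal identity to the pointwise-limit form written in \eqref{eq:main_one1}. I would handle this exactly as for the Caffarelli--Silvestre extension in \cite{CS07,ST10}: interior regularity for the degenerate operator (Fabes--Kenig--Serapioni-type estimates away from $\{x_{n+1}=0\}$) shows that $x_{n+1}^{1-2s}\p_{n+1}\tilde u$ admits a trace on $\R^n\times\{0\}$ in the appropriate weak sense, and an integration by parts in the bulk against a test function not vanishing on the boundary identifies that trace with $f/c$.
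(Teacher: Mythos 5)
Your proposal is correct and follows essentially the same route as the paper, which proves the lemma precisely by combining the (Sobolev) trace embeddings \eqref{eq:trace} with the Lax--Milgram lemma (referring to \cite{FGKRSU25} for details); you have simply written out the variational formulation, coercivity, and the energy/trace estimates explicitly. The only point the paper leaves implicit, the identification of the weak co-normal condition with the pointwise weighted normal derivative, you also handle in the standard way via \cite{CS07,ST10}, so there is no substantive difference.
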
  

\begin{proof}
This follows from a combination of (Sobolev) trace embeddings and the lemma of Lax-Milgram. We refer to, for instance, \cite{FGKRSU25} for further details.
\end{proof}

We consider the following measurements encoded in the (generalized) Neumann-to-Dirichlet map
\begin{align}
\label{eq:DtN2}
\begin{split}
\tilde{\Lambda}_{\gamma}:  C^{\infty}_{x_0,\diamond}(\R^n ) \rightarrow H^{s}(\R^n),\\
f \mapsto \tilde{\Lambda}_{\gamma}(f):= \tilde{u}(x,0),
\end{split}
\end{align}
where $\tilde{u}$ is a solution of \eqref{eq:main_one1}.
With slight abuse of notation, we have set $C^{\infty}_{x_0,\diamond}(\R^n ):=\{f\in C_c^{\infty}(B_1(x_0)): \ \int\limits_{\R^n} f dx = 0\}$. In what follows, without loss of generality, we will assume that $x_0 =0$.
It is our aim to reconstruct $c^{\frac{1}{s}} \gamma: \R^n \rightarrow \R^{n\times n}$ from the information encoded in the map $\tilde{\Lambda}_{\gamma}$. As above, we formulate this as a PDE version of Theorem \ref{thm:bdry_reconstr_ext_nonloc_source}:

\thmstarstarnum{\ref{thm:bdry_reconstr_ext_nonloc_source}}
\begin{thm**}[Whole space PDE version of Theorem \ref{thm:bdry_reconstr_ext_nonloc_source}]
\label{prop:NtD}
Let $s\in (0,1)$ and let $\gamma \in C^{4}(\R^n, \R^{n\times n}_+)$ and $c\in C^{4}(\R^n)$  be as above. For each point $x_0 \in \R^n $ and $\alpha \in \mathbb{S}^{n-1}$ there exist
\begin{itemize}
\item a sequence $(N_{k,\alpha})_{k \in \N} \subset \R$ with $N_{k,\alpha} \rightarrow \infty$ as $k \rightarrow \infty$,
\item a sequence of solutions $(\tilde{u}_{N_{k,\alpha}})_{k \in \N} \subset \dot{H}^{1}(\R^{n+1}_+, x_{n+1}^{1-2s})$ of the bulk equation in \eqref{eq:main_one1} with an associated sequence of boundary data $(f_{N_{k,\alpha}})_{k \in \N}$ satisfying $\int\limits_{\R^n}f_{N_{k,\alpha}} dx = 0$ 
\end{itemize}
such that
\begin{align*}
\lim\limits_{k \rightarrow \infty} N_{k,\alpha}^{2s + \frac{n}{2}} \langle f_{N_{k,\alpha}}, \tilde{\Lambda}_{\gamma}(f_{N_{k,\alpha}}) \rangle_{H^{-s}(\R^n), H^{s}(\R^n)}   = \hat{c}_s^{-2} (c_1+c_2) (\bar{C}_{\alpha}(\gamma)(x_0))^{-2s},
\end{align*}
where $\bar{C}_{\alpha}(\gamma)(x_0)$ is the constant from \eqref{eq:constant_weighted}, $c_1 = \int\limits_{0}^{\infty} t K_s^2(t) dt$, $c_{2}= \int\limits_{0}^{\infty} t K_{1-s}^2 (t) dt$ and $ \hat{c}_s= 2^{-s}\Gamma(1-s)\neq 0 $.
The functions $f_{N_{k,\alpha}}$ are $C^1$ regular with support in $B_{1/N_{k,\alpha}}(x_0)$ and they are independent of $\gamma$. 
\end{thm**}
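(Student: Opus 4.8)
The plan is to follow the proof of Theorem~\ref{thm:Eucl} in Sections~\ref{sec:DtN_approx_proof}--\ref{sec:error_bounds_anisotropic} almost verbatim, with two modifications: the Dirichlet trace is replaced by the generalized Neumann trace, and the approximate solution from Proposition~\ref{prop:DtN_approx} is renormalized by a $\gamma$-dependent constant of size $\sim N^{-2s}$. Without loss of generality take $x_0=0$, recall $C_0=C_0(\alpha)=\sqrt{\sum_{j,\ell}\gamma_{j\ell}(0)\alpha_j\alpha_\ell}$ from \eqref{eq:C0}, and fix as boundary data the $\gamma$- and $c$-independent functions
\[
f_N(x):=e^{iN\alpha\cdot x}\eta_N(x)-\Big(\int_{\R^n}e^{iN\alpha\cdot y}\eta_N(y)\,dy\Big)\psi_N(x),
\]
where $\eta_N=\eta(\sqrt{N}\,\cdot)$ is as in \eqref{eq:eta}--\eqref{eq:bdydata} and $\psi_N$ is a fixed smooth bump with $\int\psi_N=1$ supported in the same ball. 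Since $\eta\in C_c^\infty$, the factor $\widehat{\eta_N}(-\sqrt N\alpha)$ decays faster than any power of $N$, so the corrective term is negligible at every step while it ensures $\int_{\R^n}f_N\,dx=0$; by the computation preceding Lemma~\ref{lem:well-posed_NtD} this gives $f_N\in\dot H^{-s}(\R^n)$ and hence a well-defined solution $\tilde w_N\in\dot H^1(\R^{n+1}_+,x_{n+1}^{1-2s})$ of \eqref{eq:main_one1} with $\tilde\Lambda_\gamma(f_N)=\tilde w_N(\cdot,0)$. This $\tilde w_N$ will serve as the exact solution $\tilde u_N$ asserted in the statement, and no subsequence is genuinely required (one may take $N_{k,\alpha}=k$).

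The next step is to recall the approximate solution $\tilde u_N$ from Proposition~\ref{prop:DtN_approx} built with the same oscillation $e^{iN\alpha\cdot x}$ and localization $\eta_N$, and to compute its generalized Neumann trace: applying Lemma~\ref{lem:Bessel}(b),(c) (in the form $\lim_{t\to0}t^{1-2s}\tfrac{d}{dt}(t^sK_s(t))=\hat c_s$) to $v_0$, together with the asymptotics of the $w_r^{\ell,\rho}$ in Proposition~\ref{prop:DtN_approx}(a), one obtains
\[
c(x)\lim_{x_{n+1}\to0}x_{n+1}^{1-2s}\partial_{n+1}\tilde u_N(x,x_{n+1})=c(x)\,\hat c_s\,(C_0N)^{2s}e^{iN\alpha\cdot x}\eta_N(x)+E_N(x),
\]
with $E_N$ of sup-norm $O(N^{2s-1/2})$ and supported in the same ball. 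I would then set the renormalized approximate solution $\tilde u_N^{(0)}:=\big(c(0)\,\hat c_s\,(C_0N)^{2s}\big)^{-1}\,\tilde u_N$, whose Neumann trace is $\tfrac{c(x)}{c(0)}e^{iN\alpha\cdot x}\eta_N(x)$ plus a term of $\dot H^{-s}(\R^n)$-norm $O(N^{-1/2})\|f_N\|_{\dot H^{-s}}$, and which, by rescaling Proposition~\ref{prop:DtN_approx}(c), still solves the bulk equation up to an error $o(N^{1-k})$ in the weighted sense; here $k=2$ (i.e.\ $\gamma,c\in C^4$), one order of regularity more than in Theorem~\ref{thm:Eucl}, because the renormalization factor $\sim N^{-2s}$ eats into the margin of the bulk-defect bound. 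By \eqref{eq:approxsolestimate} and the renormalization, $\|\tilde u_N^{(0)}\|_{\dot H^1(\R^{n+1}_+,x_{n+1}^{1-2s})}\sim N^{-s-n/4}$.

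Writing $r_N:=\tilde w_N-\tilde u_N^{(0)}$, this function solves a bulk equation whose right-hand side is the (small) bulk defect of $\tilde u_N^{(0)}$ and whose generalized Neumann datum is $f_N$ minus the Neumann trace of $\tilde u_N^{(0)}$, hence of $\dot H^{-s}$-norm $O(N^{-1/2})\|f_N\|_{\dot H^{-s}}$. Running the argument of Step~4 of the proof of Theorem~\ref{thm:Eucl} --- Green's identity, Hardy's inequality \eqref{eq:Hardy2}, the trace estimates \eqref{eq:trace}, and the energy estimate of Lemma~\ref{lem:well-posed_NtD}, with the extra boundary term now controlled via $\|r_N(\cdot,0)\|_{\dot H^s}\lesssim\|r_N\|_{\dot H^1(x_{n+1}^{1-2s})}$ --- one obtains $\|r_N\|_{\dot H^1(\R^{n+1}_+,x_{n+1}^{1-2s})}=o(N^{-s-n/4})$, i.e.\ negligible compared to $\|\tilde u_N^{(0)}\|_{\dot H^1(x_{n+1}^{1-2s})}$. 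Finally, by the weak formulation of $\tilde\Lambda_\gamma$ (the Neumann analogue of \eqref{eq:DtN2_def}, compare \eqref{eq:int_by_parts}),
\[
\langle f_N,\tilde\Lambda_\gamma(f_N)\rangle_{H^{-s}(\R^n),H^s(\R^n)}=\int_{\R^{n+1}_+}x_{n+1}^{1-2s}\tilde\gamma\,\nabla\tilde w_N\cdot\overline{\nabla\tilde w_N}\,dx\,dx_{n+1},
\]
and expanding $\tilde w_N=\tilde u_N^{(0)}+r_N$, the cross term and the $|\nabla r_N|^2$ term are $O(N^{-s-n/4})\,o(N^{-s-n/4})=o(N^{-2s-n/2})$, while the main term $\int x_{n+1}^{1-2s}\tilde\gamma|\nabla\tilde u_N^{(0)}|^2$ equals $\big(c(0)\hat c_s(C_0N)^{2s}\big)^{-2}$ times the quantity $I$ computed in Step~3 of the proof of Theorem~\ref{thm:Eucl} (cut-off errors being exponentially small), which is asymptotically $\big(c(0)\hat c_s C_0^{2s}\big)^{-2}N^{-4s}\cdot c(0)(c_1+c_2)C_0^{2s}N^{2s-n/2}=\tfrac{c_1+c_2}{c(0)\,\hat c_s^{2}\,C_0^{2s}}\,N^{-2s-n/2}$. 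Multiplying by $N^{2s+n/2}$ and letting $N\to\infty$ therefore yields
\[
\lim_{N\to\infty}N^{2s+n/2}\langle f_N,\tilde\Lambda_\gamma(f_N)\rangle=\frac{c_1+c_2}{c(0)\,\hat c_s^{2}\,C_0^{2s}}=\hat c_s^{-2}(c_1+c_2)\big(c(x_0)^{1/(2s)}C_0\big)^{-2s}=\hat c_s^{-2}(c_1+c_2)\big(\bar C_\alpha(\gamma)(x_0)\big)^{-2s},
\]
which is the claim; the $f_N$ are $C^1$ (indeed $C^\infty$ here) with support in a ball shrinking to $x_0$ and independent of $\gamma$ and $c$ by construction. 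The main obstacle relative to the Dirichlet case is precisely this error analysis for $r_N$: the approximate solution no longer matches the boundary datum exactly, so a bulk defect and a Neumann-datum defect must be controlled simultaneously in $\dot H^{-s}$, and the interaction of these with the $N^{-2s}$ renormalization is what forces the additional regularity assumption.
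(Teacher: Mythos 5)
Your overall strategy (renormalize the oscillatory approximate solution by $\sim N^{-2s}$, extract the main term from the Step~3 computation of Theorem~\ref{thm:Eucl}, push everything else into a remainder) is close in spirit to the paper, and two of your deviations are legitimate alternatives: enforcing $\int f_N=0$ by subtracting an $O(N^{-\infty})$ multiple of a fixed bump (instead of the paper's special cut-off and the sequence $N_{k,\alpha}$ with $\hat\eta(\sqrt{N_{k,\alpha}}\,\alpha)=0$ from Lemma~\ref{lem:cut-off}), and reusing the Dirichlet-case approximate solution with an $O(N^{-1/2})$-relative Neumann mismatch instead of the paper's corrected ansatz in Proposition~\ref{prop:expansion_NtD}, whose generalized Neumann trace matches the datum \emph{exactly} so that $r_N$ has vanishing Neumann data. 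However, there is a concrete gap at the heart of your error analysis: you propose to bound $\|r_N\|_{\dot H^1(\R^{n+1}_+,x_{n+1}^{1-2s})}$ by ``running Step~4 of the proof of Theorem~\ref{thm:Eucl}'', in particular invoking Hardy's inequality \eqref{eq:Hardy2} to pair the bulk defect with $r_N$. In the Dirichlet setting this works because $r_N$ has zero trace on $\{x_{n+1}=0\}$; in the Neumann setting $r_N(\cdot,0)\neq 0$, so \eqref{eq:Hardy2} is not applicable to $r_N$ — indeed $\int r_N^2\,x_{n+1}^{-1-2s}$ is generically infinite when the trace does not vanish, since $x_{n+1}^{-1-2s}$ is non-integrable at the boundary. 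This is precisely the point where the paper departs from the Dirichlet argument: it never pairs the defect with $r_N$ via Hardy, but instead reorganizes the integration by parts (the terms $\widetilde{II}$, $\widetilde{III}$) so that the bulk defect is paired with $\zeta_N\tilde u_N$ (controlled pointwise) and with $\tilde w_N$, the latter handled through the slab Poincar\'e-type estimate \eqref{eq:weightedL2} and the energy bound in terms of $\|f_N\|_{\dot H^{-s}}$. Your claimed bound $\|r_N\|_{\dot H^1}=o(N^{-s-n/4})$ is plausibly true, but the route you cite breaks down; you would need to replace Hardy by an estimate of the type \eqref{eq:weightedL2} on the slab $\{0\le x_{n+1}\le 1/\sqrt N\}$ (or some equivalent device) when absorbing the bulk defect.

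A secondary point you gloss over, which matters for $s\ge 1/2$: the smallness of the Neumann-datum mismatch must be measured in $\dot H^{-s}$ using the oscillatory structure (frequency localization at $|\xi|\sim N$ of $e^{iN\alpha\cdot x}$ times a profile in $\sqrt N x$), since the sup-norm/$L^2$ bounds you quote for $E_N$ and for $(1-c(x)/c(0))e^{iN\alpha\cdot x}\eta_N$ only give $O(N^{-1/2-n/4})$, which is \emph{not} $o(N^{-s-n/4})$ once $s\ge 1/2$; with the frequency localization one gains the extra factor $N^{-s}$ and the claim $O(N^{-1/2})\|f_N\|_{\dot H^{-s}}$ (with the sharp $\|f_N\|_{\dot H^{-s}}\sim N^{-s-n/4}$, not the paper's cruder $N^{1/2-n/4}$ bound) becomes correct. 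With these two repairs — a slab weighted-$L^2$ estimate in place of Hardy, and a genuine $\dot H^{-s}$ estimate of the mismatch exploiting the oscillation — your scheme does yield the stated limit with the constant $\hat c_s^{-2}(c_1+c_2)(\bar C_\alpha(\gamma)(x_0))^{-2s}$, and even for every $N$ rather than along a subsequence, which is slightly stronger than what the paper proves.
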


\subsection{Proof of Theorem \ref{prop:NtD}}

The proof of Theorem \ref{prop:NtD} is largely parallel to the proof of Theorem \ref{thm:Eucl}. As a consequence, we only comment on the main changes.

\subsubsection{Construction of the approximate solution}

As above, we assume that $ \gamma, c \in C^{2k} $ for some $ k\geq 2 $, and for the approximate solution we make the same ansatz as above and seek to construct a solution of the form
\begin{align*}
\tilde{\bar{u}}_N(x, x_{n+1}) = e^{i N \alpha \cdot x} \sum\limits_{j=0}^{2k} N^{-\frac{j}{2}} \tilde{v}_j(x, x_{n+1}),
\end{align*}
where the functions $\tilde{v}_j$ are again compactly supported and now satisfy suitable weighted Neumann data.
In order to achieve this, we again carry out the formal expansion of the bulk equation and the changes of variables outlined above. This leads to the same bulk recursion formula as in \eqref{eq:HierarchyODE}. The bulk equations are now, however, complemented with appropriate Neumann instead of Dirichlet data. This leads to a slightly modified version of the expansion, in which the approximate solution $ \tilde{u}_N $ with the desired Neumann boundary data will be obtained by multiplying with a suitable normalization factor (see part (b) of the following proposition). As above, we shall fix $ x_0=0 $ and for abbreviation, in the following proposition we use the notation 
\begin{align*}
C_0:= \sqrt{\sum\limits_{j,k=1}^n \gamma_{jk}(0)\alpha_k \alpha_j}.
\end{align*}

\begin{prop}
\label{prop:expansion_NtD}
Let $\alpha\in \mathbb{S}^n$, let $N\in \N$ and let $\gamma \in C^{2k}(\R^n, \R^{n\times n})$, $c \in C^{2k}(\R^n)$ for some $k\geq 1$. Let $\eta$ be as in \eqref{eq:eta}.
Then there exists a function of the form
\begin{align}
\label{eq:approx_NtD}
\tilde{\bar{u}}_N(x,x_{n+1})
= e^{i N \alpha \cdot x} \sum\limits_{r=0}^{2k} N^{-\frac{r}{2}} \tilde{v}_r(x,x_{n+1}) 
\end{align}
with the following properties:
\begin{itemize}
\item[(a)] The functions $\tilde{v}_r$ have the following form
\begin{align*}
\tilde{v}_0(x,x_{n+1}) &= \eta(z) (C_0 z_{n+1})^s K_s(C_0 z_{n+1}),\\
\tilde{v}_r(x,x_{n+1}) &= \sum\limits_{\ell=1}^{r}\sum_{\rho=1}^{\binom{r-1}{\ell-1}} P_r^{\ell,\rho}(z) (C_0 z_{n+1})^s w_r^{\ell,\rho}(C_0 z_{n+1})\\
&\qquad- \text{Err}_r(z) \hat{c}_s^{-1} C_0^{-2s}(C_0 z_{n+1})^s K_s(C_0 z_{n+1})\text{ for } r\in \{1,\dots,2k\},
\end{align*}
where $\mbox{Err}_r(z), P_{r}^{\ell,\rho}$ are smooth functions with compact support in $\{z\in \R^n: |z|\leq 1\}$ and for $r\in\{1,\dots,2k\}$ and $ \rho\in\{1,\ldots,\binom{r-1}{\ell-1} \} $ it holds that
\begin{align*}
	w_r^{\ell,\rho}(t) &= \frac{o(1)}{t^{s}} \mbox{ as } t \rightarrow 0,\\
	w_r^{\ell,\rho}(t) &= O(e^{-t} t^{\frac{1}{2}+(\ell-1)}) \mbox{ as } t \rightarrow \infty,\\
	\lim\limits_{t \rightarrow 0} t^{1-2s} \frac{d}{dt}\left( (At)^s w_r^{\ell,\rho}(At) \right) &= A^{2s} O(1) ,\\
	\lim_{t\to\infty}e^{At}t^{-s-\frac{1}{2}-(\ell-1)}A^{-s-\frac{1}{2}-\ell}\frac{d}{dt}\left((At)^sw_r^{\ell,\rho}(At) \right)&=O(1),
\end{align*} 
for any constant $A>0$. In the last two limits, the implicit constant is independent of $A$.
\item[(b)] It holds that
\begin{align*}
c(x)\lim\limits_{x_{n+1} \rightarrow 0} x_{n+1}^{1-2s} \p_{n+1} \tilde{\bar{u}}_N(x)
=  \hat{c}_s N^{2s} C_0^{2s}  e^{i N \alpha \cdot x} \eta_N(x) =: \hat{c}_s N^{2s} C_0^{2s}f_N(x),
\end{align*}
where $\hat{c}_s= 2^{-s}\Gamma(1-s)\neq 0$.
\item[(c)] The function $\tilde{\bar{u}}_N$ is an approximate solution in the sense that
\begin{align*}
	\nabla \cdot x_{n+1}^{1-2s} \tilde{\gamma} \nabla \tilde{\bar{u}}_N =  o(N^{1-k+2s}) e^{i N \alpha \cdot x} z_{n+1}^{1-2s} \mbox{ as } N \rightarrow \infty.
\end{align*}

\end{itemize}
\end{prop}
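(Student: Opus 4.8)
The plan is to run the construction of Proposition~\ref{prop:DtN_approx} essentially verbatim for the bulk equation and to change only the way the boundary condition is enforced: instead of forcing $\tilde v_r|_{\{z_{n+1}=0\}}=0$ for $r\ge1$ (the right condition for Dirichlet data), we allow a controlled nonzero Dirichlet trace and tune a corrector so as to \emph{annihilate} the generalized Neumann trace of each $\tilde v_r$, $r\ge1$. Throughout we take $x_0=0$ and assume $\gamma,c\in C^{2k}$.

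\emph{Bulk expansion.} The Taylor expansion of $\gamma,c$ in the rescaled variables $z=\sqrt N x$, $z_{n+1}=Nx_{n+1}$ and the resulting hierarchy of operators $L_2,L_{3/2},L_1,\dots$ are exactly as in Step~1 of the proof of Proposition~\ref{prop:DtN_approx}; here $L_2=N^2(-C_0^2+(1-2s)z_{n+1}^{-1}\partial_{z_{n+1}}+\partial^2_{z_{n+1}})$ is the ODE operator annihilating $v_0=\eta(z)(C_0z_{n+1})^sK_s(C_0z_{n+1})$. Thus $\tilde{\bar u}_N$ will be an approximate solution with the error claimed in (c) as soon as the $\tilde v_r$ satisfy the same chain of ODEs $L_2N^{-r/2}\tilde v_r+L_{3/2}N^{-(r-1)/2}\tilde v_{r-1}+\dots=0$ as in Proposition~\ref{prop:DtN_approx} and \eqref{eq:HierarchyODE}. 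We solve this inductively from $\tilde v_0=v_0$, applying Lemma~\ref{lem:normal}(a) to $L_2v_0=0$ and Lemma~\ref{lem:normal}(b) to the inhomogeneous steps as before; when grouping the right-hand sides by asymptotic behaviour we additionally collect all $K_s$-type contributions (including those produced by applying the lower order operators to the $\mathrm{Err}_j$-terms of earlier steps) into a single source $\propto K_s$, which is admissible in Lemma~\ref{lem:normal}(b) with $a=-\tfrac12$ since $K_s(t)=O(t^{-1/2}e^{-t})$.

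\emph{Correctors.} Given the candidate $\hat v_{r+1}=\sum_{\ell,\rho}P_{r+1}^{\ell,\rho}(z)(C_0 z_{n+1})^s w_{r+1}^{\ell,\rho}(C_0 z_{n+1})$ produced by Lemma~\ref{lem:normal}, each summand vanishes as $z_{n+1}\to0$ but has nonzero generalized Neumann trace: by the rescaled asymptotics of Lemma~\ref{lem:normal}(b),
\[
\lim_{x_{n+1}\to0}x_{n+1}^{1-2s}\partial_{n+1}\big((C_0 z_{n+1})^s w_{r+1}^{\ell,\rho}(C_0 z_{n+1})\big)=(C_0 N)^{2s}\kappa_{r+1}^{\ell,\rho},
\]
with $\kappa_{r+1}^{\ell,\rho}:=\lim_{t\to0}t^{1-2s}\tfrac{d}{dt}(t^s w_{r+1}^{\ell,\rho}(t))$ a finite, $N$-independent constant. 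Using \eqref{eq:weighted_der} and Lemma~\ref{lem:Bessel}(c) in the rescaled form $\lim_{x_{n+1}\to0}x_{n+1}^{1-2s}\partial_{n+1}\big((C_0 z_{n+1})^sK_s(C_0 z_{n+1})\big)=\hat c_s(C_0 N)^{2s}$, we set $\mathrm{Err}_{r+1}(z):=C_0^{2s}\sum_{\ell,\rho}\kappa_{r+1}^{\ell,\rho}P_{r+1}^{\ell,\rho}(z)$ and
\[
\tilde v_{r+1}:=\hat v_{r+1}-\mathrm{Err}_{r+1}(z)\,\hat c_s^{-1}C_0^{-2s}(C_0 z_{n+1})^sK_s(C_0 z_{n+1}).
\]
By construction the generalized Neumann trace of $\tilde v_{r+1}$ vanishes, $\tilde v_{r+1}$ has the form claimed in (a), and $\mathrm{Err}_{r+1}$ is compactly supported in $\{|z|\le1\}$ since the $P_{r+1}^{\ell,\rho}$ are. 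Since $(C_0 z_{n+1})^sK_s(C_0 z_{n+1})$ is, like $v_0$ up to the cutoff, a homogeneous solution, $L_2$ kills the corrector term, so adding it does not disturb the recursion already solved by $\hat v_{r+1}$; it only feeds extra $K_s$-type source terms into the later equations, which are handled exactly as above. This closes the induction.

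\emph{Conclusion and main difficulty.} Parts (a) and (c) then follow as in Proposition~\ref{prop:DtN_approx}: (a) from Lemma~\ref{lem:normal} and the compact supports, (c) because the hierarchy is satisfied to the prescribed order, giving error $o(N^{1-k+2s})e^{iN\alpha\cdot x}z_{n+1}^{1-2s}$. For (b), only $\tilde v_0$ contributes to the generalized Neumann trace (all higher $\tilde v_r$ having vanishing trace by construction), and the rescaled Bessel computation yields $c(x)\lim_{x_{n+1}\to0}x_{n+1}^{1-2s}\partial_{n+1}\tilde{\bar u}_N=\hat c_s N^{2s}C_0^{2s}e^{iN\alpha\cdot x}\eta_N(x)$. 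I expect the main obstacle to be the simultaneous control of the iteratively added correctors: one must verify at once that each corrector (i) cancels the generalized Neumann trace of $\hat v_{r+1}$ to leading order, which rests on the precise matching Bessel asymptotics for $K_s$ and the $w_r^{\ell,\rho}$; (ii) does not spoil the bulk recursion, which uses that it is a homogeneous solution so $L_2$ annihilates it; and (iii) only generates downstream source terms staying within the class covered by Lemma~\ref{lem:normal}(b), so that the asymptotic profiles in (a) and the error bound in (c) survive. Pinning down the normalization constant $\hat c_s$ in (b) is the remaining bookkeeping point.
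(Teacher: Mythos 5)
Your proposal is correct and follows the paper's own proof essentially verbatim: the same hierarchy of ODEs is solved via Lemma~\ref{lem:normal}, and the generalized Neumann trace of each intermediate $\bar v_{r}$ is cancelled by subtracting a multiple of the homogeneous profile $(C_0 z_{n+1})^s K_s(C_0 z_{n+1})$, with exactly the paper's key observations that this corrector is annihilated by $L_2$ and only feeds additional $K_s$-type sources (the admissible case $a=-\tfrac12$ in Lemma~\ref{lem:normal}(b)) into the later steps, so the asymptotics in (a) and the error in (c) are unaffected. The only deviation is a normalization detail in part (b): the paper absorbs the weight by working with $\tilde v_0 = v_0/c(x)$ and including the factor $c$ in $\mathrm{Err}_r$, whereas your trace computation as written carries a stray factor $c(x)$; this is bookkeeping rather than a gap in the argument.
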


\begin{proof}
We argue similarly as in the Dirichlet-to-Neumann setting, adopting the proof from Proposition \ref{prop:DtN_approx} suitably. 

Indeed, by Proposition \ref{prop:DtN_approx}, we have that the function $v_0$ from Proposition \ref{prop:DtN_approx} is a solution to the equation $L_2 v=0$ from \eqref{eq:HierarchyODE}. Moreover, defining $\tilde{v}_0(x,x_{n+1}):= \frac{1}{c(x)} v_0(x,x_{n+1})$, by the derivative relations and asymptotic behaviour of the modified Bessel functions (see Lemma \ref{lem:Bessel}(b), (c)) it holds that
\begin{align*}
c(x)\lim\limits_{x_{n+1} \rightarrow 0} x_{n+1}^{1-2s} \p_{x_{n+1}} \tilde{v}_0(x,x_{n+1})
&= N^{2s}\lim\limits_{z_{n+1} \rightarrow 0} z_{n+1}^{1-2s} \p_{z_{n+1}} v_0(z,z_{n+1})\\
&= \hat{c}_{s} C_0^{2s} N^{2s} \eta_N(x) ,
\end{align*} 
as claimed above.

In order to construct the functions $\tilde{v}_{r}$ with $r \in \{1,\dots, 2k\}$, we also follow the proof from above, but ``correct'' the boundary conditions such that the functions $\tilde{v}_{r}$ satisfy
\begin{align*}
c(x)\lim\limits_{x_{n+1} \rightarrow 0} x_{n+1}^{1-2s} \p_{x_{n+1}} \tilde{v}_{r}(x,x_{n+1}) = 0
\mbox{ for all } r \in \{1,\dots, 2k\}.
\end{align*} 
As above, we argue inductively. In the first step, the function $\tilde{v}_1$ is constructed by first considering the auxiliary function
\begin{align*}
	\bar{v}_1(z, z_{n+1}) = \frac{L_{3/2}(\eta(z))}{N^{3/2}C_0^2} (C_0 z_{n+1})^{s} w_1^{1,1}(C_0 z_{n+1})
	=: \bar{P}_{1}^{1,1}(z) (C_0 z_{n+1})^{s} w_1^{1,1}(C_0 z_{n+1}),
\end{align*}
 where $w_1^{1,1}$ is a solution of
\begin{align*}
t^2 w''(t) + t w'(t) - (s^2+t^2)w(t) = t^2 K_s(t).
\end{align*}
We recall that, by construction, $\bar{P}^{1,1}_1$ is independent of $N$.
Now as discussed above (see Step 2 in the proof of Proposition \ref{prop:DtN_approx}), the function $\bar{v}_1$ is a solution of the inhomogeneous ODE $ L_2N^{-1/2}v=-L_{3/2}\tilde{v}_0 $ satisfying vanishing Dirichlet, but not vanishing Neumann conditions. Indeed, we have that (by Proposition \ref{prop:DtN_approx}(a))
\begin{align*}
	c(x)\lim\limits_{z_{n+1} \rightarrow 0} z_{n+1}^{1-2s} \p_{n+1} \bar{v}_1(z, z_{n+1})
	= c(x)c_{s}C_0^{2s} \bar{P}_{1}^{1,1}(z)=:\mbox{Err}_1(z) 
\end{align*}
for some constant $ c_{s}>0 $ given by the asymptotic behaviour in Lemma \ref{lem:normal}(b).  We, hence, ``correct'' the function $\bar{v}_1$, by subtracting a multiple of the homogeneous solution which corrects the Neumann data. More precisely, we set
\begin{align*}
	\tilde{v}_1(z, z_{n+1}) &= \bar{v}_1(z,z_{n+1}) -  \mbox{Err}_1(z) \hat{c}_s^{-1} C_0^{-2s}(C_0 z_{n+1})^s K_s(C_0 z_{n+1}),
\end{align*}
where the constant $\hat{c}_s \neq 0$ is given in part (b); it is obtained as an asymptotic constant for Bessel functions, see Lemma \ref{lem:Bessel} (b).
We see that $ \tilde{v}_1 $ now satisfies vanishing Neumann condition since by Lemma \ref{lem:normal}(b), 
\[z_{n+1}^{1-2s}\p_{z_{n+1}}\left((C_0z_{n+1})^sK_s(C_0z_{n+1}) \right)\sim \hat{c}_sC_0^{2s}\text{ as }z_{n+1}\to 0. \]
Next, to solve for $\tilde{v}_2 $ (and, analogously, also in the higher order iteration below), the key observation is that in comparison to the approximate solutions obtained in Proposition \ref{prop:DtN_approx},  the new ``correction term" only changes the definition of the function $ \bar{P}_2^{1,1}(z) $ but not the profiles of the functions in the $z_{n+1}$ variable in the definition of $ \bar{v}_2 $. This follows as it is a solution of the homogeneous ODE. More precisely, $ \bar{v}_2 $ is a solution of $ L_2N^{-1}v+L_{3/2}N^{-\frac{1}{2}}\tilde{v}_1+L_1\tilde{v}_0=0 $. Hence, by Lemma \ref{lem:normal}, $ \bar{v}_2 $ is given by 
\[\begin{split}
\bar{v}_2(z,z_{n+1})&=\left(\frac{L_1\eta(z)}{NC_0^2}-\frac{L_{3/2}\hat{c}_s^{-1}C_0^{-2s}\mbox{Err}_1}{N^{3/2}}\right)(C_0z_{n+1})^sw_2^{1,1}(C_0z_{n+1})\\
&\quad\qquad+\frac{L_{3/2}P_1^1}{N^{3/2}}(C_0z_{n+1})^sw_{2}^{2,1}(C_0z_{n+1})\\
&=:\bar{P}_2^{1,1}(z) (C_0z_{n+1})^sw_2^{1,1}(C_0z_{n+1}) + \bar{P}_{2}^{2,1}(C_0z_{n+1})^sw_2^{2,1}(C_0z_{n+1}),
\end{split} \] 
where 
\[\begin{split}
	t^2(w_2^{1,1})''(t)+t(w_2^{1,1})'-(s^2+t^2)w_2^{1,1}&=t^2K_s(t),\\
	t^2(w_2^{2,1})''(t)+t(w_2^{2,1})'-(s^2+t^2)w_2^{2,1}&=t^2w_1^{1,1}(t). 
\end{split} \]
As above, by construction, the functions $\bar{P}_2^{1,1}(z)$, $\bar{P}_{2}^{2,1}(z)$ are independent of $N$.
The new ``error term'' is given by  
\[
c(x)\lim_{z_{n+1}\to 0}z_{n+1}^{1-2s}\p_{n+1}\bar{v}_2(z,z_{n+1})=c(x)\bar{P}_2^{1,1}(z)c_2^{1,1}C_0^{2s}+c(x)\bar{P}_2^{2,1}(z)c_2^{2,1} C_0^{2s}=:\mbox{Err}_2(z), \]
where $ c_2^{1,1}, c_2^{2,1} $ are asymptotic constants given by Lemma \ref{lem:normal}. Then, similarly as for $ v_1 $, setting 
\[\tilde{v}_2(z,z_{n+1}):=\bar{v}_2(z,z_{n+1}) -  \mbox{Err}_2(z) \hat{c}_s^{-1} C_0^{-2s}(C_0 z_{n+1})^s K_s(C_0 z_{n+1})\]
suffices for $ \tilde{v}_2 $ to ensure the vanishing Neumann boundary condition.   

We proceed with the general induction step. Assuming now that we obtained the functions $ \{\tilde{v}_0,\ldots,\tilde{v}_r\} $ for some $ r\in\{1,\ldots,2k-1\} $, the same proof as in the induction step in the proof of Proposition \ref{prop:DtN_approx} gives 
\[\bar{v}_{r+1}(z,z_{n+1})=\sum\limits_{\ell=1}^{r+1}\sum_{\rho=1}^{\binom{r}{\ell-1}} \bar{P}_{r+1}^{\ell,\rho}(z) (C_0 z_{n+1})^s w_{r+1}^{\ell,\rho}(C_0 z_{n+1}), \] 
for suitable functions $\bar{P}_{r+1}^{\ell,\rho}(z) $, which are independent of $N$.
As above, it remains to ``correct'' the Neumann boundary conditions. To this end, we set 
\[\mbox{Err}_{r+1}(z):= c(x)\lim_{z_{n+1}\to 0}z_{n+1}^{1-2s}\p_{n+1}\bar{v}_{r+1}(z,z_{n+1}), \]
and define 
\[\tilde{v}_{r+1}(z,z_{n+1}):= \bar{v}_{r+1}(z,z_{n+1}) -  \mbox{Err}_{r+1}(z) \hat{c}_s^{-1} C_0^{-2s}(C_0 z_{n+1})^s K_s(C_0 z_{n+1}) \]
In particular, the functions $ w_r^{\ell,\rho}(t) $, which are obtained from Lemma \ref{lem:normal}, have exactly the same asymptotic behaviour at the boundary (that is as $ z_{n+1}\to 0 $ or as $ z_{n+1}\to\infty $) as  those in Proposition \ref{prop:DtN_approx}. This allows us to iterate Lemma \ref{lem:normal} to obtain the functions $ \tilde{v}_r $ with vanishing Neumann conditions with the same asymptotic behaviour as in Proposition \ref{prop:DtN_approx}. 
\end{proof}

While having constructed approximate solutions in parallel to the Dirichlet data setting, in order to obtain an exact solution close to this, we still need to adapt the boundary condition suitably in order to ensure the vanishing mean value of the boundary data. As above, we consider $\alpha \in \mathbb{S}^{n-1}$ arbitrary but fixed, and seek to consider Neumann boundary conditions of the form
\begin{align*}
f_N(x):= \eta(\sqrt{N}x) e^{i N \alpha \cdot x }.
\end{align*}
In contrast to before, we now, however, have to ensure a vanishing mean value of these data. To this end, we will specify the cut-off function $\eta: \R^n \rightarrow \R$ and will consider a sequence $(N_{k,\alpha})_{k \in \N} \subset \R$ such that $N_{k,\alpha} \rightarrow \infty$ as $k \rightarrow \infty$.

\begin{lem}
\label{lem:cut-off}
There exists a cut-off function $\eta \in C^{\infty}(\R^n)$ such that for all $\alpha \in \mathbb{S}^{n-1}$ the following property holds: There is a sequence $(N_{k,\alpha})_{k \in \N} \subset \R$ such that $N_{k,\alpha} \rightarrow \infty$ as $k \rightarrow \infty$ and such that
\begin{align*}
\int\limits_{\R^n} f_{N_{k,\alpha}}(x) dx:= \int\limits_{\R^n} \eta(\sqrt{N_{k,\alpha}}x) e^{i N_{k,\alpha} \alpha \cdot x } dx = 0.
\end{align*}
\end{lem}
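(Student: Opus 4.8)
The plan is to reduce the statement to an elementary one-dimensional fact about Fourier transforms of smooth, compactly supported functions. Performing the change of variables $y=\sqrt N x$ (so that $dx=N^{-n/2}dy$ and $N\alpha\cdot x=\sqrt N\,\alpha\cdot y$), one obtains
\begin{align*}
\int_{\R^n}\eta(\sqrt N x)\,e^{iN\alpha\cdot x}\,dx=N^{-\frac n2}\int_{\R^n}\eta(y)\,e^{i\sqrt N\,\alpha\cdot y}\,dy=N^{-\frac n2}\,\hat\eta(-\sqrt N\,\alpha).
\end{align*}
Hence it suffices to produce one fixed cut-off $\eta$, admissible in the sense of \eqref{eq:eta}, whose Fourier transform vanishes at a sequence of points on the ray $\R_+\alpha$ escaping to infinity, and this for every $\alpha\in\mathbb{S}^{n-1}$.

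First I would look for $\eta$ with a tensor-product structure, $\eta(x)=c_0\prod_{j=1}^n g(x_j)$, where $c_0>0$ is a normalising constant and $g\in C_c^\infty(\R)$ is a fixed nonnegative even bump, so that $\eta$ is even and $\hat\eta(\xi)=c_0\prod_{j=1}^n\hat g(\xi_j)$. Then $\hat\eta$ vanishes at $\xi$ as soon as $\hat g(\xi_j)=0$ for a single coordinate $j$, and the whole problem collapses to finding $g$ whose Fourier transform has infinitely many real zeros. This is the only real content, and it is elementary: take $g:=\psi*\psi$ with $\psi:=\chi_{[-a,a]}*\phi$ for a standard even mollifier $\phi$ supported in $[-\varepsilon,\varepsilon]$. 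Then $\psi$, and hence $g$, is smooth, nonnegative and even, $\supp g\subset[-2(a+\varepsilon),2(a+\varepsilon)]$, and
\begin{align*}
\hat g(\xi)=\hat\psi(\xi)^2=\Big(\tfrac{2\sin(a\xi)}{\xi}\Big)^2\hat\phi(\xi)^2,
\end{align*}
which vanishes along $\{\xi_k:=\pi k/a:\ k\in\N\}$. Choosing $a,\varepsilon>0$ with $2\sqrt n\,(a+\varepsilon)<1$ forces $\supp\eta\subset\{|x|<1\}$, and after normalising so that $\int_{\R^n}\eta^2=1$ the function $\eta$ satisfies all requirements of \eqref{eq:eta}.

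Finally, for a given $\alpha\in\mathbb{S}^{n-1}$ I would pick an index $j_0$ with $\alpha_{j_0}\neq0$ and set $N_{k,\alpha}:=\big(\pi k/(a|\alpha_{j_0}|)\big)^2$, so that $N_{k,\alpha}\to\infty$ and $\sqrt{N_{k,\alpha}}\,|\alpha_{j_0}|=\xi_k$. Using that $\hat g$ is even, the $j_0$-th factor then satisfies $\hat g(\sqrt{N_{k,\alpha}}\,\alpha_{j_0})=\hat g(\pm\xi_k)=0$, hence $\hat\eta(-\sqrt{N_{k,\alpha}}\,\alpha)=0$ and the displayed integral vanishes, which is the claim. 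The remaining verifications — that $\psi(x)=\int_{-a}^a\phi(x-t)\,dt$ is genuinely $C^\infty$ by differentiation under the integral, that the support and positivity conditions in \eqref{eq:eta} hold, and that the Fourier transform tensorises — are routine. The only point demanding a little care, and the closest thing to an obstacle, is the bookkeeping of the small parameters $a,\varepsilon$ so that $\eta$ lives in the unit ball while $g$ still has room to be a nonzero smooth bump with oscillating Fourier transform; this is immediate by taking $a,\varepsilon$ small enough. (Alternatively, a radial choice of $\eta$ works and even produces a single sequence $N_k$ valid for all $\alpha$, but the tensor construction keeps everything one-dimensional.)
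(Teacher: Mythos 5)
Your proof is correct and takes essentially the same route as the paper: both reduce the vanishing-mean condition to $\hat\eta(\sqrt{N}\,\alpha)=0$ for a mollified indicator-type cut-off whose Fourier transform is a product of sinc-type factors, and then choose $N_{k,\alpha}$ so that the coordinate with $\alpha_{j_0}\neq 0$ lands on a zero of the sine. Your tensor-product construction with the explicit sequence $N_{k,\alpha}=\bigl(\pi k/(a|\alpha_{j_0}|)\bigr)^2$ and the support/normalisation bookkeeping is merely a slightly more explicit variant of the paper's choice $\eta=\rho\ast\chi_{[-1/2,1/2]^n}$.
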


\begin{proof}
We consider $\tilde{\eta} = \chi_{[-1/2,1/2]^n}$ the characteristic function of the $n$-dimensional cube of side length one centred at zero. For $\rho$ a standard mollifier we then define
$\eta := \rho \ast \tilde{\eta}$ and note that 
\begin{align}
\label{eq:FT}
\hat{\eta}(\alpha) = \hat{\rho}(\alpha) \hat{\chi}_{[-1/2,1/2]^n}(\alpha)
= \hat{\rho}(\alpha) \prod\limits_{j:\ \alpha_j \neq 0} \frac{\sin(\alpha_j )}{\alpha_j}.
\end{align}
Here $\hat{u}(\xi):= \int\limits_{\R^n} u(x) e^{-i x\cdot \xi} dx$ denotes the Fourier transform for $u \in L^1(\R^n)$.
Now, after rescaling, we have that the vanishing of the mean value of $f_N$ is equivalent to 
\begin{align*}
\hat{\eta}(\alpha \sqrt{N})=0.
\end{align*}
It hence, remains to argue that for each fixed $\alpha \in \mathbb{S}^{n-1}$ there exists a sequence $(N_{k,\alpha})_{k \in \N} \subset \R$ such that $N_{k,\alpha} \rightarrow \infty$ as $k \rightarrow \infty$ with the property that
\begin{align*}
\hat{\eta}(\alpha \sqrt{N_{k,\alpha}})=0.
\end{align*}
With the explicit Fourier transform \eqref{eq:FT} in hand, since $\alpha \neq 0$, it follows that there exists such a sequence $(N_{k,\alpha})_{k \in \N} \subset \R$. 
\end{proof}

\begin{rmk}
The sequence $(N_{k,\alpha})_{k \in \N} \subset \R$ only depends on known information (i.e., the fixed functions $\rho$ and $\chi_{[-1/2,1/2]^n}$) and can hence be considered as \emph{known} in the measurement process.
\end{rmk}

With the previous observations on approximate solutions (Proposition \ref{prop:expansion_NtD}) and on the Neumann data (Lemma \ref{lem:cut-off}) in hand, for any $\alpha \in \mathbb{S}^{n-1}$ we define a normalized version of the function $\tilde{\bar{u}}_{N_{k,\alpha}}(x,x_{n+1})$ with data $f_{N_{k,\alpha}}$ as in Lemma \ref{lem:cut-off}:
\begin{align}
\label{eq:NtD_approx_sol}
\tilde{u}_{N_{k,\alpha}}(x,x_{n+1}):= \hat{c}_s^{-1} N_{k,\alpha}^{-2s} C_0^{-2s} \tilde{\bar{u}}_{N_{k,\alpha}}(x,x_{n+1}),
\end{align}
as well as the corresponding remainder from being an exact solution. More precisely, let $\tilde{w}_{N_{k,\alpha}}$ be a solution of
\eqref{eq:main_one1} with generalized Neumann data
\[f_{N_{k,\alpha}}(x) =c(x)\lim_{x_{n+1\to 0}}x_{n+1}^{1-2s}\p_{n+1}\tilde{u}_{N_{k,\alpha}}(x,x_{n+1})  \in C_{0,\diamond}^{\infty}(B_1(0)) \text{ for } N_{k,\alpha}>1. \]

Then we define the remainder of $\tilde{u}_{N_{k,\alpha}}$ from being a real solution as
\begin{align}
\label{eq:NtD_remainder}
r_{N_{k,\alpha}}(x,x_{n+1}):= \tilde{w}_{N_{k,\alpha}}(x,x_{n+1}) - \tilde{u}_{N_{k,\alpha}}(x,x_{n+1})
\end{align}

 As above, we also consider a cut-off function $\zeta \in C_c^{\infty}([0,\infty))$ with $\zeta(t) = 1$ for $t\in [0,1/2]$ and $\zeta(t) = 0$ for $t \geq 1$. We set $\zeta_{N_{k,\alpha}}(t) :=\zeta(\sqrt{N_{k,\alpha}} t) $. With slight abuse of notation, for abbreviation, in what follows below, we will drop the indeces in $N_{k,\alpha}$ and simply write $N$.

Since, compared to the leading order term $ v_0 $ in the Dirichlet case, in the approximate solutions, the leading order term $\tilde{v}_0$ in the function $\tilde{u}_{N_{k,\alpha}}$ is only rescaled, the argument from the Dirichlet case immediately implies that 
\begin{align*}
\nabla \tilde{u}_N(x,x_{n+1}) = (\hat{c}_s C_0^{2s} N^{2s})^{-1} e^{i N \alpha \cdot x} \begin{pmatrix} \alpha i N \eta_N(x) (\tilde{C}_0 N x_{n+1})^{s}K_s(\tilde{C}_0 N x_{n+1}) + R_N^1(x,x_{n+1})\\
-\eta_N(x) (C_0 N)^{s+1} x_{n+1}^s K_{s-1}(\tilde{C}_0 N x_{n+1}) + R_N^2(x, x_{n+1})
 \end{pmatrix},
\end{align*}
with $R_N^1, R_N^2$ error terms having
exactly the same asymptotic behaviour as in \eqref{eq:RN1RN2}. 

\subsubsection{Error estimates and proof of Proposition \ref{prop:NtD}}
Similarly as in the Dirichlet-to-Neumann case, we conclude the proof in the Neumann-to-Dirichlet setting by quantifying how well the function $\tilde{u}_N$ approximates the true solution $\tilde{w}_N$ by invoking the asymptotic behaviour of $\tilde{u}_N$.

\begin{proof}[Proof of Theorem \ref{prop:NtD}]
Building on the expansion from Proposition \ref{prop:expansion_NtD}, similarly as in the previous section, we proceed to deduce the leading order asymptotic expansion of the symbol of $\tilde{\Lambda}_{\gamma}$. The arguments are largely in parallel to the ones above, with some changes which will be highlighted below.
By definition of the Neumann-to-Dirichlet map, we have 
\[\begin{split}
\inp{\tilde{\Lambda}_\gamma f_N,f_N}_{H^s(\R^n),H^{-s}(\R^n)}&=\int_{\R^n}f_N(x)\tilde{w}_N(x,0)dx\\
&=\int_{\R^n}c(x)x_{n+1}^{1-2s}\p_{n+1}(\zeta_N\tilde{u}_N)\tilde{w}_N(x,0)dx.\\
\end{split} \]
Then, integrating by parts, 
\[ \begin{split}
	\inp{\tilde{\Lambda}_\gamma f_N,f_N}_{H^s(\R^n),H^{-s}(\R^n)}&=\int_{\R^{n+1}_+}x_{n+1}^{1-2s}\tilde{\gamma}\nabla (\zeta_N\tilde{u}_N)\cdot \nabla\tilde{w}_Ndxdx_{n+1}\\
	& \quad +\int_{\R^{n+1}_+}\nabla\cdot(\tilde{\gamma}\nabla(\zeta_N\tilde{u}_N))\tilde{w}_Ndxdx_{n+1}\\
	&=\int_{\R^{n+1}_+}x_{n+1}^{1-2s}\tilde{\gamma}\nabla (\zeta_N\tilde{u}_N)\cdot \nabla\tilde{u}_N dxdx_{n+1}\\
	& \quad +\int_{D_N\times \R_+}x_{n+1}^{1-2s}\tilde{\gamma}\nabla (\zeta_N\tilde{u}_N)\cdot \nabla r_Ndxdx_{n+1}\\
	&\qquad+\int_{D_N\times \R_+}\nabla\cdot(\tilde{\gamma}\nabla(\zeta_N\tilde{u}_N))\tilde{w}_Ndxdx_{n+1}\\
	&=:\widetilde{I}+\widetilde{II}+\widetilde{III},
\end{split}\]
where we recall that the approximate solution $ \tilde{u}_N $ is supported in $ D_N $ in the tangential direction. Then we see that, the term $ \widetilde{I} $ which eventually leads to boundary reconstruction  can be estimated in exactly the same manner as for the term $ I $ in the Step 3 of the proof of Theorem \ref{thm:bdry_reconstr_ext_nonloc_sol}, up to a different power in $ N $ due to the renormalization introduced in \eqref{eq:NtD_approx_sol}. Consequently, we have 
\[\lim_{N\to \infty}N^{2s+n/2}\widetilde{I}=\hat{c}_s^{-2}(c_1+c_2)(\bar{C}_\alpha(\gamma)(0))^{-2s} .\]
It remains to estimate the error terms. Let us start with $ \widetilde{II} $.  
By integrating by parts and using the equation for $r_N$,
\[
\widetilde{II}=\int_{D_N}\int_{0}^{\infty}(\nabla\cdot (x_{n+1}^{1-2s}\tilde{\gamma}\nabla \tilde{u}_N)) (\zeta_N\ol{\tilde{u}_N}) + \int_{\Gamma_1\cup\Gamma_2}x_{n+1}^{1-2s}\tilde{\gamma}\nabla r_N\cdot\nu \zeta_N\ol{\tilde{u}_N}=:A+B,
\]
where 
we have used $ \zeta_N(t)=0 $ if $ t>1/\sqrt{N} $ and we have set $ \Gamma_1:=\{|x|=1/\sqrt{N},\:0\leq x_{n+1} \} $ and $ \Gamma_2:=\{|x|\leq 1/\sqrt{N},\: x_{n+1}=0 \} $.
To estimate $ A $, recalling that by Proposition \ref{prop:expansion_NtD} (c),  $ \tilde{u}_N $ is an approximate solution in the sense that $ \nabla\cdot x_{n+1}^{1-2s}\tilde{\gamma}\nabla \tilde{u}_N=o(N^{1-k})z_{n+1}^{1-2s} $ as $ N\to\infty $, we obtain 
\[
\begin{split}
A&=N^{-1-\frac{n}{2}}\int_{|z|<1}\int_{0}^{\infty}o(N^{1-k})z_{n+1}^{1-2s}\left|\zeta\left(\frac{z_{n+1}}{\sqrt{N}}\right)\tilde{u}_N(z,z_{n+1})\right|dzdz_{n+1}\\
&\leq o(N^{-\frac{n}{2}-k})\int_{|z|<1}\int_{0}^{\infty}z_{n+1}^{1-2s}|\tilde{u}_N(z,z_{n+1})|dzdz_{n+1}.
\end{split}
 \]
Now note that $ \zeta(t)\leq 1 $ for any $ t\in[0,\infty) $, and that by Proposition \ref{prop:expansion_NtD} and the normalization \eqref{eq:NtD_approx_sol}, it holds that uniformly in $ z $,
\[\begin{split}
|\tilde{u}_N(z,z_{n+1})|&=N^{-2s} O(1)\text{ as }z_{n+1}\to 0,\\
|\tilde{u}_N(z,z_{n+1})|&=N^{-2s}O((C_0z_{n+1})^{s-\frac{1}{2}}e^{-C_0z_{n+1}}) \text{ as }z_{n+1}\to \infty,
\end{split} \]
where the implicit constant above is independent of $ N $. It then follows that $A=o(N^{-\frac{n}{2}-k-2s}) $.

Thus, when $ k\geq 0 $, we obtain
\[\lim_{N \rightarrow \infty}N^{n/2+2s}A=0. \]
We claim that $ B=0 $. Indeed, the integral in $ B $ over $ \Gamma_1 $ is zero since by construction $ \tilde{u}_N $ is supported on $ \{|x|<\frac{1}{\sqrt{N}} \} $, and the integral in $ B $ over $ \Gamma_2 $ is also zero as $ r_N $ has vanishing Neumann condition. In conclusion, we showed that $ \widetilde{II} $ is of order $ o(N^{-\frac{n}{2}-2s}) $.

Before we proceed to the estimate of $ \widetilde{III} $, we first establish a Poincar\'e type estimate. 
This follows from the fundamental theorem of calculus and the fact that the weight is a Muckenhoupt weight. More precisely, we have for a solution $u$ of \eqref{eq:main_one1} defined on $ \R^{n}\times [1,\frac{1}{\sqrt{N}}] $, for any $\epsilon>0$ and $ x_{n+1}\in [\epsilon,\frac{1}{\sqrt{N}}] $, by the fundamental theorem of calculus,
\begin{align*}
|u(x,x_{n+1})| ^2 \leq C\left( |u(x,\epsilon)|^2 + \left( \int\limits_{\epsilon}^{x_{n+1}} t^{\frac{2s-1}{2}} t^{\frac{1-2s}{2}} |\p_t u(x,t)|dt \right)^2 \right).
\end{align*}
We note that by the regularity of $\gamma$ for $\epsilon>0$ the solution $u$ is $C^2$ regular and, hence, the fundamental theorem is applicable.
By Hölder's inequality applied to the last term,  this gives rise to
\begin{align*}
|u(x,x_{n+1})| ^2 \leq C\left( |u(x,\epsilon)|^2 + x_{n+1}^{2s} \int\limits_{\epsilon}^{x_{n+1}}  t^{1-2s} |\p_t u(x,t)|^2dt  \right).
\end{align*}
Now multiplying  the above by $x_{n+1}^{1-2s}$ and integrating over $D_N \times [\epsilon,\frac{1}{\sqrt{N}}]$ then implies that
\begin{align*}
\int\limits_{D_N \times [\epsilon,\frac{1}{\sqrt{N}}]} x_{n+1}^{1-2s} |u(x,x_{n+1})| ^2 dx dx_{n+1}
& \leq CN^{s-1} \|u(x,\epsilon)\|_{L^2(D_N)}^2 \\
& \qquad  + CN^{-1} \int\limits_{D_N \times [\epsilon,\frac{1}{\sqrt{N}}]}  t^{1-2s} |\p_t u(x,t)|^2dt dx  .
\end{align*}
If moreover $ u\in\dot{H}^1(\R^{n+1}_+,x_{n+1}^{1-2s})   $, then by the trace estimate and Hölder's inequality, it holds that $ \|u(x,\epsilon)\|_{L^2(D_N)}^2\leq N^{-s}\|u(x,\epsilon)\|_{L^{\frac{2n}{n-2s}}(D_N)}^2  $. Therefore, letting $ \epsilon\to 0 $ and recalling the boundary trace estimate \eqref{eq:trace}, we are led to 

\begin{align}\label{eq:weightedL2}
\begin{split}
\int\limits_{D_N \times [0,\frac{1}{\sqrt{N}}]} x_{n+1}^{1-2s} |u(x,x_{n+1})| ^2 dx dx_{n+1}
&\leq CN^{-1}\|x_{n+1}^{1-2s}\nabla u\|_{L^2(\R^{n+1}_+)}^2 + CN^{-1}\|u(\cdot,0)\|_{L^{\frac{2n}{n-2s}}(\R^n)}^2\\
&\leq   CN^{-1}\|x_{n+1}^{1-2s}\nabla u\|_{L^2(\R^{n+1}_+)}^2 .	
\end{split}
\end{align}
With this in hand, to estimate $ \widetilde{III} $, we split it into two parts,
\[\begin{split}
\widetilde{III}&=\int_{D_N}\int_{0}^{\frac{1}{2\sqrt{N}}}\nabla\cdot (x_{n+1}^{1-2s}\tilde{\gamma}\nabla\tilde{u}_N) \tilde{w}_N
dxdx_{n+1}
+\int_{D_N}\int^{\frac{1}{\sqrt{N}}}_{\frac{1}{2\sqrt{N}}}\nabla\cdot\big( x_{n+1}^{1-2s}\tilde{\gamma}\nabla(\zeta_N\tilde{u}_N)\big) \tilde{w}_N\\
&=:\widetilde{III}_1+\widetilde{III}_2.
\end{split}
\]
For $ \widetilde{III}_1$, we use the fact that $ \tilde{u}_N $ is an approximate solution as stated in Proposition \ref{prop:expansion_NtD} so that 
\[\begin{split}
|\widetilde{III}_1|&= o(N^{-2s+2-k})\int_{D_N}\left|\int_{0}^{\frac{1}{2\sqrt{N}}}x_{n+1}^{1-2s}\tilde{w}_N \right|\\
&=o(N^{-2s+2-k})\left(\int_{D_N}\int_{0}^{\frac{1}{2\sqrt{N}}}x_{n+1}^{1-2s} \right)^{1/2}\left(\int_{D_N}\int_{0}^{\frac{1}{2\sqrt{N}}}x_{n+1}^{1-2s}|w(x,x_{n+1})|^2dxdx_{n+1} \right)^{1/2}\\
&=o(N^{-2s+2-k})(N^{-\frac{n}{2}+s-1})^{1/2} N^{-\frac{1}{2}}\|x_{n+1}^{\frac{1-2s}{2}}\nabla\tilde{w}_N\|_{L^2(\R^{n+1}_+)}\\
&=o(N^{-2s+2-k})(N^{-\frac{n}{2}+s-2})^{1/2}\|f_N\|_{\dot{H}^{-s}(\R^n)},
\end{split}\]
where in the second last line we applied the weighted $ L^2 $ estimate \eqref{eq:weightedL2}, and for the last line the energy estimate.
We compute that 
\[\|f_N\|_{\dot{H}^{-s}(\R^n)}\leq C\|xf_N\|_{H^1(\R^n)}+\|f_N\|_{L^2(B_1(0))}\leq CN^{-\frac{n}{4}+\frac{1}{2}}. \]
Therefore, we have
\[|\widetilde{III}_1|=o(N^{-\frac{n}{2}-2s+\frac{s-1}{2}+2-k}), \]
which implies that when $ k\geq 2 $, $ \lim_{N\to\infty}N^{n/2+2s}\widetilde{III}_1=0 $. We now turn to $ \widetilde{III}_2 $. By integration by parts, 
\[\begin{split}
\widetilde{III}_2&=\int_{D_N}(\sqrt{N}/2)^{2s-1}c(x)\p_{n+1}\tilde{u}_N(x,\frac{1}{2\sqrt{N}})\tilde{w}_N(x,\frac{1}{2\sqrt{N}})dx-\int_{D_N}\int_{\frac{1}{2\sqrt{N}}}^{\frac{1}{\sqrt{N}}}x_{n+1}^{1-2s}\tilde{\gamma}\nabla(\zeta_N\tilde{u}_N)\cdot\nabla\tilde{w}_N\\
&=:\widetilde{III}_3+\widetilde{III}_4.
\end{split} \]
For $ \widetilde{III}_4 $, we make use of the exponential decay of $ \nabla \tilde{u}_N $ and $ \tilde{u}_N $ to conclude that
\[\begin{split}
|\widetilde{III}_4|&\leq\|x_{n+1}^{1-2s}\nabla\tilde{w}_N\|_{L^2(\R^{n+1}_+)}\left( \int_{D_N}\int_{\frac{1}{2\sqrt{N}}}^{\infty}|\nabla\zeta_N\tilde{u}_N+\zeta_N\nabla\tilde{u}_N|^2 \right)^{1/2}\\
&=O(N^{-\frac{n}{4}+\frac{1}{2}})O(e^{-\frac{C_0\sqrt{N}}{4}})=o(N^{-n/2-2s}).
\end{split} \]
Similarly, for the boundary term $ \widetilde{III}_3 $ we check that from \eqref{eq:approxsolgradient}, \eqref{eq:RN1RN2} and \eqref{eq:NtD_approx_sol} that as $ N\to \infty $ uniformly for $ x\in B_1(0) $ we have
\[\begin{split}
|\p_{n+1}\tilde{u}_N(x,\frac{1}{2\sqrt{N}})|&\leq N^{-2s}\left( C_0N^{\frac{s}{2}+1}K_{s-1}(C_0\sqrt{N}/2)+R_N^2(x,\frac{1}{2\sqrt{N}}) \right)=O(e^{-\frac{C_0\sqrt{N}}{4}}),
\end{split}
 \]
 where we have used the exponential decay of the Bessel function $ K_{s-1} $ and the reminder term $ R_N^2 $. With the above and Cauchy-Schwarz, we see that 
\[\begin{split}
|\widetilde{III}_3|&\leq CN^{s-1/2}\left(\int_{D_N}|\p_{n+1}\tilde{u}_N(x,\frac{1}{2\sqrt{N}})|^2dx \right)^{1/2}\left(\int_{D_N}|\tilde{w}_N(x,\frac{1}{2\sqrt{N}})|^2 dx\right)^{1/2}\\
&=O(e^{-\frac{C_0\sqrt{N}}{5}})\left(\int_{D_N}\int_{\frac{1}{2\sqrt{N}}}^{\frac{1}{\sqrt{N}}} |\nabla\tilde{w}_N(x,x_{n+1})|^2+|\tilde{w}_N(x,x_{n+1})|^2dxdx_{n+1}\right)^{1/2}\\
&=O(e^{-\frac{C_0\sqrt{N}}{5}})\left(2\sqrt{N}\int_{D_N}\int_{\frac{1}{2\sqrt{N}}}^{\frac{1}{\sqrt{N}}} x_{n+1}^{1-2s}\left(|\nabla\tilde{w}_N(x,x_{n+1})|^2+|\tilde{w}_N(x,x_{n+1})|^2\right)dxdx_{n+1}\right)^{1/2}\\
&=O(e^{-\frac{C_0\sqrt{N}}{5}})\left(2\sqrt{N}(1+CN^{-1})\|x_{n+1}^{\frac{1-2s}{2}}\nabla\tilde{w}_N\|^2_{L^2(\R^{n+1}_+)} \right)^{1/2}=O(e^{-\frac{C_0\sqrt{N}}{6}})
\end{split} \] 
where in the second line above we also used the usual trace estimate for $ \tilde{w}_N(\cdot,\frac{1}{2\sqrt{N}}) $, and for the second last line we used again the estimate \eqref{eq:weightedL2}. Combining all the estimates obtained above, we infer that $ \lim_{N\to\infty}N^{n/2+2s}\widetilde{III}=0 $, which concludes the proof.
\end{proof}

\begin{rmk}
We comment on the $ C^4 $ regularity assumptions for the metric $ \gamma$. In the construction of the approximate solutions, this assumption allows us to expand $ \gamma $ (with Taylor expansion) around the point $ x_0 $ where we perform boundary reconstruction, which eventually leads to the hierarchy of ODEs in the construction of the approximate solutions, see Step 1 of proof of Proposition \ref{prop:DtN_approx}.
For the error estimate for the approximate solution in both Dirichlet-to-Neumann and Neumann-to-Dirichlet cases, the assumption that  $ \gamma\in C^{2k} $ for some $ k\geq 1 $ allows us to deduce that the error terms are of lower order, see the estimate of $ A $ in the proof of Theorem \ref{thm:Eucl} and of $ \widetilde{III}_1 $ in the proof of Theorem \ref{prop:NtD}. 
While we believe that these conditions can be reduced, we do not pursue this here.
\end{rmk}

\subsection{The geometric source-to-solution problem}
\label{sec:mfd2}

Last but not least, we consider the geometric source-to-solution problem which, after extension, in local coordinates takes a very similar form to the problem discussed in the previous section. More precisely, the proofs of the geometric versions formulated in Theorems \ref{thm:bdry_reconstr_ext_nonloc_source} and \ref{thm:bdry_reconstr_ext} follow as the ones in Section \ref{sec:mfd} by adopting an extension perspective.

Let us first recall the set-up. As in the Dirichlet case, it is possible to adopt an extension perspective on the source-to-solution problem. More precisely, considering \eqref{eq:Neumann_Dirichlet_main}, then we have that $\bar{L}_{s,O}(f) = \tilde{u}^f|_{O \times \{0\}}$. As above, also in the manifold setting a compatibility condition has to be satisfied by the data to ensure unique solvability of the problem: in the geometric context it turns into $\int\limits_{M} \frac{1}{\sqrt{\det(g)}} f dV_g = 0$. Since we will construct our test data $f_N$ to be supported in a sufficiently small patch around $x_0 \in O$,
this can be achieved by a variant of Lemma \ref{lem:cut-off}. We present the central steps in what follows next.

\begin{proof}[Proof of Theorems \ref{thm:bdry_reconstr_ext_nonloc_source} and \ref{thm:bdry_reconstr_ext}]
As in the solution-to-source setting the proofs of Theorems \ref{thm:bdry_reconstr_ext_nonloc_source} and \ref{thm:bdry_reconstr_ext} will be obtained simultaneously. As in the solution-to-source setting,  the proof consists of noting that the testing boundary data and the approximate solution are localized, thus computing the duality paring $ \langle f_N, \frac{1}{\sqrt{\det(g)}} \bar{L}_{s,O}(f_N) \rangle_{H^{s}(M,dV_g), H^{-s}(M, dV_g)} $ in local coordinates reduces the problem to the Euclidean case considered above.\\

\emph{Step 1: Construction of approximate solutions and solvability.} We begin by outlining the main differences  compared to Proposition \ref{prop:expansion_NtD} in the construction of approximate solutions and the associated solvability questions. As in Proposition \ref{prop:expansion_NtD}, we consider highly oscillatory and localized data $ (f_N)_{N \in \N} $. Hence, we may work in a single coordinate patch. Assuming, without loss of generality, that $x_0 = 0$, after passing to local coordinates, the compatibility condition for solvability for the equation 
\begin{align}
\begin{split}
\label{eq:geo_NtD}
(\p_{n+1} x_{n+1}^{1-2s} \p_{n+1} + x_{n+1}^{1-2s} \D_g) \tilde{u}^f & = 0 \mbox{ on } M \times \R_+,\\
\sqrt{\det(g)} \lim\limits_{x_{n+1} \rightarrow 0} x_{n+1}^{1-2s} \p_{n+1} \tilde{u}^f & = f \mbox{ on } M,
\end{split}
\end{align}
 turns into
\begin{align*}
\int\limits_{B_r(0)} f_N(x) dx = 0,
\end{align*}
for some suitable radius $r>0$. Without loss of generality, in what follows, we will assume that $r=1$.
This brings us back to the compatibility condition from Lemma \ref{lem:cut-off} which ensures the existence of a (known) sequence $(N_{k,\alpha})_{k \in \N}$ for any $\alpha \in \mathbb{S}^{n-1}$.
In particular, an exact solution $\tilde{w}_{N_{k,\alpha}}$ to \eqref{eq:geo_NtD} can be constructed for such generalized Neumann data $f_{N_{k,\alpha}}$. As before, we will drop the indeces in $ N_{k,\alpha} $ below.

Now, let $ c(x)=\sqrt{\det g(x)} $ and we define (in local coordinates)
\[\tilde{\gamma}(x,x_{n+1})=\sqrt{\det g(x)}\begin{pmatrix}
	g^{-1}(x) & 0\\
	0 & 1
\end{pmatrix}, \]
and then by Proposition \ref{prop:expansion_NtD}, in this coordinate patch, we obtain the approximate solutions $\tilde{u}_N$ with Neumann boundary data being $ f_N $. 

\emph{Step 2: The generalized Alessandrini identity.} With existence of approximate solutions at our disposal, we turn towards the Alessandrini identity in the geometric context as this is the basis for the reconstruction result. First note that, comparing the extension result in \eqref{eq:geo_NtD} and the usual setting in the local case \eqref{eq:main_one1}, we have that 
\[\bar{L}_{s,O}\left( f_N\right)=c_s^{-1}\tilde{w}_N(\cdot,0).  \]
Thus,   as in the proof of Theorems  \ref{thm:bdry_reconstr_ext_nonloc_sol} and \ref{thm:bdry_reconstr_ext_NtD}, we observe that by integration by parts
 \begin{align*}
&\langle f_N, \frac{1}{\sqrt{\det(g)}} \bar{L}_{s,O}(f_N) \rangle_{H^{s}(M,dV_g), H^{-s}(M, dV_g)}\\
&=c_s^{-1}\int\limits_{M}\tilde{w}_N(x)c(x)x_{n+1}^{1-2s}\partial_{n+1}(\tilde{u}_N\zeta_N) \frac{1}{\sqrt{\det(g)}} dV_g  \\
&=c_s^{-1} \int_{B_1(0)\times\R_+} x_{n+1}^{1-2s}\nabla w_N\cdot\tilde{\gamma}\nabla(\tilde{u}_N\zeta_N)dxdx_{n+1}+c_s^{-1}\int_{B_1(0)\times\R_+}\nabla\cdot(\tilde{\gamma}\nabla(\zeta_N\tilde{u}_N))\tilde{w}_Ndxdx_{n+1} .
\end{align*}
Now arguing as in the proof of Theorem \ref{thm:bdry_reconstr_ext_nonloc_sol}, working in a single coordinate patch,  exactly the same proof as in the Euclidean case (that is, the proof of Theorem \ref{prop:NtD})  leads to the correct error bound and boundary reconstruction.
\end{proof}
\begin{proof}[Proof of Corollary \ref{cor:metric}]
We note that also in the solution-to-source measurement setting, all quantities in 
	\begin{align*}
		\langle f_N, \frac{1}{\sqrt{\det(g)}} \bar{L}_{s,O}(f_N) \rangle_{H^{s}(M,dV_g), H^{-s}(M, dV_g)}
	\end{align*} 
	are available from the measurement data.
\end{proof}

\section{Proof of the Applications from Theorems \ref{thm:FGKU_nonlocal}, \ref{thm:FGKU_nonlocal1a}  and \ref{thm:local_nonlocal1}}
\label{sec:consequences}

In this last section, we discuss applications of the above theorems. We begin by discussing the proof of Theorem \ref{thm:FGKU_nonlocal}. 

\begin{proof}[Proof of Theorem \ref{thm:FGKU_nonlocal}]
The proof of Theorem \ref{thm:FGKU21} follows from an application of Theorem \ref{thm:bdry_reconstr_ext_nonloc_source} or Theorem \ref{thm:bdry_reconstr_ext} and the result of \cite{FGKU21}. Indeed, by Theorem \ref{thm:bdry_reconstr_ext_nonloc_source} and, equivalently, Theorem \ref{thm:bdry_reconstr_ext} we have that if $ \bar{L}_{s,O}^1 =  \bar{L}_{s,O}^2$ are known, then the full metrics $g_1, g_2$ can be reconstruced in $O$ and it holds $g_1|_{O} = g_2|_{O}$. With this, the assumption of the main theorem in \cite{FGKU21} are satisfied and the remainder of Theorem \ref{thm:FGKU_nonlocal} then follows from Theorem 1 in \cite{FGKU21}.
\end{proof}

The proof of Theorem \ref{thm:FGKU_nonlocal1a} follows analogously. We, hence, omit its detailed discussion and turn to the proof of Theorem  \ref{thm:FGKU_nonlocal1a}. 

\begin{proof}[Proof of Theorem \ref{thm:local_nonlocal1}]
Also here, by Theorems \ref{thm:bdry_reconstr_ext_nonloc_source} and Theorem \ref{thm:bdry_reconstr_ext}, we have that the knowledge of $\bar{L}_{s,O}^1 =  \bar{L}_{s,O}^2$ allows us to reconstruct $g_1, g_2$ in $O$. This implies that again $g_1|_{O} = g_2|_{O}$. With the assumption of the main theorem in \cite{R23} are satisfied and the remainder of Theorem \ref{thm:local_nonlocal1} then follows from \cite{R23}.
\end{proof}

\section*{Acknowledgements}
This is an extension of the first author's master's thesis written at the University of Bonn. A.R.~gratefully acknowledges funding from the Deutsche Forschungsgemeinschaft (DFG, German Research Foundation) through the Hausdorff Center for Mathematics under Germany's Excellence Strategy - EXC-2047/1 - 390685813.

\bibliographystyle{alpha}
\bibliography{citations}

\end{document}